\documentclass{article}
\usepackage{graphicx}%
\usepackage{multirow}%
\usepackage{amsmath,amssymb,amsfonts}%
\usepackage{amsthm}%
\usepackage{mathrsfs}%
\usepackage[title]{appendix}%
\usepackage{xcolor}%
\usepackage{textcomp}%
\usepackage{manyfoot}%
\usepackage{booktabs}%
\usepackage{algorithm}%
\usepackage{algorithmicx}%
\usepackage{algpseudocode}%
\usepackage{listings}%
\usepackage{color}
\usepackage{enumerate}
\usepackage{hyperref}
\usepackage{times}

\setlength{\oddsidemargin}{0.25in}
\setlength{\textwidth}{6in}
\setlength{\topmargin}{-0.25in}
\setlength{\textheight}{8in}

\def\RR{{\mathbb R}}

\def\cT{\mathcal{T}}

\newtheorem{theorem}{Theorem}[section]
\newtheorem{lemma}[theorem]{Lemma}
\newtheorem{proposition}[theorem]{Proposition}
\newtheorem{corollary}[theorem]{Corollary}

 \theoremstyle{definition}
\newtheorem{definition}[theorem]{Definition}
\newtheorem{example}[theorem]{Example}

\newtheorem{question}[theorem]{Question}

\theoremstyle{remark}

\numberwithin{equation}{section}

\begin{document}

\title
{Boundary estimates for a fully nonlinear Yamabe problem on Riemannian manifolds}

\author{Weisong Dong\footnote{School of Mathematics, Tianjin University, 135 Yaguan Road, Tianjin, 300354, China.~Email: dr.dong@tju.edu.cn}~, Yanyan Li\footnote{Department of Mathematics, Rutgers University, 110 Frelinghuysen Rd., Piscataway, NJ 08854, USA.~Email: yyli@math.rutgers.edu}~ and Luc Nguyen\footnote{Mathematical Institute and St Edmund Hall, University of Oxford, Andrew Wiles Building, Radcliffe Observatory Quarter, Woodstock Road, 
	    Oxford OX2 6GG, UK.~Email: luc.nguyen@maths.ox.ac.uk}}

\date{}
\maketitle

\begin{abstract}
In this paper, we consider the Dirichlet boundary value problem for fully
nonlinear Yamabe equations on Riemannian manifolds with boundary. Assuming
the existence of a subsolution, we derive \emph{a priori} boundary second derivative estimates and consequently obtain the existence of a smooth solution. Moreover, with respect to a family of equations interpolating the fully nonlinear Yamabe equation and the classical semi-linear Yamabe equation, our estimates remain uniform. Finally, an example of a $C^1$ solution which is smooth in the interior but not smooth at the boundary is also given.

\medskip
{\bf Keywords.} Fully nonlinear Yamabe problem, Boundary estimates, Dirichlet problem.

\medskip
{\bf Mathematical Subject Classification (2020).} 35J60; 35B45; 53C18; 53C21.
\end{abstract}

\setcounter{tocdepth}{1}
\tableofcontents

\section{Introduction}

Let $(M^n, g)$ be a compact smooth Riemannian manifold of dimension $n \geq 3$ with non-empty smooth boundary $\partial M \neq \emptyset$.
The Schouten tensor of $g$ is defined as
\[
A_g := \frac{1}{n - 2} \Big(\mathrm{Ric}_g - \frac{1}{2 (n - 1)} \mathrm{R}_g  g\Big) ,
\]
where $\mathrm{Ric}_g$ and $\mathrm{R}_g$ are respectively the Ricci tensor and the scalar curvature of
the metric $g$. The Schouten tensor plays an important role in conformal geometry due to its appearance in the Ricci decomposition of the Riemann curvature tensor.

Consider the problem of finding on $M$ a metric $\tilde g$ conformal to $g$ with a prescribed
symmetric function of the eigenvalues of the Schouten tensor $A_{\tilde g}$ and a prescibed Dirichlet boundary data $\tilde g|_{\partial M}$. More precisely, given a function $\psi > 0$ defined on $M$ and a
Riemannian metric $h$ on $\partial M$ which is conformal to $g|_{\partial M}$, the induced metric on $\partial M$,
one looks for a metric $\tilde g$ on $M$ conformal to $g$ such that $\tilde g |_{\partial M} = h$ and the Schouten tensor $A_{\tilde g}$ satisfies
\begin{equation}
\label{conformal-a}
f(\lambda_{\tilde g} (A_{\tilde g}) ) = \psi (x)\; \mbox{in} \; M,
\end{equation}
where $\lambda_{\tilde g} (A_{\tilde g}) = (\lambda_1, \ldots, \lambda_n)$ are the eigenvalues of $A_{\tilde g}$ with respect to $\tilde g$ and the symmetric function $f$ shall be defined later. Note that, under a conformal deformation of metrics $\tilde g = e^{-2u} g$, the Schouten tensor transforms according to 
the formula
\[
A_{\tilde g} = \nabla^2 u + du\otimes du - \frac{1}{2} |\nabla u|_g^2 g + A_g,
\]
where $\nabla$ denotes the covariant derivative of $g$. If $h = e^{- 2\varphi } g|_{\partial M}$ for a function $\varphi \in C^2 (\partial M)$ and $f$ is homogeneous of degree one,
the problem is equivalent to finding a function $u$ on $M$
satisfying  
\begin{equation}
\label{conformal-a'}
\left\{
\begin{aligned}
f(\lambda_{g} (A_{\tilde g})) = &\; \psi (x) e^{-2u}\; & \mbox{in} \; M,\\
u = &\; \varphi \; & \mbox{on} \; \partial M, 
\end{aligned}
\right.
\end{equation}
where $\lambda_{g} (A_{\tilde g})= (\lambda_1, \ldots, \lambda_n)$ are the eigenvalues of $A_{\tilde g}$ with respect to $g$.

The analogous problem on manifolds
without boundary has attracted
much interest,
see e.g. \cite{CGY1, CGY2, CLL-Liouville, FW19, GW, GW2003a, GW2003, GS18, GV2007, HLM, LL03, LL05, LiN1, STW, V2000} and references therein.
On manifolds with boundary, a closely related problem to the above question is to find a conformal metric such that its Schouten tensor
satisfies \eqref{conformal-a} and the mean curvature of the boundary with respect to the new metric is a prescribed function.
This is equivalent to solving \eqref{conformal-a} with a (nonlinear) Neumann boundary condition and there is also a rich literature,
see \cite{ChenSophie, CW23, CLL-bdry1, FT18, JLL, LL06, LiN-2012, LiN2}  and references therein. The above mentioned work are known as fully nonlinear Yamabe problems of positive curvature type. The counterpart for negative curvature type has also been studied intensively  -- see \cite{CHY, DN25, DN25a, DN25b, GLN, Guan08, GSW, GV2003, LiN-2021, LNX, LiSheng, Sui, YWang21, Wu24}.

Let us now give our assumptions on the symmetric function $f$, following Caffarelli, Nirenberg and Spruck \cite{CNS}. 
Let $\Gamma \subset \mathbb{R}^n$ be an open convex symmetric cone with vertex at the origin satisfying
$\Gamma_n \subset \Gamma \subset \Gamma_1$, where $\Gamma_n = \{ \lambda\in \mathbb{R}^n : \lambda_i > 0, \forall 1\leq i \leq n\}$
and $\Gamma_1 = \{\lambda \in \mathbb{R}^n: \sum_{i=1}^n\lambda_i > 0\}$.
Let $f\in C^{\infty } (\Gamma) \cap C^0 (\bar \Gamma)$ be a function which is symmetric in $\lambda_i$
and satisfies
the following structural conditions:
\begin{align}
 &f > 0 \; \mbox{in}\;  \Gamma \; \mbox{and}\; f = 0 \; \mbox{on}\; \partial \Gamma; \tag{f1}
 \label{f1}\\
 &\partial_i f := \frac{\partial f}{\partial \lambda_i} > 0, \; \forall \; 1\leq i\leq n; \tag{f2}
 \label{f2}\\
 &f \; \mbox{is concave in} \;\Gamma; \tag{f3}
 \label{f3}\\
& f \; \mbox{is homogeneous of degree one, i.e. }\;  f(s \lambda) = s f(\lambda), \; \forall\;  s > 0.\tag{f4}
 \label{f4}
\end{align}
We remark that \eqref{f2} ensures that the equation \eqref{conformal-a'} is elliptic and
\eqref{f3} enables us to use Evans-Krylov's Theorem.

Note that, if we define
\[
e = (1, \ldots, 1),
\]
then, by the homogeneity and concavity of $f$,
\begin{equation}
\sum_i \partial_i f (\lambda) = f (\lambda) + D f (\lambda) \cdot (e - \lambda) \geq f (e) > 0 \; \mathrm{ in } \; \Gamma. \tag{f5}
	\label{f5}
\end{equation}
By the symmetry and homogeneity of $f$, we have $D f(e) = \frac{1}{n}f(e) e$. Together with the concavity and once again the homogeneity of $f$, this gives
\begin{equation}
f(\lambda) \leq f\big(\frac{1}{n}\sigma_1(\lambda)e\big) + D f\big(\frac{1}{n}\sigma_1(\lambda)e\big) \cdot (\lambda - \frac{1}{n}\sigma_1(\lambda)e) = \frac{1}{n}\sigma_1(\lambda)f(e).
	\tag{f6}
	\label{f6}
\end{equation}

Classical examples satisfying \eqref{f1}-\eqref{f4} are $\sigma_k^{\frac{1}{k}} (\lambda)$ for $1\leq k \leq n$
and the quotient $\Big( \frac{\sigma_k (\lambda)}{\sigma_\ell (\lambda)}\Big)^{\frac{1}{k-\ell}}$ for $1\leq \ell < k \leq n$
defined on the open convex symmetric cone $\Gamma_k$ (see \cite{CNS}),
where $\sigma_k$ is the $k$-th elementary symmetric function
\[
\sigma_k (\lambda) : = \sum_{1 \leq i_1 <\cdots < i_k \leq n } \lambda_{i_1} \cdots \lambda_{i_k}
\]
and
\[
\Gamma_k : = \{\lambda = (\lambda_1, \ldots, \lambda_n) \in \mathbb{R}^n : \sigma_j (\lambda) > 0, \forall 1 \leq j \leq k\}.
\]
When $f = \sigma_k^{\frac{1}{k}}$ and $\psi = 1$, equation \eqref{conformal-a} is known as the $\sigma_k$-Yamabe problem.

The solvability of \eqref{conformal-a'} has been studied in a number of work, see Schn\"urer \cite{Schnurer}, Guan \cite{Guan07}, Guan and Jiao \cite{GJ}, Li and Nguyen \cite{LiN}. The solvability for related Hessian-type equations has been considered by many authors - we only mention Guan \cite{Guan99, Guan14, Guan18}, Guan, Spruck and Xiao \cite{GSX}, Guan and Zhang \cite{PGZ}, Li \cite{Li90}, Lu \cite{Lu23}, Trudinger \cite{Trudinger}, Wang and Xiao \cite{WX} which are more closely related to the present work and refer the reader to them for further references. In the cited work for \eqref{conformal-a'} as well as in the present work, it is assumed that \eqref{conformal-a'} admits a smooth subsolution. We briefly recall here aspects of \cite{LiN} which are of special relevance. In \cite{LiN}, as an intermediate step in their study of Green’s functions to general nonlinear Yamabe problems, the authors proved that there exists a solution in $C^{0,1} (M) \cap C^\infty (M \setminus \partial M)$ to the Dirichlet problem of \eqref{conformal-a'} (see \cite[Theorem 4.1]{LiN}). The approach in \cite{LiN} was to consider a family of problems generalizing \eqref{conformal-a'}, namely
\begin{equation}
\label{conformal-at}
\left\{
\begin{aligned}
f_t(\lambda_{g} (A_{\tilde g})) = &\; \psi (x) e^{-2u}\; & \mbox{in} \; M,\\
u = &\; \varphi \; & \mbox{on} \; \partial M, 
\end{aligned}
\right.
\end{equation}
where $t \in [0,1]$ and where $f_t$ is defined on the cone
\begin{equation}
\Gamma_t = \big\{\lambda :  t \lambda + (1-t)\sigma_1(\lambda)e \in \Gamma\big\}
	\label{Eq:Gt}
\end{equation}
by
\begin{equation}
f_t(\lambda) = f\big(t \lambda + (1-t)\sigma_1(\lambda)e\big).
	\label{Eq:ft}
\end{equation}
The proof of \cite{LiN} was based on the observation that the proof in \cite{Guan07} can be applied to obtain a solution $u_t \in C^\infty(M)$ of \eqref{conformal-at} for $t \in [0,1)$ and that the family $\{u_t\}_{t \in [0,1)}$ is bounded in $C^1(M) \cap C^\infty_{\rm loc}(M \setminus \partial M)$. Therefore, one obtains  in the limit $t \nearrow 1^-$ a solution of \eqref{conformal-at} for $t = 1$, i.e. \eqref{conformal-a'}, in $C^{0,1} (M) \cap C^\infty (M \setminus \partial M)$. On the other hand, since there are examples of solutions of \eqref{conformal-a'} which belong to $(C^1(M) \cap C^\infty(M \setminus \partial M)) \setminus C^2(M)$ (see Example \ref{Ex1} below), it is natural to ask if the solution of \eqref{conformal-a'} constructed via \eqref{conformal-at} in \cite{LiN} is smooth at $\partial M$ or not. In this work, we prove that it is indeed smooth at $\partial M$. In other words, the solution in \cite[Theorem 4.1]{LiN} belongs to $C^\infty (M)$.

We shall study a Dirichlet problem with a slightly more general right hand side than that in \eqref{conformal-at}. For a function $u\in C^2(M)$, let
\[
W[u] : = A_{e^{-2u}g} = \nabla^2 u + du\otimes du - \frac{1}{2} |\nabla u|^2 g + A_g,
\]
and let $\lambda_g (W[u]) = (\lambda_1, \ldots, \lambda_n)$ denote the eigenvalues of $W[u]$ with respect to the metric $g$. When it is clear from the context, we sometimes write $\lambda(W[u])$ in place of $\lambda_g (W[u])$.  Given $0 < \psi (x, z) \in C^{\infty} (M \times \mathbb{R})$ with $\psi_z \leq 0$ and $\varphi \in C^{\infty} (\partial M)$,
consider the Dirichlet problem
\begin{equation}
\label{eqn}
\left\{
\begin{aligned}
f_t(\lambda_g (W[u])) = &\; \psi (x,u),\; & \mbox{in}\; M,\\
u = &\; \varphi,\;  & \mbox{on}\; \partial M.
\end{aligned}
\right.
\end{equation}
Note that, by \eqref{f3}, \eqref{f4} and \eqref{f6},
\[
f_t(\lambda) \geq t f(\lambda) + (1-t) \sigma_1(\lambda) f(e) \geq (t + n(1-t))f(\lambda) \geq f(\lambda).
\]
We therefore say $\underline{u} \in C^{2} (M)$ is a \emph{subsolution} to \eqref{eqn} for all $t \in [0,1]$ if $\underline{u}$ satisfies that
\[
	\left\{
	\begin{aligned}
		f(\lambda_g (W[\underline{u}])) \geq &\; \psi (x, \underline{u}),\; & \mbox{in}\; M,\\
		\underline{u} = &\; \varphi,\;  & \mbox{on}\; \partial M.
	\end{aligned}
	\right.
\]

The following is our main result.

\begin{theorem}
\label{thm}
Let $(M^n, g)$ be a compact smooth Riemannian manifold of dimension $n \geq 3$ with non-empty smooth boundary $\partial M \neq \emptyset$. Let $\Gamma \subset \mathbb{R}^n$ be an open convex symmetric cone with vertex at the origin satisfying
$\Gamma_n \subset \Gamma \subset \Gamma_1$ and $f\in C^{\infty } (\Gamma) \cap C^0 (\bar \Gamma)$ be a symmetric function
satisfying \eqref{f1}-\eqref{f4}. Let $\psi (x, z) \in C^{\infty} (M \times \mathbb{R})$ be positive with $\psi_z \leq 0$, and suppose that there exists a subsolution $\underline{u} \in C^4 (M)$ to the Dirichlet problem \eqref{eqn} for all $t \in [0,1]$.
Then, for every $t \in [0,1]$ there exists a solution $u_t \in C^\infty (M)$ to the Dirichlet problem \eqref{eqn} satisfying $u_t \geq \underline{u}$.
Moreover,
we have the estimate
\[
\sup_{t \in [0,1]} \sup_{M}  (|u_t| + |\nabla u_t| + |\nabla^2 u_t| )\leq C,
\]
where $C$ only depends on $\underline{u}$, $(M, g)$, $(f, \Gamma)$, $\psi$ and $\varphi$.
\end{theorem}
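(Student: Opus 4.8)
The plan is to prove the stated \emph{a priori} bound $\sup_{t\in[0,1]}\sup_M(|u_t|+|\nabla u_t|+|\nabla^2 u_t|)\le C$ first, and then to derive existence and $C^\infty$ regularity up to $\partial M$ by the standard machinery: the method of continuity to produce each $u_t$, the Evans--Krylov theorem for interior $C^{2,\alpha}$, Krylov's boundary estimate for $C^{2,\alpha}$ up to $\partial M$, and Schauder bootstrapping for $C^\infty$. Two structural facts drive the uniformity in $t$. First, $\Gamma\subseteq\Gamma_t\subseteq\Gamma_1$ for every $t\in[0,1]$: since $\Gamma$ is a symmetric convex cone containing $\Gamma_n$, $t\lambda+(1-t)\sigma_1(\lambda)e\in\Gamma$ whenever $\lambda\in\Gamma$, while $L_t\lambda:=t\lambda+(1-t)\sigma_1(\lambda)e$ satisfies $\sigma_1(L_t\lambda)=(t+n(1-t))\sigma_1(\lambda)$, so the domain of the equation neither degenerates nor escapes $\Gamma_1$. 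Second, $f_t=f\circ L_t$ with $L_t$ linear, so $f_t$ is concave, positively $1$-homogeneous and elliptic with constants inherited from $f$, and $f_t\ge f$ on $\Gamma$ as recorded in the excerpt. Because $\underline u$ is a subsolution for \emph{all} $t$, the eigenvalues $\lambda(W[\underline u])$ stay in a single \emph{fixed} compact subset of $\Gamma$ on which $f(\lambda(W[\underline u]))\ge\psi(\cdot,\underline u)\ge\min_M\psi>0$; this set witnesses all ``good-direction'' barrier inequalities uniformly in $t$ --- the mechanism throughout being that concavity of $f_t$, together with $\psi_z\le0$ and $\underline u\le u_t$, makes $a_t^{ij}(W[\underline u]-W[u_t])_{ij}$ a nonnegative quantity (here $a_t^{ij}:=\partial f_t/\partial W_{ij}$), which can absorb the possibly unbounded trace $\mathcal F_t:=\sum_i a_t^{ii}$.

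\textbf{Step 1 ($C^0$ and $C^1$).} The lower bound $u_t\ge\underline u$ is the comparison principle. The $C^0$ upper bound follows from a maximum-principle/barrier argument using $\psi>0$ and $\psi_z\le0$ (in the model case $\psi(x,u)=\psi(x)e^{-2u}$ simply from the interior maximum principle, since $\psi(x,u)\to0$ as $u\to\infty$). On $\partial M$, tangential derivatives are fixed by $u=\varphi$; the normal derivative is controlled on both sides, namely by $\underline u$ as a lower barrier and, as an upper barrier, by the linear Poisson problem associated with $\Delta u_t\ge -C$ (valid because $\lambda(W[u_t])\in\Gamma_1$ forces $\sigma_1(W[u_t])>0$, hence $\Delta u_t>\tfrac{n-2}{2}|\nabla u_t|^2-\mathrm{tr}_g A_g$), solved on a collar of $\partial M$ using the $C^0$ bound; thus $|\nabla u_t|\le C$ on $\partial M$. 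The global gradient bound then follows from the maximum principle for an auxiliary function such as $|\nabla u_t|^2 e^{\phi(u_t)}+A(\underline u-u_t)$, the last term absorbing the dangerous terms via the subsolution; the maximum occurs either inside or on $\partial M$, both treated.

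\textbf{Step 2 (second derivatives).} I would first reduce the global estimate to the boundary, $\sup_M|\nabla^2 u_t|\le C(1+\sup_{\partial M}|\nabla^2 u_t|)$: since $\lambda(W[u_t])\in\Gamma_1$, it suffices to bound $\max_M\lambda_{\max}(W[u_t])$, and running the maximum principle for an auxiliary function like $\log\lambda_{\max}(W[u_t])+\phi(|\nabla u_t|^2)+A(\underline u-u_t)$, differentiating \eqref{eqn} twice, and using the concavity inequality $a_t^{ij,kl}\,W_{ij,\xi}W_{kl,\xi}\le0$ to dispose of the third-order terms (the $\underline u$-term absorbing $\mathcal F_t$) yields the reduction. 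The boundary estimate, in Fermi coordinates about $p\in\partial M$ with $w:=u_t-\underline u\ge0$ vanishing on $\partial M$, splits into: (a) pure tangential $(u_t)_{\alpha\beta}(p)$, $\alpha,\beta<n$, read off from $u=\varphi$ and the $C^1$ bound; (b) mixed $(u_t)_{\alpha n}(p)$, controlled by a Caffarelli--Nirenberg--Spruck barrier (cf.\ \cite{CNS,Guan07}) $\Psi=A_1 w+A_2|x-p|^2-A_3 x_n$ dominating $\pm T(u_t-\varphi)$ near $p$ for a suitable tangential first-order operator $T$, with the barrier working by the dichotomy ``either $a_t^{ij}$ is uniformly elliptic, or $\mathcal F_t$ is large and then $a_t^{ij}(W[\underline u]-W[u_t])_{ij}$ gains a large positive contribution transverse to $\partial\Gamma_t$, since $\lambda(W[\underline u])$ is uniformly interior to $\Gamma\subseteq\Gamma_t$''; (c) the double-normal $(u_t)_{nn}(p)$, which is the crux: with (a)--(b) all entries of $W[u_t](p)$ but the $nn$-one are controlled, a lower bound on $(u_t)_{nn}(p)$ follows from $\underline u$ as a lower barrier, and an upper bound by contradiction --- an arbitrarily large $(u_t)_{nn}(p)$ would drive $\lambda(W[u_t](p))$ toward $\partial\Gamma_t$ incompatibly with $f_t(\lambda(W[u_t](p)))=\psi(p,u_t(p))$ being bounded and with the subsolution --- quantified as in \cite{Guan07,Trudinger}. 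All constants here depend only on the data, not on $t$, by the two facts above. This step, in particular part (c) together with its uniformity in $t$, is the main obstacle, and is the reason the argument is confined to the Dirichlet problem.

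\textbf{Step 3 (conclusion).} The uniform $C^2$ bound forces $\lambda(W[u_t])$ into a fixed compact subset of $\Gamma$: $\mu_t:=L_t\lambda(W[u_t])$ satisfies $|\mu_t|\le nC$ and $f(\mu_t)=\psi\in[\min_M\psi,\max_M\psi]$, so $\mu_t\in K_0\Subset\Gamma$ for a fixed compact $K_0$; as $\partial_i f_t(\lambda)=t\,\partial_i f(\mu_t)+(1-t)\sum_j\partial_j f(\mu_t)$ and $Df(\mu_t)$ has all components in a fixed positive compact range, the operators $a_t^{ij}$ are uniformly elliptic in $t$. Uniform ellipticity and concavity of $f_t$ give, via Evans--Krylov interiorly and Krylov's boundary estimate up to $\partial M$ (smooth coefficients and boundary data), a uniform $C^{2,\alpha}(M)$ bound; Schauder bootstrapping then upgrades each $u_t$ to $C^\infty(M)$. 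Existence of $u_t$ for fixed $t$ follows by the method of continuity along a path deforming \eqref{eqn} to a problem with an explicit solution while keeping $\underline u$ a subsolution: openness is the implicit function theorem (the linearization is uniformly elliptic with Dirichlet data and zeroth-order coefficient $-\psi_z\ge0$, hence invertible) and closedness is the a priori estimates with Arzel\`a--Ascoli; alternatively, at $t=1$ one passes to the limit $t\nearrow1$ using the uniform $C^{2,\alpha}$ bound, which in particular upgrades the $C^{0,1}$ solution of \cite{LiN} to $C^\infty(M)$. The asserted uniform estimate is exactly the output of Steps 1--2.
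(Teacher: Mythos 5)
Your Steps 1, 2(a), 2(b) and 3 follow the same standard reduction as the paper (which simply cites \cite{Guan07} for the $C^0$, $C^1$ and global-to-boundary $C^2$ estimates and for the degree-theoretic existence), so the only substantive question is Step 2(c), the double-normal boundary estimate uniform in $t$ --- and this is exactly where your argument has a genuine gap. You assert that a large $\nabla_{nn}u_t(p)$ ``would drive $\lambda(W[u_t](p))$ toward $\partial\Gamma_t$ incompatibly with $f_t(\lambda(W[u_t](p)))=\psi$ being bounded'', ``quantified as in \cite{Guan07,Trudinger}'', with all constants independent of $t$ ``by the two facts above''. This does not work as stated. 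First, the cited arguments are type-dependent: for every $t<1$ the cone $\Gamma_t$ is of type 2 and $f_t$ is of unbounded type (regardless of the types of $\Gamma$ and $f$), so applying \cite{Guan07,Trudinger} verbatim to $(f_t,\Gamma_t)$ yields only $\nabla_{nn}u_t\le C/(1-t)$ --- precisely the degenerate estimate isolated in Lemma \ref{lem:px} of the paper, which comes from the ball of radius $\frac{1-t}{2n}$ inside $\Gamma_t$ (Lemma \ref{Lem:GtBall}). Second, if one instead tries to run the limiting argument for $(f,\Gamma)$ with $\Gamma$ of type 1, boundedness of $\psi$ alone gives no contradiction when $f$ is of bounded type (e.g.\ the quotients $\sigma_k/\sigma_\ell$), since $f(\lambda',s)$ stays bounded as $s\to\infty$; and even in the unbounded-type case one must first show that the tangential eigenvalues $\tilde\eta^{(t)}$ stay in a fixed compact subset of the projected cone $\Gamma'$, uniformly in $t$.

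These missing ingredients are the paper's main contribution. Lemma \ref{key} proves $d_{\Gamma'}(\tilde\eta^{(t)})\ge c_0$ on $\partial M$ for $t$ near $1$ by minimizing over $\partial M$ quantities of the form $\gamma\cdot\tilde\eta^{(t)}-(1-t)\sum_{\alpha<n}\gamma_\alpha\nabla_{nn}u_t$ (the $(1-t)\nabla_{nn}u_t$ correction is essential because $W^t_{\alpha\alpha}[u_t]$ contains $(1-t)\nabla_{nn}u_t$ through the trace term) and running a barrier argument built on Lemmas \ref{Lem:bbb}--\ref{lemma}; in the bounded-type case, Lemma \ref{key2} then establishes the strict gap $f_\infty(\tilde\eta^{(t)})-\psi\ge c_0$ using the supergradient machinery $\mathcal{N}_\infty$ of Lemma \ref{Lem:Finfty}, again with a $(1-t)\,\mathrm{tr}(N)\nabla_{nn}u_t$ correction. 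Your two structural observations ($\Gamma\subseteq\Gamma_t\subseteq\Gamma_1$ with $f_t\ge f$, and the subsolution's eigenvalues lying in a fixed compact subset of $\Gamma$) are correct and are indeed used, but they do not by themselves yield the $t$-uniform double-normal bound; without an argument replacing Lemmas \ref{key} and \ref{key2}, the proof is incomplete at its central point.
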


Our main contribution concerns the uniformity of our estimates with respect to $t$. Available estimates in the literature for a single pair $(f,\Gamma)$ depends on the type of $f$ and/or $\Gamma$ (see Section \ref{Sec2} for terminology). As $t \nearrow 1^-$, the type of $f_t$ and/or $\Gamma_t$ may change, and this calls for care when considering uniform estimates in this limit. Our proof uses a particular property of the family $(f_t,\Gamma_t)$, namely Lemma \ref{Lem:GtBall} in Section \ref{Sec2}. If one replaces the family $(f_t,\Gamma_t)$ by another (suitably continuous) family $(\tilde f_t, \tilde\Gamma_t)$ which tends to $(f,\Gamma)$ as $t \nearrow 1^-$ but the property in Lemma \ref{Lem:GtBall} does not hold for $(\tilde f_t, \tilde \Gamma_t)$, it is not clear if our proof would carry over.

We now  give a sketch of the proof of Theorem \ref{thm}. In the discussion, let $C$ denote a positive constant depending only on the known data such as $\underline{u}, (M, g), (f, \Gamma), \psi$ and $\varphi$, but may be different from lines to lines. In particular, the constants $C$ will be independent of $t \in [0,1]$. The existence of a solution to equation \eqref{eqn} can be proved by the standard degree theory once \emph{a priori} estimates have been established, see \cite{Guan07}.
To this end, we need \emph{a priori} estimates for $u_t$ with $u_t \geq \underline{u}$
solving $\eqref{eqn}$ on $M$
up to its second order derivatives, as higher order estimates follow from Evans-Krylov's Theorem and Schauder theory.
Local interior first and second order derivative estimates has been known. (See also \cite{Duncan, DuncanNguyen21} for local pointwise second derivative estimates for strong solutions.) Since $u_t\geq \underline{u}$ in $M$ and $u_t = \underline{u} $ on $\partial M$, we obviously have
\[
\min_{M} u_t  \geq \min_{M} \underline{u} \; \mbox{and}\;
 \min_{\partial M} \nabla_{\nu} u_t \geq \min_{\partial M} \nabla_{\nu} \underline{u}.
\]
Lemma 3.1 in \cite{Guan07} yields that $\max_{M} u_t \leq C$. Then, Lemma 3.2 and  Theorem 3.3 in \cite{Guan07} ensure that
$\sup_M ( |u_t| + |\nabla u_t| ) \leq C$.
For the estimate of second order derivatives, Theorem 3.4 in \cite{Guan07} shows that
\[
\sup_M |\nabla^2 u_t| \leq C (1 + \sup_{\partial M} |\nabla^2 u_t|).
\]
We should point out that all the above mentioned results in \cite{Guan07} are valid in the present setting
as the proof only used the assumptions \eqref{f1}-\eqref{f4}. To complete this argument, it therefore remains to establish a boundary second derivative esimate, namely a bound for $\sup_{\partial M} |\nabla^2 u_t|$. Therefore, the proof of Theorem \ref{thm} reduces to that of the following result.

\begin{proposition}
	\label{prop}
	Suppose the assumptions in Theorem \ref{thm} hold.
	Then,  for any solution $u_t \in C^3 (M)$ to the Dirichlet problem \eqref{eqn} with $t \in [0,1]$ satisfying $u_t \geq \underline{u}$, we have 
\begin{equation}
\label{estimate-b}
\sup_{\partial M} |\nabla^2 u_t| \leq C,
\end{equation}
where $C$ only depends on $\underline{u}$, $(M, g)$, $(f, \Gamma)$, $\psi$ and $\varphi$.
\end{proposition}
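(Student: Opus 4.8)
The plan is to work in boundary normal coordinates near an arbitrary point $x_0 \in \partial M$ and to estimate the three types of second derivatives at $x_0$ separately: the pure tangential derivatives $\nabla^2_{\alpha\beta} u$ (with $1 \le \alpha, \beta \le n-1$), the mixed derivatives $\nabla^2_{\alpha n} u$, and the pure normal derivative $\nabla^2_{nn} u$. The tangential-tangential estimate is immediate: since $u_t = \underline u = \varphi$ on $\partial M$, the second tangential derivatives of $u_t$ along $\partial M$ are determined by $\varphi$ and the second fundamental form of $\partial M$, so $|\nabla^2_{\alpha\beta} u_t(x_0)| \le C$ with $C$ depending only on $\varphi$, $g$ and $\partial M$; the subsolution $\underline u$ plays no role here. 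The substance of the proof lies in the mixed and double-normal estimates.

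For the mixed derivatives, I would follow the now-standard barrier construction going back to Caffarelli--Nirenberg--Spruck and used in \cite{Guan07}: fix a tangential direction, apply the tangential derivative operator $T = \partial_\alpha + \text{(lower order to respect the boundary)}$ to the equation, and use the concavity \eqref{f3} together with \eqref{f2} to control the resulting linearized expression; then build a barrier of the form $\Phi = A(\underline u - u_t) + B\,d - N\,d^2 + \sum_{\alpha} (\nabla_\alpha u_t - \nabla_\alpha \varphi)^2$-type terms, where $d$ is the distance to $\partial M$, and choose the constants $A, B, N$ large (in the right order) so that the linearized operator $L = \sum_i \partial_i f_t(\lambda(W[u_t]))\, (\cdot)_{ii}$ applied to the barrier has a sign. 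The positivity of $\sum_i \partial_i f_t$ needed to absorb error terms is guaranteed by \eqref{f5} applied to $f_t$ (equivalently by the computation $f_t(\lambda) \ge f(\lambda)$ already recorded in the excerpt, together with \eqref{f5} for $f$). The key point for uniformity in $t$ is that all the structural constants entering the barrier argument — the ellipticity, the lower bound on $\sum_i \partial_i f_t$, and the concavity — can be bounded below/above independently of $t$; this is exactly where Lemma \ref{Lem:GtBall} is invoked, to ensure that $\lambda(W[u_t])$ stays in a fixed compact region of $\Gamma_t$ on which these quantities are controlled uniformly, and to give room for the barrier. From the barrier inequality and the boundary maximum principle one gets $|\nabla^2_{\alpha n} u_t(x_0)| \le C$ uniformly in $t$.

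The double-normal derivative is handled by a distinct argument, since it cannot be read off the boundary data. Here I would use the equation itself: having already controlled $\lambda(W[u_t])$ away from $\partial\Gamma_t$ via a subsolution-based argument (so that $f_t$ is uniformly elliptic at $u_t$), one shows that $\nabla^2_{nn} u_t(x_0)$ cannot be too large by a contradiction/compactness argument on the restricted map $t \mapsto f_t(\kappa[\nabla^2_{nn}u_t] + \text{known tangential and mixed part})$: if $\nabla^2_{nn} u_t \to +\infty$ along a sequence, the corresponding largest eigenvalue of $W[u_t]$ blows up while, by \eqref{f4} and \eqref{f6}, $f_t$ of the eigenvalues would have to stay equal to $\psi(x_0, u_t) \le C$, and one derives a contradiction using that $f_t$ restricted to the relevant ray is bounded below by a positive constant (again using Lemma \ref{Lem:GtBall} to keep the other eigenvalues in a good range). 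A lower bound $\nabla^2_{nn} u_t(x_0) \ge -C$ follows because $\lambda(W[u_t]) \in \Gamma_t \subset \Gamma_1$ forces $\sigma_1(W[u_t]) > 0$, and the tangential and mixed parts are already bounded.

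The main obstacle, and the novelty of the proposition, is keeping every constant uniform as $t \nearrow 1$, where the cone $\Gamma_t$ and the function $f_t$ degenerate in type. In the single-equation treatments of \cite{Guan07, LiN} one exploits structural features (e.g. $\Gamma$ being of a particular type, or bounds like $\sum \partial_i f \ge \delta > 0$ that may deteriorate) that are not manifestly $t$-stable. The resolution is to never use anything beyond \eqref{f1}--\eqref{f6} together with the geometric input of Lemma \ref{Lem:GtBall} on the shape of $\Gamma_t$ relative to balls; concretely, I expect the argument to require showing that $W[\underline u]$ lies in a $t$-independent neighborhood of a compact subset of each $\Gamma_t$ (so $f_t(\lambda(W[\underline u]))$ is bounded below uniformly), and that this forces $\lambda(W[u_t])$ into a region where $\partial_i f_t$ obey uniform two-sided bounds — making the barrier and the double-normal estimates go through with $t$-independent constants. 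Verifying this uniform confinement is the technical heart of the proof.
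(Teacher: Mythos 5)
Your decomposition into tangential, mixed and double-normal estimates matches the paper, and your treatment of the first two is essentially the paper's (the tangential bound is read off the boundary condition; the mixed bound comes from a barrier built on a quantity like $\pm\nabla_\alpha(u_t-\varphi)-\sum_{\ell<n}|\nabla_\ell(u_t-\varphi)|^2$ plus $B\rho^2+Av$, where the crucial property $\mathcal{L}_t v\le-\varepsilon(1+\cT_t)$ of the subsolution-based barrier $v$ is obtained from the dichotomy of Lemma \ref{guan}). The gap is in the double-normal estimate, on two counts.

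First, your contradiction argument --- ``if $\nabla_{nn}u_t\to\infty$ then $f_t$ of the eigenvalues blows up while it must equal $\psi\le C$'' --- is only valid when $f$ is of unbounded type or $\Gamma$ is of type $2$. For bounded-type $f$ (e.g.\ the quotients $(\sigma_k/\sigma_\ell)^{1/(k-\ell)}$), $f(\lambda',s)$ stays bounded as $s\to\infty$ and converges to $f_\infty(\lambda')$, so no contradiction arises from largeness of the last eigenvalue alone; one must in addition prove the strict gap $f_\infty(\tilde\eta^{(t)})-\psi\ge c_0>0$ on $\partial M$, uniformly in $t$. This is Lemma \ref{key2} in the paper and requires its own minimum-point barrier argument built from the supporting linear functionals $\mathcal{N}_\infty(B)$ of the concave function $F_\infty$ (Lemma \ref{Lem:Finfty}), with the correction term $-(1-t)\,\mathrm{tr}(N)\nabla_{nn}u_t$ inserted precisely to survive the limit $t\nearrow 1$. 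Second, the ``uniform confinement'' you correctly identify as the technical heart --- that $\tilde\eta^{(t)}$ stays a fixed distance from $\partial\Gamma'$ --- is asserted in your proposal but no mechanism is given, and Lemma \ref{Lem:GtBall} does not supply it: that lemma only yields the degenerate bound $\nabla_{nn}u_t\le C/(1-t)$ and hence $|W^t_{ij}[u_t]|\le C$ for $(i,j)\ne(n,n)$ (Lemma \ref{lem:px}); it says nothing about the distance of the tangential eigenvalues to $\partial\Gamma'$. The paper proves confinement in Lemma \ref{key} by minimizing $\zeta_{t,\lambda_0',\gamma}=\gamma\cdot\tilde\eta^{(t)}-(1-t)\sum_{\alpha<n}\gamma_\alpha\nabla_{nn}u_t$ over $\partial M$ for each supporting direction $\gamma$ of $\Gamma'$, bounding $\nabla_{nn}u_t$ at the minimum point via a barrier, and then using the characterization \eqref{Eq:T0} of $d_{\Gamma'}$; without this step (or a substitute), neither your unbounded-type argument nor the bounded-type case can be closed.
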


As pointed out earlier, our main contribution concerns a bound for $\sup_{\partial M} |\nabla^2 u_t|$ which is independent of $t \in [0,1]$. The estimation of double tangential derivatives and mixed tangential-normal derivatives follows largely the arguments in \cite{Guan99, Guan07,Guan14,Guan18,GJ}. We note that $t$-dependent estimate of the double normal derivatives for general $f$ for $t < 1$ was done in \cite{LiN}. However, to achieve a uniform estimate as $t \nearrow 1^-$ requires new ideas.

As alluded to previously, we now present an example to illustrate a subtlety concerning the regularity of solutions of \eqref{conformal-a'} and \eqref{eqn}. For some $\ell > 0$ to be specified, let $(M_\ell,g)$ be the round cylinder
\begin{align*}
	M_\ell
	&= [-\ell,\ell] \times  \mathbb{S}^{n-1},\\
	g
	&= dt^2 + h,	
\end{align*}
where  $t$ is a dummy variable along the $[-\ell,\ell]$-factor and $h$ is the standard metric on $\mathbb{S}^{n-1}$. 

\begin{example}\label{Ex1}
	Let $2 \leq k \leq n$. For every $c \in \RR$, there exists $\ell > 0$ such that the problem
	\begin{equation}
		\begin{cases}
			\sigma_k(\lambda_g(W[u])) = \frac{n}{k 2^k} \tbinom{n-1}{k-1}e^{-2ku}, \quad \lambda_g(W[u]) \in \Gamma_k &\text{ in } M_\ell \setminus \partial M_\ell,\\
			u = c &\text{ on } \partial M_\ell,\\
		\end{cases}
		\label{Eq:1.3Cyl}
	\end{equation}
	admits a solution $u \in \Big(C^1(M_\ell) \cap C^\infty(M_\ell \setminus \partial M_\ell)\Big) \setminus C^2(M_\ell)$.
\end{example}

The failure of smoothness of the solution in Example \ref{Ex1} at $\partial M_\ell$
is similar to that in \cite{LiN-2012}.
See also \cite{LiN-2021} for a related phenomenon.

We point out that the solution in Example \ref{Ex1} does not lie above any smooth subsolution that we are aware of. We also do not know if it can be approximated by a sequence of smooth solutions. We conclude the introduction with two related questions on solutions of \eqref{conformal-a'} which are not necessarily bounded from below by a smooth subsolution.

\begin{question}
Assume $\psi$ is positive and smooth in $M$ and $\varphi$ is smooth on $\partial M$. Is the set of \emph{smooth} solutions to \eqref{conformal-a'} with a given $C^1$ bound bounded in $C^2(M)$?
\end{question}

Note that, by Theorem \ref{thm}, every set of smooth solutions of \eqref{conformal-a'} lying above a common smooth subsolution is bounded in $C^2(M)$.

\begin{question}
Assume $\psi$ is positive and smooth in $M$ and $\varphi$ is smooth on $\partial M$. Can every $C^1(M)$ viscosity solution to \eqref{conformal-a'} be approximated in $C^0(M)$ by $C^2(M)$ solutions?
\end{question}

The paper is organized as follows. In Section \ref{Sec2}, we introduce some useful notations, analyze the structure of the symmetric function $f$ and prove an auxiliary result which will be used to construct various barriers later on (see Lemma \ref{lemma}). In Section \ref{Sec3}, we derive the boundary estimate in Proposition \ref{prop}. In Section \ref{Sec:ENS}, we give the proof of the assertion in Example \ref{Ex1}.

\subsection*{Acknowledgments.}  Part of this work was carried out while Dong was a postdoc at Rutgers University, supported by the China Scholarship Council (File No. 201806255014). He is grateful to the Department of Mathematics at Rutgers for its warm hospitality. Li was partially supported by NSF grant DMS-2247410.

\subsection*{Rights retention statement.} For the purpose of Open Access, the authors have applied a CC BY public copyright licence to any Author Accepted Manuscript (AAM) version arising from this submission.

\section{Notations and preliminary results}\label{Sec2}

\subsection{Some facts on $(f,\Gamma)$ and $(f_t, \Gamma_t)$}
Let $\Gamma \subset \mathbb{R}^n$ be an open convex symmetric cone with vertex at the origin satisfying
$\Gamma_n \subset \Gamma \subset \Gamma_1$ and $f\in C^{\infty } (\Gamma) \cap C^0 (\bar \Gamma)$ be a symmetric function. In this subsection, we collect various properties for $(f,\Gamma)$ and $(f_t, \Gamma_t)$ which will be used in the proof of Proposition \ref{prop}. Most of these properties have appeared elsewhere. We first recall a definition in \cite{CNS}.

\begin{definition}[\cite{CNS}]\label{defGType}
$\Gamma$ is said to be of type 1 if the positive $\lambda_i$ axes belong to $\partial \Gamma$; otherwise it
is said to be of type 2.
\end{definition}

Denote by $\Gamma'$ the projection of the cone $\Gamma \subset \RR^n = \RR^{n-1} \times \RR$ onto $\mathbb{R}^{n-1}$
and by $\lambda'$ the projection of $\lambda \in \Gamma$ onto $\Gamma'$.
If $\Gamma$ is of type 1, $\Gamma'$ is an open convex symmetric cone in $\mathbb{R}^{n-1}$ and
\[
\{\lambda' \in \mathbb{R}^{n-1}: \lambda'_\alpha > 0, 1 \leq \alpha \leq n -1 \}
\subset \Gamma' \subset
\{\lambda' \in \mathbb{R}^{n-1}: \lambda'_1 + \cdots + \lambda'_{n-1} > 0\}.
\]
If $\Gamma$ is of type 2, $\Gamma' = \mathbb{R}^{n-1}$.

We note that, regardless whether $\Gamma$ is of type 1 or of type 2, the cones $\Gamma_t$ are always of type $2$ for $t \in [0,1)$. In fact, we have:

\begin{lemma}\label{Lem:GtBall}
Let $\Gamma \subset \mathbb{R}^n$ be an open convex symmetric cone with vertex at the origin satisfying
$\Gamma_n \subset \Gamma \subset \Gamma_1$. For $t \in [0,1)$, then cone $\Gamma_t$ contains the open ball of radius $\frac{1}{2n}(1-t)$ about $(0,\ldots, 0,1)$. Moreover, if $f\in C^{\infty } (\Gamma)$ is a symmetric function satisfying \eqref{f2} and \eqref{f4}, then 
\[
f_t(\lambda) \geq \frac{1}{2}(1-t)f(e)
\]
for any $\lambda$ in this ball.
\end{lemma}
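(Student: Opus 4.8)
The plan is to verify the two claims by direct, elementary computation, the crucial input being the fact that $\Gamma$, being an open convex symmetric cone with $\Gamma_n \subset \Gamma \subset \Gamma_1$, must contain a fixed ball around the direction $e = (1,\ldots,1)$. First I would record this fact explicitly: since $e \in \Gamma_n \subset \Gamma$ and $\Gamma$ is open, some ball $B_\rho(e) \subset \Gamma$; by symmetry and convexity of $\Gamma$ one can in fact take $\rho$ to be a universal constant (independent of $\Gamma$). A clean way to pin down the constant is to note that any vector $\mu$ with $|\mu - e| < 1$ has all coordinates positive, hence lies in $\Gamma_n \subset \Gamma$; so $B_1(e) \subset \Gamma$, which is the form of the constant (a bare $1$, rescaled to $\frac1n$ after dividing by $\sigma_1(e) = n$) that produces the $\frac{1}{2n}(1-t)$ in the statement.

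For the first claim, recall from \eqref{Eq:Gt} that $\lambda \in \Gamma_t$ iff $\Phi_t(\lambda) := t\lambda + (1-t)\sigma_1(\lambda)e \in \Gamma$. Take $\lambda = (0,\ldots,0,1) + v$ with $|v| < \frac{1}{2n}(1-t)$; I would compute $\Phi_t(\lambda)$ and show it lies in $B_1(e) \subset \Gamma$. Writing $\Phi_t(\lambda) = (1-t)\,e + w$ where $w = t\lambda + (1-t)(\sigma_1(\lambda) - n)e$, the point is that $\Phi_t(\lambda)$ is a positive multiple of $e$ plus a small error: $\Phi_t(\lambda) = (1-t)\,\sigma_1(\lambda)\,\big[e + \tfrac{t}{(1-t)\sigma_1(\lambda)}\lambda\big]$ after noting $\sigma_1((0,\ldots,0,1)) = 1$ and $|\sigma_1(v)| \le \sqrt{n}\,|v|$, so $\sigma_1(\lambda)$ is close to $1$. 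By homogeneity of $\Gamma$ it suffices that the bracketed vector lies in $\Gamma$, i.e. that $\big|\tfrac{t}{(1-t)\sigma_1(\lambda)}\lambda\big| < 1$; since $|\lambda| \le 1 + \frac{1}{2n}(1-t) \le \frac{3}{2}$ and $\sigma_1(\lambda) \ge 1 - \sqrt{n}\cdot\frac{1}{2n}(1-t) \ge \frac12$ for the relevant range, one gets a bound of the shape $\tfrac{t}{(1-t)}\cdot\frac{|\lambda|}{\sigma_1(\lambda)}$, which unfortunately blows up as $t \to 1$. So this naive estimate is not quite enough and I would instead argue more carefully: write $\lambda = e_n + v$ and note $\Phi_t(e_n + v) = \Phi_t(e_n) + \Phi_t(v)$ by linearity of $\Phi_t$, with $\Phi_t(e_n) = t\,e_n + (1-t)e$ and $|\Phi_t(v)| \le (t + (1-t)\sqrt n\,|e|/|v|\cdot|v|) \cdots$ — cleaner still: $|\Phi_t(v)| \le |v|\,\|\Phi_t\|_{\rm op}$, and $\|\Phi_t\|_{\rm op} \le t + (1-t)\sqrt n\,\sqrt n = t + n(1-t) \le n$. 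Meanwhile $\Phi_t(e_n) = (1-t)e + t e_n$; the vector $(1-t)e$ lies on the ray through $e$, and $|\Phi_t(e_n) - (1-t)e| = t\,|e_n| = t \le 1$ — but we need the error relative to the ball $B_{(1-t)}((1-t)e)$, which shrinks. The resolution, which I would adopt, is: $\Gamma$ contains not just $B_1(e)$ but the whole open cone generated by it, hence $\Gamma \supset \{\mu : \mu = s\,e + \eta,\ s > 0,\ |\eta| < s\}$. We have $\Phi_t(\lambda) = \Phi_t(e_n) + \Phi_t(v) = (1-t)e + \big[t\,e_n + \Phi_t(v)\big]$, and I must show the bracket has norm $< 1-t$. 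Since $|t\,e_n| = t$, this still fails for $t$ near $1$ — which tells me the correct decomposition uses the $e$-component of $e_n$ itself. Writing $e_n = \frac1n e + e_n^\perp$ with $|e_n^\perp|^2 = 1 - \frac1n$, we get $\Phi_t(e_n) = (t\frac1n + 1 - t)e + t\,e_n^\perp = \big(1 - t\frac{n-1}{n}\big)e + t\,e_n^\perp$, whose $e$-coefficient $s_0 := 1 - t\frac{n-1}{n} \ge \frac1n > 0$ and whose perpendicular part has norm $t\sqrt{1 - 1/n} < s_0$ precisely when — one checks — this holds for all $t \in [0,1)$, and with room to spare. Adding the perturbation $\Phi_t(v)$, with $|\Phi_t(v)| \le n|v| < n \cdot \frac{1}{2n}(1-t) = \frac12(1-t)$, keeps $\Phi_t(\lambda)$ inside this solid cone provided $t\sqrt{1-1/n} + \frac12(1-t) < 1 - t\frac{n-1}{n}$, an inequality I would verify holds for all $t \in [0,1)$ and $n \ge 3$. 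This puts $\Phi_t(\lambda) \in \Gamma$, proving $B_{\frac{1}{2n}(1-t)}(e_n) \subset \Gamma_t$.

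For the second claim, on this ball I want a lower bound $f_t(\lambda) = f(\Phi_t(\lambda)) \ge \frac12(1-t)f(e)$. By \eqref{f2} (monotonicity) it is enough to bound $f(\Phi_t(\lambda))$ below by $f$ evaluated at a point componentwise dominated by $\Phi_t(\lambda)$; a natural candidate is $\frac12(1-t)e$, since by the computation above $\Phi_t(\lambda) = s_0 e + (\text{perp part of size} < s_0)$ with a definite gap, so in particular every coordinate of $\Phi_t(\lambda)$ is at least $s_0 - t\sqrt{1-1/n} - \frac12(1-t) \ge \frac12(1-t)$ — this is exactly the same inequality as above, rearranged, giving $\Phi_t(\lambda) \ge \frac12(1-t)e$ coordinatewise. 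Then \eqref{f2} gives $f(\Phi_t(\lambda)) \ge f(\frac12(1-t)e) = \frac12(1-t)f(e)$ by homogeneity \eqref{f4}, as desired.

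The main obstacle, as the first attempts above show, is that the ``obvious'' estimate $|\Phi_t(\lambda) - (1-t)e| \le t$ is too lossy near $t = 1$: one must exploit that $e_n$ itself has a nonnegligible component along $e$, so that $\Phi_t(e_n)$ — despite $t \to 1$ — stays well inside the solid cone over $B_1(e)$ with a margin comparable to $1-t$, and only then does the $O((1-t))$ perturbation $\Phi_t(v)$ not push it out. Everything else is bookkeeping with the operator norm of $\Phi_t$ (bounded by $n$) and the monotonicity/homogeneity of $f$.
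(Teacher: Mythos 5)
There is a genuine gap: the central inequality you rely on is false. You reduce the first claim to showing
\[
t\sqrt{1-\tfrac{1}{n}} + \tfrac12(1-t) \;<\; 1 - t\,\tfrac{n-1}{n},
\]
and assert that this ``holds for all $t \in [0,1)$ and $n \ge 3$''. It does not: at $t=1$ it reads $\sqrt{(n-1)/n} < 1/n$, i.e.\ $n(n-1) < 1$, and by continuity it fails for all $t$ sufficiently close to $1$ --- exactly the regime the lemma is designed for. The failure is not just in the bookkeeping but in the strategy: the solid round cone $\{se+\eta : s>0,\ |\eta|<s\}$ generated by $B_1(e)$ is a \emph{strictly smaller} cone than $\Gamma_n$, and it does not contain $e_n=(0,\ldots,0,1)$ when $n\ge 3$ (the condition $|e_n - se|<s$ reads $(n-1)s^2-2s+1<0$, which has no real solution). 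Since $\Phi_t(e_n)=te_n+(1-t)e \to e_n$ as $t\to 1^-$, no estimate can place $\Phi_t(\lambda)$ in that round cone uniformly near $t=1$. The same false inequality reappears, rearranged, in your coordinatewise bound for the second claim: replacing the coordinates of $t e_n^\perp$ by its Euclidean norm $t\sqrt{1-1/n}$ throws away the fact that its negative coordinates are only $-t/n$ each.

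The repair is short and is essentially what the paper does: work with the coordinatewise partial order (i.e.\ with $\bar\Gamma_n$, which is contained in $\bar\Gamma_t$ as a ``monotonicity cone''), not with the inscribed round cone. The minimum coordinate of $\Phi_t(e_n)=te_n+(1-t)e$ is exactly $1-t$ (not $s_0 - t\sqrt{1-1/n}$), so together with your correct bound $|\Phi_t(v)|\le n|v| < \tfrac12(1-t)$ one gets $\Phi_t(\lambda) \ge \tfrac12(1-t)e$ coordinatewise; hence $\Phi_t(\lambda)\in\Gamma_n\subset\Gamma$ and, by \eqref{f2} and \eqref{f4}, $f_t(\lambda)\ge \tfrac12(1-t)f(e)$. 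The paper implements the same idea slightly differently: it first observes that every point of the ball dominates, in the $\bar\Gamma_n$-order, the single worst point $e_n - \tfrac{1-t}{2n}e$, and then computes $\Phi_t$ of that point explicitly as $te_n + \tfrac{(1-t)(n+(n-1)t)}{2n}e \ge \tfrac{(1-t)(n+(n-1)t)}{2n}e$. Your final paragraph gestures at the right conclusion ($\Phi_t(\lambda)\ge\tfrac12(1-t)e$ coordinatewise), but derives it from the false inequality, so as written the proof does not go through.
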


\begin{proof}
For two vectors $\lambda, \tilde\lambda \in \RR^n$, we write $\lambda \geq \tilde\lambda$ if $\lambda - \tilde\lambda \in \bar\Gamma_n$. For $v \in \RR^n$ with $|v| = 1$, $r> 0$, we have
\[
(0,\ldots, 0,1) + rv \geq (-r, \ldots, - r, 1 - r)
\]
Thus, we only need to show that $\lambda := (-\frac{1}{2n}(1 - t), \ldots, - \frac{1}{2n}(1 - t), 1 - \frac{1}{2n}(1 - t)) \in \Gamma_t$ and $f_t(\lambda) \geq \frac{1}{2}(1 - t) f(e)$. Indeed, we have
\begin{align*}
t \lambda + (1-t) \sigma_1(\lambda)e
	& = (0, \ldots, 0, t) +  \frac{(1-t)(n + (n-1)t)}{2n}e\\
	&\geq \frac{(1-t)(n + (n-1)t)}{2n} e  \in \Gamma_n \subset \Gamma,
\end{align*}
which implies $\lambda \in \Gamma_t$ and
\[
f_t(\lambda) = f(t \lambda + (1-t) \sigma_1(\lambda)e) \geq \frac{1}{2}(1 - t) f(e).
\]
The proof is complete.
\end{proof}

In case $\Gamma$ is of type $1$, $\Gamma'$ is a proper subset of $\RR^{n-1}$. Let $d_{\Gamma'}$ denote the distance function to $\partial\Gamma'$. For $\lambda_0' \in \partial\Gamma'$, let $N_{\lambda_0'}(\partial\Gamma')$ denote the set of unit vectors $\gamma \in \RR^{n-1}$ such that $\Gamma'$ is contained in the half-space $\{\lambda' \in \RR^{n-1}: \gamma \cdot (\lambda' - \lambda_0') > 0\}$. We list some properties of $N_{\lambda_0'}(\partial\Gamma')$ and $d_{\Gamma'}$ which we will use later on.
\begin{itemize}
\item Since $\partial\Gamma'$ contains the ray $\RR_+\lambda_0'$, $\gamma \cdot \lambda_0' = 0$.
\item It follows that $\Gamma' \subset \{\lambda' \in \RR^{n-1}: \gamma \cdot \lambda'  > 0\}$.
\item Since $\Gamma' \supset \Gamma_n'$, this further implies that $\gamma_\alpha \geq 0$ for $1 \leq \alpha \leq n-1$.
\item For any ascending $\lambda' \in \Gamma'$,
\begin{equation}
d_{\Gamma'}(\lambda') = \inf_{\substack{\text{ascending }\lambda_0' \in \partial\Gamma'\\\text{descending } \gamma \in N_{\lambda_0'}(\partial\Gamma')}} \gamma\cdot \lambda'.
	\label{Eq:T0}
\end{equation}
\end{itemize}
The first three properties are immediate. To see the fourth property, denote the right hand side of \eqref{Eq:T0} as $\tilde d(\lambda')$. Note that $\tilde d(\lambda')$ is well-defined since there clearly exists an ascending $\lambda_0' \in \partial\Gamma'$ (e.g. $(0, \ldots, 0, 1)$) and the existence of a descending $\gamma$ in $N_{\lambda_0'}(\partial\Gamma')$ is given by \cite[Lemma 6.1]{CNS}. The inequality $\tilde d(\lambda') \geq d_{\Gamma'}(\lambda')$ is a consequence of the fact that $\Gamma'$ is supported by the hyperplane through $\lambda_0'$ and normal to $\gamma$. To see the reverse inequality, let $\lambda'_* \in \partial\Gamma'$ be such that $d_{\Gamma'}(\lambda') = |\lambda' - \lambda_*'|$. Let $\lambda'_{**}$ be an ascending rearrangement of $\lambda_*'$. By symmetricity of $\Gamma'$, we have that $\lambda_{**}' \in \partial\Gamma'$. Since $\lambda'$ is ascending, a simple induction argument on $n$ gives
\[
\lambda' \cdot \lambda_{**}' \geq \lambda' \cdot \lambda_*',
\]
which, in view of the fact that $|\lambda_*'| = |\lambda_{**}'|$, implies
\[
|\lambda' - \lambda_{**}'| \leq |\lambda' - \lambda_*'|.
\]
It follows that $d_{\Gamma'}(\lambda') = |\lambda' - \lambda_*'| = |\lambda' - \lambda_{**}'|$. Since any hyperplane at $\lambda_{**}'$ supporting $\Gamma'$ must support the ball centered at $\lambda'$ of radius $|\lambda' - \lambda_{**}'|$, we see that $N_{\lambda_{**}'}(\partial\Gamma')$ contains a unique element, namely $\gamma_{**} := \frac{\lambda' - \lambda_{**}'}{|\lambda' - \lambda_{**}'|}$. By \cite[Lemma 6.1]{CNS}, $\gamma_{**}$ is descending. It follows that 
\[
\tilde d(\lambda') \leq \gamma_{**} \cdot \lambda' = \gamma_{**} \cdot (\lambda' - \lambda_{**}') = |\lambda' - \lambda_{**}'| = d_{\Gamma}(\lambda').
\]
We have thus proved \eqref{Eq:T0}.

We next discuss certain properties of $f$ which will be used later on. The same or similar properties have appeared previously elsewhere.

\begin{definition}[\cite{Trudinger}]
	\label{def}
We say $f$ is of \emph{unbounded type} if  
\begin{equation}
\label{unbounded}
\lim_{s\rightarrow + \infty} f(\lambda_1, \ldots, \lambda_{n-1}, s) = + \infty 
\end{equation}
for every $\lambda' = (\lambda_1, \ldots, \lambda_{n-1}) \in \Gamma'$. If \eqref{unbounded} fails for some $\lambda' \in \Gamma'$, we say $f$ is of \emph{bounded type}.
\end{definition}

For example, the function $\sigma_k^{1/k} (\lambda)$ is of unbounded type for $1 \leq k \leq n$ while the quotient function $\Big( \frac{\sigma_k (\lambda)}{\sigma_\ell (\lambda)} \Big)^{ \frac{1}{k-\ell} }$ is of bounded type for $1 \leq \ell < k \leq n$.

We note that if $\Gamma$ is of type 2 and $f$ satisfies \eqref{f4}, then 
\[
\lim_{s \rightarrow + \infty} \frac{1}{s} f (\lambda', s) 
	= \lim_{s \rightarrow + \infty}   f (\frac{\lambda' }{s} , 1)  = f(0', 1) > 0,
\]
and hence $f$ is of unbounded type. A consequence of the above is that, if $f$ satisfies \eqref{f4}, then, regardless of the type of $\Gamma$ and $f$, $f_t$ is always of unbounded type for $t \in [0,1)$.

We should clarify that, by our assumption \eqref{f1} and \eqref{f3}, if, at one point $\lambda' \in \Gamma'$,
\eqref{unbounded} holds, it then holds at any $\mu' \in \Gamma'$.
A simple proof is given below.
For any $\mu' = (\mu_1, \ldots, \mu_{n-1}) \in \Gamma'$, we can choose sufficiently small $\varepsilon > 0$ such that
$\gamma' = \lambda' + (1+\varepsilon)(\mu' - \lambda')$ is also in $\Gamma'$. Then, by the concavity \eqref{f3} of $f$,
we have
\[
f(\mu_1, \ldots, \mu_{n-1}, s) \geq \frac{1}{1+\varepsilon} f(\gamma_1, \ldots, \gamma_{n-1}, s)
+\frac{\varepsilon}{1+\varepsilon} f(\lambda_1, \ldots, \lambda_{n-1}, s),
\]
for $s$ large enough.
We see that, by \eqref{f1},
\[
\lim_{s\rightarrow + \infty} f(\mu_1, \ldots, \mu_{n-1}, s) = + \infty.
\]
Hence, for every $C>0$ and every compact set $K$ in $\Gamma$, there is a positive number $R = R(C, K)$ such that
\begin{equation}
\label{Condition7}
f (\lambda_1, \ldots, \lambda_n + R) \geq C, \; \mbox{for all} \;\; \lambda\in K.
\end{equation}
The above inequality is exactly \cite[condition (7)]{CNS}
and is used by Guan (see \cite[equation (1.13)]{Guan07}) to derive the boundary $C^2$ estimate.

On the other hand, if \eqref{unbounded} fails for some $\lambda'= (\lambda_1, \ldots, \lambda_{n-1}) \in \Gamma'$,
we can define the following function on $\Gamma'$:
\begin{equation}
\label{f-infty}
f_\infty (\lambda')
= \lim_{s\rightarrow + \infty} f(\lambda_1, \ldots, \lambda_{n-1}, s),\; \mbox{for all}\; \lambda' \in \Gamma'.
\end{equation}
Evidently, $f_\infty$ is homogeneous of degree one
and $f_\infty$ is non-decreasing when $\lambda_i$ is increasing for every $1\leq i \leq n-1$.
Note that $f_\infty$ is concave in $\Gamma'$, so it is also continuous in $\Gamma'$.
Furthermore, $f_\infty$ is punctually second order differentiable almost everywhere in $\Gamma'$.

Let $\mathcal{U}$ be the set of $(n-1)\times(n-1)$ symmetric matrices whose eigenvalues belong to $\Gamma'$. When $\Gamma$ is of type $1$, we may define
\[
F_\infty (A ) : = f_\infty ( \lambda(A) ) \text{ for } A \in \mathcal{U}.
\]
For every $B \in \mathcal{U}$, let $\mathcal{N}_\infty(B)$ denote the set of symmetric $(n-1)\times (n-1)$ matrices $N$ such that 
\begin{equation}\label{F}
	\sum_{\alpha,\beta} N_{\alpha\beta} (A_{\alpha \beta} - B_{\alpha \beta}) \geq F_\infty (A) - F_\infty (B) \text{ for all } A \in \mathcal{U}.
\end{equation}
The set $\mathcal{N}_\infty(B)$ is non-empty thanks to the concavity of $F_\infty$.

\begin{lemma}\label{Lem:Finfty}
Suppose $\Gamma$ is of type $1$. The following statements hold.
\begin{enumerate}[(i)]
\item If $B \in \mathcal{U}$, then every matrix $N$ in $\mathcal{N}_\infty(B)$ is positive semi-definite.

\item The set-valued map $\mathcal{N}_\infty$ anti-monotone:
\[
\sum_{\alpha,\beta}( N_{\alpha\beta} - K_{\alpha\beta} )
(B_{\alpha \beta} - C_{\alpha \beta}) \leq 0 \text{ for all } B,C \in \mathcal{U}, N \in \mathcal{N}_\infty(B), K \in \mathcal{N}_\infty(C).
\]
\item For any $A \in \mathcal{U}$ and any set $\mathcal{V} \subset \mathcal{U}$ containing $A$,
\begin{equation}
F_\infty(A) = \inf_{B \in \mathcal{V}, N \in \mathcal{N}_\infty(B)} \Big[ F_\infty(B) + \sum_{\alpha,\beta} N_{\alpha\beta} (A_{\alpha\beta} - B_{\alpha\beta})\Big].
	\label{Eq:T3}
\end{equation}

\item For any compact set $\mathcal{K} \subset \mathcal{U}$, there exists $C = C(\mathcal{K},F_\infty)$ such that $\|N\| \leq C$ for every $N \in \mathcal{N}_\infty(B), B \in \mathcal{K}$.

\end{enumerate}
\end{lemma}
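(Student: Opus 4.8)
The plan is to exploit that $F_\infty$ is a concave, degree-one homogeneous function of the eigenvalues, monotone nondecreasing in each entry, and that its domain is the cone $\mathcal{U}$ of matrices with eigenvalues in the type-1 cone $\Gamma'$. For part (i), I would argue as follows: fix $B \in \mathcal{U}$ and $N \in \mathcal{N}_\infty(B)$. First, since $F_\infty$ is monotone (nondecreasing as each $\lambda_\alpha$ increases) and $\mathcal{U} + \bar{\mathcal{U}}_n \subset \mathcal{U}$ (where $\bar{\mathcal{U}}_n$ denotes positive semidefinite matrices), one has $F_\infty(B+P) \geq F_\infty(B)$ for any $P \geq 0$; plugging $A = B + tP$ into \eqref{F} and letting $t \to 0^+$ gives $\sum N_{\alpha\beta} P_{\alpha\beta} \geq 0$ for all $P \geq 0$, which is exactly the statement that $N$ is positive semidefinite. (Monotonicity of $F_\infty$ follows because $f_\infty$ inherits the property from $f$ via the limit in \eqref{f-infty}, and a symmetric monotone function of eigenvalues pulls back to a monotone function of symmetric matrices by the standard eigenvalue-perturbation argument.)

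For part (ii), the anti-monotonicity is the routine monotonicity-of-subdifferential statement for concave functions: add the two supporting-plane inequalities $\sum N_{\alpha\beta}(C_{\alpha\beta}-B_{\alpha\beta}) \geq F_\infty(C) - F_\infty(B)$ and $\sum K_{\alpha\beta}(B_{\alpha\beta}-C_{\alpha\beta}) \geq F_\infty(B) - F_\infty(C)$ (obtained from \eqref{F} applied to $(B,N)$ with $A=C$, and to $(C,K)$ with $A=B$), and the right-hand sides cancel, leaving $\sum(N_{\alpha\beta}-K_{\alpha\beta})(C_{\alpha\beta}-B_{\alpha\beta}) \geq 0$, i.e. the claimed inequality after rearranging signs. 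For part (iii), one direction is immediate from \eqref{F}: for every admissible $(B,N)$ the bracket is $\geq F_\infty(A)$, so the infimum is $\geq F_\infty(A)$; the reverse is achieved by taking $B = A$ itself, for which the bracket equals $F_\infty(A)$ exactly — and $\mathcal{N}_\infty(A) \neq \emptyset$ by concavity of $F_\infty$, which is where we use that $A \in \mathcal{V}$.

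Part (iv) is the substantive point — a uniform Lipschitz-type bound on the "subdifferential" of $F_\infty$ over a compact subset of the open set $\mathcal{U}$. The plan is: given the compact set $\mathcal{K} \subset \mathcal{U}$, choose $\rho > 0$ so that the closed $\rho$-neighborhood $\mathcal{K}_\rho$ of $\mathcal{K}$ (in the matrix norm) still lies in $\mathcal{U}$; since $F_\infty$ is concave and finite on the open set $\mathcal{U}$, it is locally Lipschitz there, so $L := \sup_{\mathcal{K}_\rho}\|F_\infty\|_{\mathrm{Lip}} < \infty$ (concretely, $F_\infty$ is bounded on the slightly larger compact $\mathcal{K}_{2\rho}$, and a concave function bounded on a ball of radius $2\rho$ is Lipschitz on the concentric ball of radius $\rho$ with constant controlled by the oscillation over the sup norm and $\rho$). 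Now for $B \in \mathcal{K}$ and $N \in \mathcal{N}_\infty(B)$, apply \eqref{F} with $A = B + \rho P$, where $P$ is the symmetric matrix with $\|P\| = 1$ chosen so that $\sum N_{\alpha\beta}P_{\alpha\beta} = \|N\|$ (in the appropriate dual norm; since $N \geq 0$ by part (i) one may even take $P$ a rank-one projection if one uses the operator norm, but any norm works up to a dimensional constant): this gives $\rho\|N\| = \sum N_{\alpha\beta}(A_{\alpha\beta}-B_{\alpha\beta}) \geq F_\infty(A) - F_\infty(B) \geq -L\rho$, which only bounds $\|N\|$ from below, so instead I take $P$ with $\sum N_{\alpha\beta}P_{\alpha\beta}$ achieving the norm and get the upper bound from $F_\infty(B) - F_\infty(A) \leq L\rho\|P\| = L\rho$ rearranged as $\sum N_{\alpha\beta}(B_{\alpha\beta}-A_{\alpha\beta}) = -\rho\|N\| \geq F_\infty(B)-F_\infty(A) \geq -L\rho$, hence $\|N\| \leq L$. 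So $C = L = C(\mathcal{K},F_\infty)$ works. The main obstacle I anticipate is being careful about whether $\mathcal{N}_\infty(B)$ could be empty or whether $F_\infty$ could fail to be finite/locally Lipschitz near $\partial\mathcal{U}$ — but since $\mathcal{K}$ is a compact subset of the \emph{open} cone $\mathcal{U}$ and $F_\infty$ is concave and finite on all of $\mathcal{U}$ (being a limit of the finite concave functions $f(\cdot,s)$, monotone in $s$), both issues dissolve, and the only genuine care needed is the elementary-but-fiddly passage from "concave and locally bounded" to "locally Lipschitz with an explicit constant," which is standard convex analysis.
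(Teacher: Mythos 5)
Parts (i)--(iii) of your argument coincide with the paper's: perturbation by positive semi-definite (the paper uses rank-one $\xi\otimes\xi$) matrices for (i), adding the two supporting-plane inequalities for (ii), and taking $B=A$ for the nontrivial direction of (iii). For part (iv) you take a genuinely different route. The paper first bounds $\mathrm{tr}(N)$ for a \emph{single} $B$ by testing \eqref{F} with $A = B - \varepsilon I$, converts this to a norm bound via the positive semi-definiteness from (i), and then obtains uniformity over the compact set $\mathcal{K}$ by covering it with finitely many translates $A_i + \mathcal{P}$ of the positive cone and invoking the anti-monotonicity from (ii). You instead use the standard convex-analysis fact that a finite concave function on the open convex set $\mathcal{U}$ is locally Lipschitz, fix a uniform Lipschitz constant $L$ on a compact neighborhood $\mathcal{K}_\rho \subset \mathcal{U}$, and read off $\|N\| \leq L$ directly from the subgradient inequality; this is shorter, gives the uniform bound in one step, and does not rely on (i) or (ii). One correction is needed in your write-up of (iv): the perturbation must go \emph{into} the cone in the direction opposing $N$, i.e. take $A = B - \rho P$ with $\|P\|=1$ and $\sum_{\alpha,\beta} N_{\alpha\beta}P_{\alpha\beta} = \|N\|$, so that \eqref{F} yields $-\rho\|N\| = \sum_{\alpha,\beta} N_{\alpha\beta}(A_{\alpha\beta}-B_{\alpha\beta}) \geq F_\infty(A) - F_\infty(B) \geq -L\rho$, hence $\|N\| \leq L$. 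As written, your final chain takes $A = B + \rho P$ (since you equate $\sum_{\alpha,\beta} N_{\alpha\beta}(B_{\alpha\beta}-A_{\alpha\beta})$ with $-\rho\|N\|$) and then asserts $\sum_{\alpha,\beta} N_{\alpha\beta}(B_{\alpha\beta}-A_{\alpha\beta}) \geq F_\infty(B) - F_\infty(A)$, which is the reverse of what \eqref{F} gives; with that choice of $A$ one only recovers the trivial lower bound you had already discarded. This is a sign-bookkeeping slip rather than a gap in the idea, and with the corrected choice of $A$ your argument is complete.
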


\begin{proof}
(i) The positive semi-definiteness of $N \in \mathcal{N}_\infty(B)$ follows from the monotonicity of $F_\infty$ and from applying \eqref{F} to $A = B + \xi \otimes \xi$ for arbitrary $\xi \in \mathbb{R}^{n-1}$.
 
(ii)
From \eqref{F}, we have 
\begin{align*}
\sum_{\alpha,\beta} N_{\alpha \beta}  (C_{\alpha \beta} - B_{\alpha \beta}) \geq F_{\infty} (C) - F_\infty (B),\\
\sum_{\alpha,\beta}K_{\alpha \beta}  (B_{\alpha \beta} - C_{\alpha \beta}) \geq F_{\infty} (B) - F_\infty (C).
\end{align*}
Summing up, we obtain the anti-monotonicity of $\mathcal{N}_\infty$.

(iii) The direction ``$\geq$'' of \eqref{Eq:T3} follows by taking $B = A$, while the direction ``$\leq$'' of \eqref{Eq:T3} follows from \eqref{F}.

(iv) Consider first the case that $\mathcal{K}$ contains only one element, say $B$. Fix some small $\varepsilon > 0$ such that $A = B - \varepsilon I \in \mathcal{U}$. We then see from \eqref{F} that
\[
\textrm{tr}(N) \leq \varepsilon^{-1}(F_\infty(B) - F_\infty(B - \varepsilon I)) \text{ for all } N \in \mathcal{N}_\infty(B).
\]
Recalling that $N$ is positive semi-definite, we deduce that $\mathcal{N}_\infty(B)$ is bounded, i.e. the conclusion holds when $\mathcal{K}$ is a singleton set.

Consider the general case. Clearly, $\mathcal{K}$ is covered by $\cup_{A\in \mathcal{U}} (A + \mathcal{P})$ where $\mathcal{P}$ is the set of positive definite symmetric matrices. By compactness, there exist $A_1, \ldots, A_j$ such that $\mathcal{K}$ is covered by $\cup_i (A_i + \mathcal{P})$. By anti-monotonicity of $\mathcal{N}_\infty$, we have that, for any $B \in \mathcal{K}$ and $N \in \mathcal{N}_\infty(B)$, there exist some $i$ and some $K \in \mathcal{N}_\infty(A_i)$ such that $B > A_i$ and $N \leq K$. Since each $\mathcal{N}_\infty(A_i)$ is bounded, the conclusion follows.
\end{proof}

Given $\sigma > 0$, let $\Gamma_t^\sigma = \{\lambda \in \Gamma_t: f_t(\lambda) > \sigma\}$.
By our assumptions of $f$, the level set $\partial \Gamma_t^\sigma = \{\lambda \in \Gamma_t: f_t(\lambda) = \sigma\}$
is a smooth and convex non-compact complete hypersurface in $\mathbb{R}^n$.
For $\lambda \in \Gamma_t$, let
\[
\nu_{t,\lambda} := \frac{D f_t(\lambda)}{|D f_t(\lambda)|}
\]
denote the unit normal vector to $\partial \Gamma_t^{f(\lambda)}$ at $\lambda$.

The following lemma is a variant of the important Lemma 1.9 in \cite{Guan18}.

\begin{lemma}
\label{guan}
Suppose that $(f, \Gamma)$ is as in Theorem \ref{thm}. 
Given a compact set $K \subset \Gamma$
and $\beta >0$ sufficiently small, there exists a constant $\varepsilon >0$ depending only on $\beta$, $K$ and $(f, \Gamma)$ such that for any $\mu \in K$ and $\lambda \in \Gamma_t$
with $| \nu_{t,\mu} - \nu_{t,\lambda} | > \beta$ for some $t \in [0,1]$,
\[
D f_t(\lambda) \cdot (\mu  - \lambda ) \geq f_t(\mu) - f_t(\lambda) + \varepsilon \sum_i \partial_{i}f_t(\lambda) + \varepsilon.
\]
\end{lemma}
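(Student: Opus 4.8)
The plan is to follow the strategy of Lemma 1.9 in \cite{Guan18}, with the key point being to track the constants so that the resulting $\varepsilon$ is uniform in $t \in [0,1]$. First I would reduce to the range $t$ bounded away from $1$ and the range $t$ close to $1$ separately, using Lemma \ref{Lem:GtBall} to handle the latter. Concretely, fix $\beta > 0$ small and a compact set $K \subset \Gamma$. Using the homogeneity \eqref{f4} of $f$, by rescaling we may assume that all elements of $K$ lie on a fixed level set $\{f = 1\}$, and we may also assume $f_t(\lambda) = 1$ after rescaling $\lambda$ (note $\nu_{t,\lambda}$ is scale-invariant). Since $\Gamma_t^1 = \{f_t = 1\}$ is a smooth complete convex hypersurface, the hypothesis $|\nu_{t,\mu} - \nu_{t,\lambda}| > \beta$ says that the outward normals at $\mu$ and $\lambda$ differ by a definite amount. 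By convexity of $\Gamma_t^\sigma$, the supporting hyperplane at $\lambda$ with normal $\nu_{t,\lambda}$ separates $\lambda$ from $\mu$, so $\nu_{t,\lambda} \cdot (\mu - \lambda) \leq 0$ always; the quantitative gain comes from the curvature: moving from the point with normal $\nu_{t,\lambda}$ to the point with normal $\nu_{t,\mu}$ along the hypersurface forces one to travel a definite distance, which translates (via convexity) into a strict inequality
\[
\nu_{t,\lambda} \cdot (\mu - \lambda) \leq - c(\beta, K, t)
\]
for some positive $c$. Since $Df_t(\lambda) = |Df_t(\lambda)|\, \nu_{t,\lambda}$ and $f_t$ is homogeneous of degree one, $Df_t(\lambda)\cdot\lambda = f_t(\lambda)$ and $Df_t(\lambda)\cdot\mu \leq f_t(\mu)$ by concavity, so $Df_t(\lambda)\cdot(\mu-\lambda) \leq f_t(\mu) - f_t(\lambda)$; the curvature estimate upgrades this to
\[
Df_t(\lambda)\cdot(\mu-\lambda) \leq f_t(\mu) - f_t(\lambda) - |Df_t(\lambda)|\, c(\beta,K,t).
\]
Wait — the direction of the desired inequality is the reverse of what convexity gives directly, so in fact the correct reading of the lemma is that it is the concavity slack that we want to \emph{lower bound}; I would instead argue as follows. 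By concavity of $f_t$, $f_t(\mu) \leq f_t(\lambda) + Df_t(\lambda)\cdot(\mu - \lambda)$, i.e. $Df_t(\lambda)\cdot(\mu-\lambda) \geq f_t(\mu) - f_t(\lambda)$; the content of the lemma is that when the normals differ, this concavity inequality has a definite gap, quantified in terms of $\sum_i \partial_i f_t(\lambda)$. To get this gap, I would use strict concavity of $f_t$ transverse to the radial direction: since $f_t$ is homogeneous of degree one and smooth with $Df_t \neq 0$, its restriction to the level set is strictly convex as a graph in directions where the Hessian is nondegenerate, and the normal gap $\beta$ gives a lower bound on $|\mu - \lambda|$ (after normalizing both to the level set $\{f_t = 1\}$), which then feeds into a second-order Taylor estimate
\[
f_t(\mu) \leq f_t(\lambda) + Df_t(\lambda)\cdot(\mu-\lambda) - \tfrac12 \theta(\beta, K, t)\,|\mu - \lambda|^2 + \ldots,
\]
yielding the $\varepsilon$ term after comparison with $\sum_i \partial_i f_t(\lambda)$.

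The crux is to show the constants $c$, $\theta$, and ultimately $\varepsilon$ can be chosen independent of $t$. For $t$ in a compact subinterval $[0, 1-\delta]$, the families $f_t$ and $\Gamma_t$ depend continuously on $t$ and $K$ stays in a fixed compact subset of $\Gamma_t$ (since $\Gamma \subset \Gamma_t$ for $t \le 1$... actually $\Gamma_t \supset \Gamma$ need not hold; rather the inclusion relevant here is that $f_t \geq f$ on $\Gamma$, and $K \subset \Gamma \subset \Gamma_t$ follows from $f_t(\lambda) = f(t\lambda + (1-t)\sigma_1(\lambda)e)$ being defined and positive on $\Gamma$ by \eqref{f3}, \eqref{f4}, \eqref{f6}). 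So for $t \le 1-\delta$ one gets a uniform $\varepsilon$ by a routine compactness argument in $(t,\mu)$. The genuinely delicate regime is $t \nearrow 1$, where $\Gamma_t$ may be degenerating (changing type) and $Df_t$ near $\partial\Gamma_t$ may blow up or vanish. Here I would invoke Lemma \ref{Lem:GtBall}: for $t$ close to $1$, the ball $B_{(1-t)/(2n)}((0,\ldots,0,1))$ lies in $\Gamma_t$ with $f_t \geq \tfrac12(1-t)f(e)$ there, which pins down the geometry of the level sets near a fixed reference point and, combined with concavity \eqref{f3} and \eqref{f5}, gives uniform two-sided control on $|Df_t(\lambda)|$ and on the modulus of concavity for $\lambda \in K$ and, after normalization, for $\lambda \in \Gamma_t$ with bounded $f_t$. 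The main obstacle I anticipate is precisely this uniform control of the second-order behaviour of $f_t$ as $t \nearrow 1$: one needs that the "curvature" of the level sets $\partial\Gamma_t^\sigma$ near $K$ does not degenerate, which is where the specific structure of $(f_t, \Gamma_t)$—and not just abstract concavity—must be used, exactly as the remark after Theorem \ref{thm} warns.

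Once the uniform normal-gap-to-distance estimate and the uniform modulus of concavity are in hand, the rest is bookkeeping: one writes, for normalized $\mu, \lambda$ on $\{f_t = 1\}$,
\[
Df_t(\lambda)\cdot(\mu - \lambda) - (f_t(\mu) - f_t(\lambda)) \geq \theta\,|\mu-\lambda|^2 \geq \theta\, c(\beta)^2 =: 2\varepsilon_0,
\]
and then absorbs the term $\varepsilon\sum_i \partial_i f_t(\lambda) + \varepsilon$ by noting that $\sum_i \partial_i f_t(\lambda)$ is bounded above for $\lambda$ on the normalized level set (again by concavity and \eqref{f5}, uniformly in $t$ via Lemma \ref{Lem:GtBall}), so choosing $\varepsilon$ a small enough multiple of $\varepsilon_0$ finishes the proof. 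Finally one undoes the normalization using homogeneity, checking that both sides of the claimed inequality scale the same way under $\lambda \mapsto s\lambda$ and $\mu \mapsto s'\mu$ up to the harmless replacement of $\varepsilon$ by $\varepsilon$ times the relevant scaling factors, which are bounded on $K$ and on the range of $f_t(\lambda)$ of interest in the application.
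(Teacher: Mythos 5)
Your proposal does not follow the paper's route, and as written it has two genuine gaps.

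First, your central mechanism is a uniform modulus of strict concavity of $f_t$ (equivalently, uniform convexity of the level sets $\partial\Gamma_t^\sigma$), feeding a normal gap $|\nu_{t,\mu}-\nu_{t,\lambda}|>\beta$ into a lower bound on $|\mu-\lambda|$ and then into a second--order Taylor gap $\tfrac12\theta|\mu-\lambda|^2$. Under \eqref{f1}--\eqref{f4} no such $\theta>0$ exists: the level sets may have flat directions, and the problem is not confined to $t\nearrow 1$. Indeed $f_0(\lambda)=f(\sigma_1(\lambda)e)=\tfrac{1}{n}\sigma_1(\lambda)f(e)$ is \emph{linear}, so $\theta(\beta,K,t)\to 0$ as $t\to 0$ while the hypothesis of the lemma is non-vacuous for small $t>0$. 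Your compactness argument on $[0,1-\delta]$ therefore does not close, and the part of your plan devoted to $t\nearrow 1$ (via Lemma \ref{Lem:GtBall}) addresses a difficulty that is not where the actual difficulty lies. Second, your final absorption step asserts that $\sum_i\partial_i f_t(\lambda)$ is bounded above for $\lambda$ on the normalized level set $\{f_t=1\}$. This is false (e.g.\ for $f=\sigma_n^{1/n}$, $\partial_if=\tfrac1n f/\lambda_i$ blows up on $\{f=1\}$ as $\lambda_i\to0$), and it is precisely why the term $\varepsilon\sum_i\partial_i f_t(\lambda)$ appears on the right-hand side of the lemma rather than being subsumed into the constant $\varepsilon$; your argument produces no mechanism for generating that term.

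The paper's proof avoids both issues by never using second--order information at $\lambda$ (which may be deep in $\Gamma_t$ or near $\partial\Gamma_t$) and never using curvature of level sets. The only Taylor expansion is of $f_t$ at points of the fixed compact set $K$, where the $C^2$ norms of $f_t$ are bounded uniformly in $t$ because $\lambda\mapsto t\lambda+(1-t)\sigma_1(\lambda)e$ maps a neighborhood of $K$ into a fixed compact subset of $\Gamma$ --- this is the entire source of the $t$-uniformity, and Lemma \ref{Lem:GtBall} is not needed. The normal gap is exploited purely at first order: in the plane spanned by $\nu_{t,\mu}$ and $\nu_{t,\lambda}$ one finds a unit vector $v\perp\nu_{t,\lambda}$ with $v\cdot\nu_{t,\mu}\geq\delta(\beta)>0$, so that concavity at $\lambda$ applied to $\mu+r_0v$ costs nothing on the left (since $Df_t(\lambda)\cdot v=0$) but gains $f_t(\mu+r_0v)-f_t(\mu)\geq \delta r_0/C$ on the right, producing the additive $\varepsilon$; a second unit vector $\tilde v\perp\nu_{t,\mu}$ with $\tilde v\cdot\nu_{t,\lambda}\leq-\delta(\beta)$ produces, by the same device, a term $\tfrac{r}{C}|Df_t(\lambda)|\gtrsim\sum_i\partial_if_t(\lambda)$ at the cost of only $Cr^2$ (controlled by the Taylor expansion at $\mu\in K$). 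Combining the two inequalities gives the lemma. I recommend you rework your argument along these lines.
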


\begin{proof} We adapt the proof of \cite[Lemma 1.9]{Guan18}. In the proof $C$ denotes a constant which varies from lines to lines but depends only on $\beta, K$ and $(f,\Gamma)$. We will assume throughout that $t \in [0,1]$ and $\beta < \sqrt{2}$.

For $\mu \in K$, let $X_{t,\mu}$ denote the set of $\lambda \in \Gamma_t$ such that $|\nu_{t,\mu} - \nu_{t,\lambda}| > \beta$.

By Taylor's theorem and the compactness of $K$, we have for all $t, r \in [0,1]$, $\mu \in K$ and $v \in \mathbb{S}^{n-1}$ that
\begin{equation}
\Big|f_t(\mu + r v) - f_t(\mu) - r |Df_t(\mu)|   \nu_{t,\mu} \cdot v \Big| \leq Cr^2.
\label{Eq:g0}
\end{equation}
In particular, by \eqref{f2} and \eqref{f5}, for every $\delta > 0$, there exists $r_0 \in (0,1)$ depending only on $K$, $\delta$ and $(f,\Gamma)$ such that for all $r \in [0,r_0]$, $\mu \in K$ and $v \in \mathbb{S}^{n-1}$ satisfying $\nu_{t,\mu} \cdot v \geq \delta$ it holds that
\begin{equation}
f_t(\mu + r v) - f_t(\mu) \geq \frac{  \delta r}{C}.
\label{Eq:g1}
\end{equation}

Now observe that, if $\lambda \in X_{t,\mu}$, then \eqref{f3} implies that
\[
0 < \nu_{t,\mu} \cdot \nu_{t,\lambda} = 1 - \frac{1}{2}|\nu_{t,\mu} - \nu_{t,\lambda}|^2 < 1 - \frac{1}{2} \beta^2.
\]
Thus, if we define a unit vector $v_{t,\mu,\lambda}$ by
\[
v_{t,\mu,\lambda} := \cos \alpha \nu_{t,\mu} - \sin \alpha \nu_{t,\lambda} \text{ with }  \alpha = \arctan(\nu_{t,\mu} \cdot \nu_{t,\lambda}) \in (0,\arctan(1 - \frac{1}{2} \beta^2)),
\]
then $v_{t,\mu,\lambda} \cdot \nu_{t,\lambda} = 0$ and
\[
 \nu_{t,\mu} \cdot v_{t,\mu,\lambda} = \cos\alpha - \sin \alpha \tan \alpha = \frac{1 - \tan^2\alpha}{(1 + \tan^2\alpha)^{1/2}} \geq \frac{\beta^2(4- \beta^2)}{2(\beta^4 - 4\beta^2 + 8)^{1/2}} > 0.
\]
Therefore, with the choice $\delta = \frac{\beta^2(4- \beta^2)}{2(\beta^4 - 4\beta^2 + 8)^{1/2}}$ in \eqref{Eq:g1}, we obtain from the concavity of $f_t$ that
\begin{align}
D f_t(\lambda) \cdot (\mu  - \lambda ) 
	&= D f_t(\lambda) \cdot (\mu + r_0v_{t,\mu,\lambda}  - \lambda )
		\geq f_t(\mu + r_0v_{t,\mu,\lambda}) - f_t(\lambda)  \nonumber\\
	& \geq f_t(\mu) - f_t(\lambda) + \frac{1}{C}
	\text{ for all }  \mu \in K, \lambda \in X_{t,\mu}.
	\label{Eq:g2}
\end{align}

Next, if we define  a unit vector $\tilde v_{t,\mu,\lambda}$ by
\[
\tilde v_{t,\mu,\lambda} :=   \sin \alpha \nu_{t,\mu} - \cos \alpha \nu_{t,\lambda},
\]
then $\tilde v_{t,\mu,\lambda} \cdot \nu_{t,\mu} = 0$ and
\[
 \nu_{t,\lambda} \cdot \tilde v_{t,\mu,\lambda} = - \cos\alpha + \sin \alpha \tan \alpha   \leq -\frac{\beta^2(4- \beta^2)}{2(\beta^4 - 4\beta^2 + 8)^{1/2}} < 0.
\]
This implies on one hand that
\[
Df_t(\lambda) \cdot \tilde v_{t,\mu,\lambda} \leq -\frac{1}{C} |Df_t(\lambda)|,
\]
and on the other hand (in view of \eqref{Eq:g0}) that
\[
\Big|f_t(\mu + r \tilde v_{t,\mu,\lambda}) - f_t(\mu) \Big| \leq Cr^2 \text{ for all }   r \in [0,1], \mu \in K, \lambda \in X_{t,\mu}.
\]
Hence, by the concavity of $f_t$,
\begin{align*}
Df_t(\lambda) \cdot (\mu - \lambda) 
	&\geq Df_t(\lambda) \cdot (\mu + r \tilde v_{t,\mu,\lambda} - \lambda) + \frac{r}{C} |Df_t(\lambda)| \nonumber\\
	&\geq f_t(\mu + r \tilde v_{t,\mu,\lambda}) - f_t(\lambda) + \frac{r}{C} |Df_t(\lambda)|\\
	&\geq f_t(\mu) - f_t(\lambda) + \frac{r}{C} |Df_t(\lambda)| - Cr^2 \text{ for all } r \in [0,1], \mu \in K, \lambda \in X_{t,\mu}.
\end{align*}
Together with \eqref{f3} and \eqref{f5}, we deduce that
\begin{equation}
Df_t(\lambda) \cdot (\mu - \lambda) 
	\geq f_t(\mu) - f_t(\lambda) + \frac{1}{C} \sum_i \partial_{i} f_t(\lambda)   \text{ for all }   \mu \in K, \lambda \in X_{t,\mu}.
	\label{Eq:g3}
\end{equation}
The conclusion follows from \eqref{Eq:g2} and \eqref{Eq:g3}.
\end{proof}

Let $W = \{W_{ij}\} $ be an $n \times n$ symmetric matrix with $\lambda (W) \in \Gamma$.
Define $F_t(W) = f_t (\lambda(W))$ and $F_t^{ij}(W) = \frac{\partial F_t}{\partial W_{ij}} (W)$. Note that 
\begin{equation}\label{FWW}
\sum_l F_t^{ij}(W) W_{il} W_{jl}  = \sum_i \partial_i f_t(\lambda) \lambda_i^2,
\end{equation}
where $\lambda=(\lambda_1, \ldots, \lambda_n)$ are the eigenvalues of $W$.
The following lemma is also needed.

\begin{lemma}\cite[Proposition 2.19]{Guan14}
	\label{guan3}
	Suppose that $(f, \Gamma)$ is as in Theorem \ref{thm}. 
	Let $t \in [0,1]$. There is an index $r$ such that
	\[
	\sum_{\ell < n} F_t^{ij}(W) W_{i\ell} W_{j\ell} \geq \frac{1}{2} \sum_{i\neq r} \partial_i f_t(\lambda) \lambda_i^2.
	\]
\end{lemma}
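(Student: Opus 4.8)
The plan is to reduce the inequality to an elementary fact about the eigenvalues of $W$ and the components, in an orthonormal eigenbasis of $W$, of the $n$-th coordinate vector $e_n \in \RR^n$. The one point that needs care is that the restricted sum $\sum_{\ell < n}$ singles out the fixed direction $e_n$, so one cannot diagonalize $W$ at the outset; instead I would first use the identity $\sum_{\ell=1}^{n-1} W_{i\ell} W_{j\ell} = (W^2)_{ij} - W_{in} W_{jn}$ to rewrite the left-hand side in the conjugation-invariant form
\[
\sum_{\ell < n} F_t^{ij}(W)\, W_{i\ell} W_{j\ell} = \sum_{i,j} F_t^{ij}(W)\, (W^2)_{ij} \;-\; \sum_{i,j} F_t^{ij}(W)\, W_{in} W_{jn},
\]
and note that, by \eqref{FWW}, the first term on the right equals $\sum_i \partial_i f_t(\lambda)\, \lambda_i^2$.

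For the second term I would use the standard fact that, for the symmetric function $f_t$ and an orthonormal eigenbasis $v_1, \dots, v_n$ of $W$ with $W v_k = \lambda_k v_k$, one has $\sum_{i,j} F_t^{ij}(W)\, \eta_i \zeta_j = \sum_k \partial_k f_t(\lambda)\, \langle \eta, v_k\rangle \langle \zeta, v_k \rangle$ for all $\eta, \zeta \in \RR^n$; the case of coincident eigenvalues is covered because symmetry of $f_t$ forces $\partial_i f_t(\lambda) = \partial_j f_t(\lambda)$ whenever $\lambda_i = \lambda_j$. Applying this with $\eta = \zeta = W e_n$ and using $\langle W e_n, v_k\rangle = \langle e_n, W v_k\rangle = \lambda_k \langle e_n, v_k\rangle$, I would obtain
\[
\sum_{i,j} F_t^{ij}(W)\, W_{in} W_{jn} = \sum_k \partial_k f_t(\lambda)\, \lambda_k^2\, c_k^2, \qquad c_k := \langle e_n, v_k \rangle,
\]
where $\sum_k c_k^2 = |e_n|^2 = 1$.

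Combining the two computations, the left-hand side of the claimed inequality equals $\sum_k \partial_k f_t(\lambda)\, \lambda_k^2\, (1 - c_k^2)$. By \eqref{f2} (together with \eqref{f5}) each $\partial_k f_t(\lambda)$ is positive, so the numbers $a_k := \partial_k f_t(\lambda)\, \lambda_k^2$ are nonnegative; I would then choose $r$ to maximize $a_k$, and since $c_k^2 \le 1$ and $\sum_k c_k^2 = 1$ we get $\sum_k a_k c_k^2 \le a_r \sum_k c_k^2 = a_r$, hence
\[
\sum_{\ell < n} F_t^{ij}(W)\, W_{i\ell} W_{j\ell} = \sum_k a_k - \sum_k a_k c_k^2 \;\ge\; \sum_{k \ne r} a_k \;\ge\; \frac{1}{2} \sum_{k \ne r} \partial_k f_t(\lambda)\, \lambda_k^2 ,
\]
which is the assertion (in fact with the constant $\frac12$ improvable to $1$). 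Since only \eqref{f2} for $f_t$ is used, and this holds on $\Gamma_t$ for every $t \in [0,1]$, the argument is automatically uniform in $t$. I expect the only genuine obstacle to be the bookkeeping above — rewriting the $e_n$-dependent sum in invariant form before passing to the eigenbasis, and handling coincident eigenvalues in the formula for $F_t^{ij}$ — and one may alternatively just invoke \cite[Proposition 2.19]{Guan14} after checking that $(f_t,\Gamma_t)$ satisfies its hypotheses.
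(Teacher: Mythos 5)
Your argument is correct. The paper does not prove this lemma itself but simply cites \cite[Proposition 2.19]{Guan14}, and your computation is essentially Guan's: rewrite the truncated sum as $\sum_{i,j}F_t^{ij}(W^2)_{ij}-\sum_{i,j}F_t^{ij}W_{in}W_{jn}$, pass to the eigenbasis to get $\sum_k \partial_k f_t(\lambda)\lambda_k^2(1-c_k^2)$ with $\sum_k c_k^2=1$, and discard one index. The only difference is cosmetic: Guan splits into the cases $\max_k c_k^2\le \tfrac12$ versus $c_r^2>\tfrac12$, which yields the factor $\tfrac12$, whereas your choice of $r$ maximizing $\partial_k f_t(\lambda)\lambda_k^2$ gives the same conclusion with constant $1$; either is fine, and your handling of coincident eigenvalues via the symmetry of $f_t$ is the standard justification.
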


To control $\sum_i \partial_i f_t(\lambda) |\lambda_i|$, 
we will frequently use the following inequality
\begin{equation}\label{f-lambda}
\sum_i \partial_i f_t(\lambda) |\lambda_i| \leq \varepsilon \sum_i \partial_i f_t (\lambda)\lambda_i^2 + \frac{1}{\varepsilon} \sum_i \partial_i f_t(\lambda)
\end{equation}
for any $\varepsilon > 0$,
which is obtained by Cauchy-Schwarz inequality. We will also need the following stronger inequality.
\begin{lemma}\cite[Corollary 2.21]{Guan14}
\label{guan2}
Suppose that $(f, \Gamma)$ is as in Theorem \ref{thm}. Let $t \in [0,1]$. 
For any $\lambda \in \Gamma$, index $r$ and $\varepsilon > 0$, we have
\[
\sum_i \partial_i f_t(\lambda) |\lambda_i| \leq \varepsilon \sum_{i \neq r} \partial_i f_t (\lambda)\lambda_i^2 + \frac{C}{\varepsilon} \sum_i \partial_i f_t(\lambda) + C,
\]
where $C$ depends only on $n$ and $f_t(\lambda)$.
\end{lemma}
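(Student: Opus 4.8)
The plan is to argue directly with the eigenvalues, using only the structural conditions \eqref{f1}, \eqref{f2} and \eqref{f4}; concavity \eqref{f3} will not be needed. The starting point is the elementary decomposition $|\lambda_i| = \lambda_i + 2\max\{-\lambda_i,0\}$ together with Euler's identity $\sum_i \partial_i f_t(\lambda)\,\lambda_i = f_t(\lambda)$, which holds because $f_t$ is homogeneous of degree one (this is the identity underlying \eqref{f5}). Combining these,
\[
\sum_i \partial_i f_t(\lambda)\,|\lambda_i| \;=\; f_t(\lambda) + 2\sum_{i:\,\lambda_i<0}\partial_i f_t(\lambda)\,|\lambda_i|,
\]
where we use that $\partial_i f_t(\lambda)>0$ for $\lambda\in\Gamma$ — which follows from \eqref{f2} via $\partial_i f_t(\lambda) = t\,\partial_i f(\mu) + (1-t)\sum_k\partial_k f(\mu)$ with $\mu = t\lambda + (1-t)\sigma_1(\lambda)e\in\Gamma$ — and that $f_t(\lambda)\ge f(\lambda)>0$. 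So the whole problem reduces to estimating the ``negative part'' $\sum_{i:\,\lambda_i<0}\partial_i f_t(\lambda)|\lambda_i|$ by the truncated quadratic sum $\sum_{i\neq r}\partial_i f_t(\lambda)\lambda_i^2$.

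The crux — and the only step requiring care — is the inequality
\[
\sum_{i:\,\lambda_i<0}\partial_i f_t(\lambda)\,|\lambda_i| \;\leq\; \sum_{i\neq r}\partial_i f_t(\lambda)\,|\lambda_i|, \qquad \text{valid for \emph{every} index } r.
\]
If $\lambda_r\geq 0$ this is immediate, since then $r$ indexes no term on the left. The delicate case is $\lambda_r<0$: a priori one fears that $r$ is exactly the index carrying the dominant term $\partial_r f_t(\lambda)|\lambda_r|$ (by concavity and symmetry, $\partial_i f_t$ tends to be largest where $\lambda_i$ is most negative), so discarding it looks dangerous. The point is that this term is already controlled through homogeneity: splitting it off on the left and applying Euler's identity once more,
\[
\partial_r f_t(\lambda)\,|\lambda_r| = -\partial_r f_t(\lambda)\,\lambda_r = \sum_{i\neq r}\partial_i f_t(\lambda)\,\lambda_i - f_t(\lambda) \;\leq\; \sum_{i\neq r,\;\lambda_i\geq 0}\partial_i f_t(\lambda)\,\lambda_i,
\]
where we dropped the negative term $-f_t(\lambda)$ and the nonpositive contributions of the indices $i\neq r$ with $\lambda_i<0$; adding those last terms back yields precisely $\sum_{i\neq r}\partial_i f_t(\lambda)|\lambda_i|$, proving the claim.

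Finally I would insert this into the plain Cauchy--Schwarz bound (the proof of \eqref{f-lambda}), applied over the index set $\{i\neq r\}$, namely $\sum_{i\neq r}\partial_i f_t(\lambda)|\lambda_i| \leq \varepsilon\sum_{i\neq r}\partial_i f_t(\lambda)\lambda_i^2 + \frac1\varepsilon\sum_i\partial_i f_t(\lambda)$. Together with the identity from the first paragraph this gives $\sum_i\partial_i f_t(\lambda)|\lambda_i| \leq f_t(\lambda) + 2\varepsilon\sum_{i\neq r}\partial_i f_t(\lambda)\lambda_i^2 + \frac2\varepsilon\sum_i\partial_i f_t(\lambda)$, and replacing $\varepsilon$ by $\varepsilon/2$ yields the stated inequality with $C = \max\{4,f_t(\lambda)\}$, depending only on $f_t(\lambda)$ (hence only on $n$ and $f_t(\lambda)$). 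Since every estimate above used only \eqref{f1}, \eqref{f2} and \eqref{f4}, all of which hold uniformly for $t\in[0,1]$, the bound is automatically independent of $t$; in effect this reproduces Guan's Corollary 2.21 in \cite{Guan14} and shows that his argument carries over verbatim to the whole family $(f_t,\Gamma_t)$.
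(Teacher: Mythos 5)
Your proof is correct, and it is worth noting that the paper itself offers no proof of this lemma: it is quoted from Guan's Corollary 2.21 in \cite{Guan14}, so a self-contained verification that the argument survives the passage to the family $(f_t,\Gamma_t)$ uniformly in $t$ is genuinely useful. Your route differs from Guan's in one interesting respect. Guan works without degree-one homogeneity and therefore controls $\sum_i \partial_i f(\lambda)\lambda_i$ via concavity (comparing with a fixed reference point); you instead exploit \eqref{f4}, so that Euler's identity $\sum_i \partial_i f_t(\lambda)\lambda_i = f_t(\lambda)$ holds exactly, and the whole lemma reduces to the clean combinatorial claim $\sum_{\lambda_i<0}\partial_i f_t(\lambda)|\lambda_i| \leq \sum_{i\neq r}\partial_i f_t(\lambda)|\lambda_i|$, whose delicate case $\lambda_r<0$ you dispatch by solving Euler's identity for the $r$-th term. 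I checked that step: $-\partial_r f_t(\lambda)\lambda_r = \sum_{i\neq r}\partial_i f_t(\lambda)\lambda_i - f_t(\lambda) \leq \sum_{i\neq r,\ \lambda_i\geq 0}\partial_i f_t(\lambda)\lambda_i$ is valid since $f_t>0$ on $\Gamma_t$ and the discarded terms are nonpositive, and reassembling gives the claim. Your identification $\partial_i f_t(\lambda) = t\,\partial_i f(\mu) + (1-t)\sum_k \partial_k f(\mu) > 0$ is also correct and is exactly what makes every constant independent of $t$. Two cosmetic remarks: the argument in fact works for all $\lambda \in \Gamma_t$ (not just $\Gamma \subset \Gamma_t$), which is the generality actually used later in the paper; and your final constant $C=\max\{4,f_t(\lambda)\}$ is even slightly sharper than stated, since it does not depend on $n$.
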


\subsection{Some estimates for the linearized operator}\label{ssec22}
We next gather estimates for the linearized operator when acting on various auxiliary functions which we will use later on. We begin by introducing some notations which will be frequently used. Let $d(x)$ be the distance function from $x\in M$ to the boundary $\partial M$.
For $x_0 \in \partial M$, let $\rho(x) = d (x, x_0)$ be the distance from $x$ to $x_0$ and
\[
\Omega_{\delta, x_0} = \{x \in M: \rho(x) < \delta\}
\]
be a $\delta$-neighborhood of $x_0$.
Around a point $x \in \partial M$, we shall always choose smooth orthonormal local frame $e_1, \ldots, e_n$
by parallel transporting a local orthonormal frame $e_1, \ldots, e_{n-1}$ on $\partial M$ and the inward unit normal $e_n$
to $\partial M$ along geodesics perpendicular to $\partial M$.
Under a local orthonormal frame $e_1, \ldots, e_n$ on $M$, we denote $\nabla_i = \nabla_{e_i}$ and
$\nabla_{ij} = \nabla_{i} \nabla_{j}$. For a given solution $u_t$ of \eqref{eqn}, let us introduce the following notations:
\begin{align*}
W^t[u_t] 
	&:= t W[u_t] + (1-t) \textrm{tr}(g^{-1}W[u_t]) g, \\
F_t(W[u_t])
	&:= f_t (\lambda_g (W[u_t])) = f(\lambda_g(W^t[u_t])),\\
F_t^{ij}
	&:= \frac{\partial F_t}{\partial W_{ij}} (W[u_t]),\\
\cT_t
	&:= \sum_i F_t^{ii}.
\end{align*}
We will also use $\lambda^{(t)} = (\lambda^{(t)}_1, \ldots, \lambda^{(t)}_n)$ and $\eta^{(t)} = (\eta^{(t)}_1, \ldots, \eta^{(t)}_n)$ to denote the eigenvalues of $W[u_t]$ and $W^t[u_t]$ with respect to $g$, both arranged in ascending order. For $v\in C^2 (M)$, the linearized operator of equation \eqref{eqn} at $u_t$ is defined as
\[
\mathcal{L}_t v := F_t^{ij} \Big( \nabla_{ij} v + \nabla_i u_t \nabla_j v + \nabla_j u_t \nabla_i v - \sum_l \nabla_l u_t \nabla_l v \delta_{ij} \Big),
\]
where Einstein summation convention is used whenever the same index is repeated in a subscript and a superscript positions.

\begin{lemma}\label{Lem:bbb}
Suppose that $(f, \Gamma)$, $u_t$ and $\underline{u}$ are as in Theorem \ref{thm}. Let $X$ be a smooth vector field on a compact set $K \subset M$. Then there exists $C > 0$ depending only on $\|X\|_{C^2(K)}$, $\|u_t\|_{C^1 (M)}, (M, g)$, $(f, \Gamma)$, $\underline{u}$ and $\psi$ such that
\begin{equation}
	\label{bbb}
	\mathcal{L}_t (X (u_t - \underline{u}))
	\leq C \Big(1 + \cT_t + \sum_i \partial_i f_t(\lambda^{(t)})|\lambda_i^{(t)}|\Big) \text{ in } K.
\end{equation}
\end{lemma}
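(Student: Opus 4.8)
The plan is to compute $\mathcal{L}_t(X(u_t - \underline{u}))$ directly by expanding derivatives, and then to control each resulting term using the structural facts on $(f_t, \Gamma_t)$ collected in Section~\ref{Sec2}. Write $w = u_t - \underline{u}$. Expanding,
\[
\mathcal{L}_t(Xw) = F_t^{ij}\Big(\nabla_{ij}(Xw) + \nabla_i u_t \nabla_j(Xw) + \nabla_j u_t \nabla_i(Xw) - \sum_l \nabla_l u_t \nabla_l(Xw)\,\delta_{ij}\Big),
\]
and the Leibniz rule produces three kinds of terms: (a) terms where both derivatives fall on $w$, i.e. $X \cdot \mathcal{L}_t w$ plus lower-order corrections; (b) terms with exactly one derivative on $w$ and one on $X$ (or on the coefficients of the frame), which are bounded by $C(1 + \mathcal{T}_t)\|\nabla w\|$; and (c) terms with no derivative on $w$, namely $w \cdot F_t^{ij}\nabla_{ij}X$ type expressions, bounded by $C(1 + \mathcal{T}_t)$. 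Since $\|u_t\|_{C^1(M)} \le C$ and $\|X\|_{C^2(K)}$, $\|\underline u\|_{C^2}$ are controlled, the terms in (b) and (c) are already of the form $C(1 + \mathcal{T}_t)$, which is absorbed into the right side of \eqref{bbb}. So the crux is the term $X \cdot \mathcal{L}_t w$.

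For that term I would estimate $\mathcal{L}_t w = \mathcal{L}_t u_t - \mathcal{L}_t \underline{u}$ separately. For $\mathcal{L}_t u_t$: using $W[u_t]_{ij} = \nabla_{ij}u_t + \nabla_i u_t\nabla_j u_t - \tfrac12|\nabla u_t|^2\delta_{ij} + (A_g)_{ij}$, one has $\mathcal{L}_t u_t = F_t^{ij}(W[u_t]_{ij} - (A_g)_{ij}) = F_t^{ij}W[u_t]_{ij} - F_t^{ij}(A_g)_{ij}$. By homogeneity of $f_t$ (Euler's relation), $F_t^{ij}W[u_t]_{ij} = \sum_i \partial_i f_t(\lambda^{(t)})\lambda_i^{(t)} = f_t(\lambda^{(t)}) = \psi(x, u_t)$, which is bounded; and $|F_t^{ij}(A_g)_{ij}| \le C\mathcal{T}_t$. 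For $\mathcal{L}_t\underline{u}$: one rewrites it in terms of $W[\underline u]$ using the identity $\nabla_i u_t\nabla_j\underline u + \nabla_j u_t\nabla_i\underline u - \sum_l\nabla_l u_t\nabla_l\underline u\,\delta_{ij}$ versus $\nabla_i\underline u\nabla_j\underline u - \tfrac12|\nabla\underline u|^2\delta_{ij}$; the difference involves only first derivatives and is bounded by $C\mathcal{T}_t$. Thus modulo such terms, $\mathcal{L}_t\underline u = F_t^{ij}(W[\underline u]_{ij} - (A_g)_{ij}) + O(\mathcal{T}_t) = F_t^{ij}W[\underline u]_{ij} + O(\mathcal{T}_t)$. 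Here I would use concavity of $f_t$: since $\lambda(W[\underline u])$ and $\lambda(W[u_t])$ lie in $\Gamma_t$,
\[
F_t^{ij}W[\underline u]_{ij} \le F_t^{ij}W[u_t]_{ij} + F_t(W[\underline u]) - F_t(W[u_t]) \le \psi(x,u_t) + f_t(\lambda(W[\underline u])),
\]
and since $f_t(\lambda(W[\underline u])) \ge f(\lambda(W[\underline u])) \ge \psi(x,\underline u) > 0$ one only needs a lower bound here; an upper bound on $f_t(\lambda(W[\underline u]))$ follows from $\underline u \in C^4$ and \eqref{f6}. Hence $-\mathcal{L}_t\underline u \le C(1 + \mathcal{T}_t)$.

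Collecting, $\mathcal{L}_t w = \mathcal{L}_t u_t - \mathcal{L}_t\underline u$ is bounded by $C(1 + \mathcal{T}_t + \sum_i\partial_i f_t(\lambda^{(t)})|\lambda_i^{(t)}|)$; the last term enters because some intermediate first-order expressions are controlled by $F_t^{ij}$ contracted against rank-one pieces, which via the Cauchy--Schwarz bound \eqref{f-lambda} is $\le \varepsilon\sum_i\partial_i f_t(\lambda^{(t)})(\lambda_i^{(t)})^2 + \varepsilon^{-1}\mathcal{T}_t$ — but here I would simply keep it as $\sum_i\partial_i f_t(\lambda^{(t)})|\lambda_i^{(t)}|$ as allowed by the statement rather than bounding further. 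Multiplying by the bounded factor $X$ and adding back the (b), (c) contributions completes \eqref{bbb}. The main obstacle is the careful bookkeeping in the Leibniz expansion to make sure no term worse than $\mathcal{T}_t$ or $\sum_i\partial_i f_t|\lambda_i^{(t)}|$ survives — in particular that the genuinely second-order-in-$w$ part reduces, via Euler's identity and concavity, to the bounded quantity $\psi$ plus admissible errors, uniformly in $t$. Note that uniformity in $t$ is automatic here: every constant depends only on bounds for $F_t^{ij}$ through $\mathcal{T}_t = \sum_i\partial_i f_t(\lambda^{(t)})$, which appears explicitly on the right, together with $\|u_t\|_{C^1}$, $\|\underline u\|_{C^4}$, $\psi$ and the fixed geometry, none of which degenerate as $t \nearrow 1$.
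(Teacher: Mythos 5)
There is a genuine gap, and it lies in the very first step of your decomposition. The function being differentiated is $X(u_t-\underline u)=X^l\nabla_l(u_t-\underline u)$, which already carries one derivative of $u_t$. Hence the term in $F_t^{ij}\nabla_{ij}\big(X(u_t-\underline u)\big)$ where ``both derivatives fall on $w$'' is $F_t^{ij}X^l\nabla_{ij}\nabla_l(u_t-\underline u)$, a \emph{third}-order derivative of $u_t$; it is not equal to $X$ times the scalar $\mathcal L_t(u_t-\underline u)$, and it cannot be bounded by estimating $\mathcal L_t(u_t-\underline u)$ and multiplying by $\sup|X|$. Your computation of $\mathcal L_t u_t$ via Euler's identity ($F_t^{ij}W_{ij}[u_t]=f_t(\lambda^{(t)})=\psi$) and of $\mathcal L_t\underline u$ via concavity is correct as far as it goes, but it bounds a quantity that does not occur in the expansion. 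The same miscount affects your class (b): a single derivative landing on the factor $\nabla_l(u_t-\underline u)$ produces $\nabla_{jl}(u_t-\underline u)$, an unbounded second derivative, so those terms are not ``$C(1+\mathcal T_t)\|\nabla w\|$'' either.

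The missing idea, which is the heart of the paper's proof, is to differentiate the \emph{equation} along $X$. After commuting derivatives (the paper's identity \eqref{formula}), the third-order block combines into $F_t^{ij}X\big(\nabla^2u_t(e_i,e_j)\big)=F_t^{ij}X(W_{ij}[u_t])-F_t^{ij}X(\text{lower order})=X\big(\psi(x,u_t)\big)-F_t^{ij}X(\text{lower order})$, and $X(\psi(x,u_t))$ is bounded by the $C^1$ bound on $u_t$. What survives are terms in which $F_t^{ij}$ is contracted exactly once against $\nabla^2u_t$ (for instance $F_t^{ij}\nabla^2u_t(e_i,[e_j,X])$, and $F_t^{ij}\nabla_{il}u_t\,\nabla_ju_t\,X^l$ coming from differentiating the quadratic gradient terms in $W$); these are controlled by the orthogonal-invariance identity $|F_t^{ij}(W)W_{j\ell}|\le C\sum_i\partial_if_t(\lambda)|\lambda_i|$, which is precisely the source of the term $\sum_i\partial_if_t(\lambda^{(t)})|\lambda_i^{(t)}|$ on the right of \eqref{bbb} --- not the Cauchy--Schwarz step \eqref{f-lambda} you invoke. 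Without the differentiated-equation step your argument cannot close, since nothing else in your proposal removes the third derivatives of $u_t$.
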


\begin{proof}
Note that, by the product rule and
commuting derivatives,
\begin{multline}
	\label{formula}	\nabla_{ij} (X(u_t))
		= \nabla_{ij} g (\nabla u_t, X)  \\
		 =  X ( \nabla^2 u_t ( e_i, e_j ) ) +  \nabla^2 u_t ([e_i, X], e_j) + \nabla^2 u_t (e_i, [e_j, X]) +  g(\nabla u_t, T_{ij}(X)),
\end{multline}
where
\[
T_{ij}(X) = \nabla_{[e_i, X]} e_j + \nabla_{e_i} [e_j, X] + [X, \nabla_{e_i} e_j].
\] 
Hence, since $F_t^{ij} X(W_{ij} [u_t]) = X(\psi (x, u_t))$, we have
\begin{equation}
\mathcal{L}_t (X(u_t)) \leq C \big(1+ \cT_t \big) + 2   F_t^{ij} \nabla^2 u_t (e_i, [e_j, X]).
	\label{LtXut}
\end{equation}

To proceed, we note that, by the chain rule and the fact that 
$F_t(W) = F_t (AWA^T)$
for any constant orthogonal matrix $A$,
\[
F_t^{ij}(W)   =   A_{pi} F_t^{pq} (AWA^T) A_{qj}
\]
and 
\[
F_t^{ij}(W)  W_{j \ell} = \sum_j A_{pi} F_t^{pq} (AWA^T) A_{qj} W_{j \ell}
= \sum_s A_{pi} F_t^{pq}(AWA^T) [AWA^T]_{qs} A_{s \ell}.
\]
In particular, if $A$ is the orthogonal matrix so that $AWA^T = \text{diag} (\lambda_1, \dots, \lambda_n)$, then $F_t^{pq} (AWA^T) = \partial_p f_t(\lambda) \delta_{pq}$,
and the above identity implies
\[
|F_t^{ij} (W) W_{j \ell}| \leq C \sum_i \partial_i f_t(\lambda) |\lambda_i|.
\]
It follows that
\begin{equation}
	\label{b}
	F_t^{ij} \nabla^2 u_t (e_i, [e_j, X] ) \leq C \cT_t + C \sum_i \partial_i f_t(\lambda^{(t)})|\lambda^{(t)}_i|.
\end{equation}
Returning to \eqref{LtXut}, we deduce
\begin{equation}
	\mathcal{L}_t (X (u_t))
	\leq C \Big(1 + \cT_t + \sum_i \partial_i f_t(\lambda^{(t)})|\lambda_i^{(t)}|\Big).\label{bbbx}
\end{equation}
The conclusion is readily seen.
\end{proof}

\begin{lemma}
\label{lem:LtNu2}
Suppose that $(f, \Gamma)$, $u_t$ and $\underline{u}$ are as in Theorem \ref{thm}. Then there exists $C > 0$ depending only on $\|u_t\|_{C^1 (M)}, (M, g)$, $(f, \Gamma)$,  $\underline{u}$ and $\psi$ such that
\begin{equation}\label{LDu^2}
	\mathcal{L}_t (|\nabla u_t |^2 - |\nabla \underline{u} |^2)
	\geq - C \Big( 1+ \cT_t + \sum_i \partial_i f_t(\lambda^{(t)})|\lambda^{(t)}_i| \Big)  + \sum_i \partial_i f_t(\lambda^{(t)}) (\lambda^{(t)}_i)^2.
\end{equation}
\end{lemma}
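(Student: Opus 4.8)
The plan is to compute $\mathcal{L}_t(|\nabla u_t|^2)$ directly using the Bochner-type formula and then control the resulting terms. First I would differentiate the equation $F_t(W[u_t]) = \psi(x,u_t)$ twice in a direction $e_k$ to get, after summing over $k$, an expression of the form $F_t^{ij}\nabla_k(W_{ij}[u_t]) = \nabla_k\psi$ and $F_t^{ij}\nabla_{kk}(W_{ij}[u_t]) + F_t^{ij,pq}\nabla_k(W_{ij})\nabla_k(W_{pq}) = \nabla_{kk}\psi$. The concavity of $f_t$ makes the second-derivative term $F_t^{ij,pq}\nabla_k W_{ij}\nabla_k W_{pq} \leq 0$, which is the key to getting an inequality in the right direction. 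Expanding $\nabla_{kk}W_{ij}[u_t]$ via the definition $W[u_t] = \nabla^2 u_t + du\otimes du - \frac12|\nabla u|^2 g + A_g$ and commuting covariant derivatives (which introduces curvature terms bounded by $C(1+\mathcal{T}_t + \sum_i\partial_i f_t|\lambda_i^{(t)}|)$ as in Lemma \ref{Lem:bbb}), the leading term one keeps is $\sum_k F_t^{ij}\nabla_{ij}(\nabla_k u_t)\nabla_k u_t$, which after reorganizing produces $\mathcal{L}_t(|\nabla u_t|^2)$ together with a gradient-squared-of-Hessian term $F_t^{ij}\sum_k \nabla_{ik}u_t\nabla_{jk}u_t$ with a good sign.

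The second main step is to recognize and extract the term $\sum_i \partial_i f_t(\lambda^{(t)})(\lambda_i^{(t)})^2$. After diagonalizing $W[u_t]$ at the point in question, one has $F_t^{ij}\sum_\ell \nabla_{i\ell}u_t\nabla_{j\ell}u_t$; since $W_{ij} = \nabla_{ij}u_t + (\text{lower order in }\nabla u_t, A_g)$, we can write $\nabla_{ij}u_t = \lambda_i^{(t)}\delta_{ij} + O(1 + |\nabla u_t|^2)$, so $F_t^{ij}\nabla_{i\ell}u_t\nabla_{j\ell}u_t = \sum_i \partial_i f_t(\lambda^{(t)})(\lambda_i^{(t)})^2 + O\big((1+\mathcal{T}_t)(1 + \sum_i\partial_i f_t|\lambda_i^{(t)}|)\big)$ by Cauchy-Schwarz, and the error is absorbed into the right side using \eqref{f-lambda}. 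This is exactly the term appearing on the right of \eqref{LDu^2}. I would then subtract $\mathcal{L}_t(|\nabla\underline{u}|^2)$, which by the same product-rule/commutation bookkeeping satisfies $|\mathcal{L}_t(|\nabla\underline{u}|^2)| \leq C(1+\mathcal{T}_t + \sum_i\partial_i f_t|\lambda_i^{(t)}|)$ since $\underline{u}$ is a fixed $C^4$ function, completing the inequality.

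The main obstacle I anticipate is the careful bookkeeping of all the lower-order terms generated by (a) differentiating the nonlinear terms $du\otimes du$ and $|\nabla u|^2 g$ in $W[u_t]$, (b) commuting third covariant derivatives of $u_t$ past one another, which brings in the Riemann curvature contracted against $\nabla^3 u_t$ — these are a priori \emph{cubic} in derivatives and must be shown to be of the form $F_t^{ij}(\text{curvature})_{ij}{}^{k\ell}\nabla_{k\ell}u_t$ plus controllable pieces, i.e. bounded by $C(1+\mathcal{T}_t)(1+\sum_i\partial_i f_t|\lambda_i^{(t)}|)$ after using $|F_t^{ij}W_{j\ell}| \leq C\sum_i\partial_i f_t|\lambda_i|$ from the proof of Lemma \ref{Lem:bbb} — and (c) the mismatch between $\lambda_i^{(t)}$ (eigenvalues of $W[u_t]$) and $\eta_i^{(t)}$ (eigenvalues of $W^t[u_t]$), since $F_t^{ij}$ is naturally expressed via $\eta^{(t)}$; here one uses $F_t^{ij}W_{j\ell}[u_t]$ directly rather than passing through eigenvalues, or notes $\sum_i\partial_i f_t(\lambda^{(t)})(\lambda_i^{(t)})^2 = \sum_l F_t^{ij}W_{il}[u_t]W_{jl}[u_t]$ via \eqref{FWW}. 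None of these steps is conceptually deep, but the proof must be organized so that every error term is visibly of the claimed form, and in particular uniform in $t \in [0,1]$, which is automatic here because all constants come from $\|u_t\|_{C^1}$, $(M,g)$, $(f,\Gamma)$, $\underline{u}$, $\psi$ and the algebraic identities above, none of which degenerate as $t\nearrow 1$.
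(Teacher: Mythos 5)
Your overall route is the paper's: expand $\mathcal{L}_t(|\nabla u_t|^2)$ by the product rule, control the resulting third-derivative term $2\sum_l \nabla_l u_t\, \mathcal{L}_t \nabla_l u_t$ by the \emph{once}-differentiated equation (this is exactly \eqref{bbbx} of Lemma \ref{Lem:bbb}, applied with $X=\pm e_l$ to get a two-sided bound), keep the good term $2\sum_l F_t^{ij}\nabla_{i}\nabla_lu_t\nabla_{j}\nabla_lu_t$, and convert it into $\sum_i\partial_i f_t(\lambda^{(t)})(\lambda_i^{(t)})^2$ via $\nabla_{il}u_t = W_{il}[u_t]+O(1)$ and \eqref{FWW}; the $|\nabla\underline u|^2$ part is harmless since $\underline u$ is a fixed $C^4$ function. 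Two corrections are needed. First, the twice-differentiated equation and the concavity of $f_t$ play no role here and should be dropped: the term $F_t^{ij,pq}\nabla_kW_{ij}\nabla_kW_{pq}$ never arises in the computation of $\mathcal{L}_t(|\nabla u_t|^2)$ (it would if you were estimating something like $\mathcal{L}_t(\Delta u_t)$ for a global second-derivative bound); the correct sign in \eqref{LDu^2} comes solely from the positive definiteness of $(F_t^{ij})$ in the Cauchy--Schwarz step. Second, your error bounds of the form $C(1+\cT_t)(1+\sum_i\partial_if_t(\lambda^{(t)})|\lambda_i^{(t)}|)$ are, read literally, too large: the cross term $\cT_t\sum_i\partial_if_t(\lambda^{(t)})|\lambda_i^{(t)}|$ cannot be absorbed into the right-hand side of \eqref{LDu^2}, since there is no a priori upper bound on $\cT_t$ at this stage. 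The errors are in fact additive, $C(1+\cT_t+\sum_i\partial_if_t(\lambda^{(t)})|\lambda_i^{(t)}|)$, but seeing this requires the cancellation of terms of the form $\cT_t\cdot\nabla^2u_t(\nabla u_t,\cdot)$ (coming from differentiating $-\tfrac12|\nabla u_t|^2g$ inside $W[u_t]$) against the first-order part of $\mathcal{L}_t$ --- precisely the bookkeeping already packaged in Lemma \ref{Lem:bbb}, which is why the paper routes the whole estimate through that lemma rather than redoing the expansion.
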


\begin{proof}
We have
\[
\mathcal{L}_t (|\nabla u_t |^2 - |\nabla \underline{u} |^2)
	\geq 2\sum_l \nabla_l u_t \mathcal{L}_t \nabla_l u_t + 2 \sum_l F_t^{ij} \nabla_{i} \nabla_l u_t \nabla_{j} \nabla_l u_t -  C \big( 1+ \cT_t \big).
\]
By Cauchy-Schwarz inequality, we see
\[
2 \sum_l F_t^{ij} \nabla_{i} \nabla_l u_t \nabla_{j} \nabla_l u_t \geq \sum_l F_t^{ij} W_{il}[u_t] W_{jl}[u_t] - C \cT_t.
\]
The conclusion is readily seen from \eqref{FWW}, \eqref{bbbx} and the above two estimates.
\end{proof}

\begin{lemma}
\label{lemma}
Suppose that $(f, \Gamma)$, $u_t$ and $\underline{u}$ are as in Theorem \ref{thm}.
Then there exists a constant $\varepsilon > 0$ such that,
for sufficiently small $\delta  > 0$,
the function
\[
v= (u_t-\underline{u}) - \frac{(u_t-\underline{u})^2}{2} + \delta d -  d^2
\]
satisfies
\[
\mathcal{L}_t v \leq - \varepsilon \big( 1 + \cT_t \big)\; \mbox{in} \; \Omega_{\delta, x_0},\;
v\geq 0 \; \mbox{on} \; \bar{\Omega}_{\delta,x_0}\; \mbox{and}  \; v|_{\partial M} = 0,
\]
where $\varepsilon$,  $\delta$ depend only on $\|u_t\|_{C^1 (M)}, (M, g)$, $(f, \Gamma)$ and $\underline{u}$.
\end{lemma}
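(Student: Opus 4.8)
The plan is to verify the three claimed properties of $v$ in turn, with the computation of $\mathcal{L}_t v$ being the substantive part. Write $w = u_t - \underline{u} \geq 0$, so $v = w - \frac{w^2}{2} + \delta d - d^2$. For the boundary condition, note $w = 0$ and $d = 0$ on $\partial M$, so $v|_{\partial M} = 0$ is immediate. For the sign $v \geq 0$ on $\bar\Omega_{\delta,x_0}$: since $\max_M w \leq C$ and $0 \leq d \leq \delta$ on $\Omega_{\delta,x_0}$, we have $w - \frac{w^2}{2} \geq w(1 - \frac{C}{2})$... which is not automatically nonnegative. The correct way is to observe that $v = w(1 - \frac{w}{2}) + d(\delta - d)$, and we should instead argue: first shrink $\delta$ so that $\delta - d \geq 0$ on $\Omega_{\delta,x_0}$ (automatic), and handle the $w$ term by noting that on the set where $w$ is large, $v$ is controlled differently — actually the clean fix is that near $x_0$ one may need $w$ small, OR one rescales; I expect the paper inserts a small coefficient. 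Let me instead follow the standard device: the function is really used only where it matters, and $w - \frac{w^2}{2} \geq 0$ holds as long as $w \leq 2$; since this need not hold globally, the intended reading is that $\underline{u}$ is fixed and one works with $w - \frac{w^2}{2(1+\sup w)}$ or simply notes $1 - \frac{w}{2} \geq$ something — I would state the nonnegativity as following from choosing $\delta$ small and using that $t \mapsto t - t^2/2$ is increasing on $[0,1]$ together with an a priori bound; if $\sup w > 1$ one absorbs into $\delta d$. I would write this carefully but briefly, flagging that it uses $\|u_t\|_{C^0}$ and $\|\underline u\|_{C^0}$.

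Next, the main estimate $\mathcal{L}_t v \leq -\varepsilon(1 + \mathcal{T}_t)$. Compute term by term. By Lemma~\ref{Lem:bbb}, $\mathcal{L}_t w \leq C(1 + \mathcal{T}_t + \sum_i \partial_i f_t(\lambda^{(t)})|\lambda_i^{(t)}|)$. For the quadratic term, $\mathcal{L}_t(w^2) = 2w\,\mathcal{L}_t w + 2 F_t^{ij}\nabla_i w \nabla_j w$; combining with Lemma~\ref{lem:LtNu2} applied to $|\nabla u_t|^2 - |\nabla\underline u|^2$ (noting $\nabla w = \nabla u_t - \nabla\underline u$ and expanding), the key gain is the term $+\sum_i \partial_i f_t(\lambda^{(t)})(\lambda_i^{(t)})^2$ coming with a favorable sign from $-\frac{1}{2}\mathcal{L}_t(w^2)$. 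For the distance terms, $\mathcal{L}_t(\delta d - d^2) = \delta \mathcal{L}_t d - \mathcal{L}_t(d^2)$; since $d$ is smooth near $\partial M$, $|\mathcal{L}_t d| \leq C\mathcal{T}_t + C\sum_i\partial_i f_t(\lambda^{(t)})|\lambda_i^{(t)}|$ (as $\mathcal{L}_t$ has no zeroth-order part and bounded first-order part relative to $\mathcal{T}_t$), and $-\mathcal{L}_t(d^2) = -2d\,\mathcal{L}_t d - 2F_t^{ij}\nabla_i d\nabla_j d \leq -2F_t^{ij}\nabla_i d\nabla_j d + C\delta(\ldots)$. The point is that $F_t^{ij}\nabla_i d \nabla_j d$ is not obviously bounded below, so I'd instead keep $-d^2$ small by taking $\delta$ small and rely on the $\delta d$ term plus the $\sum_i \partial_i f_t (\lambda_i^{(t)})^2$ gain. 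Collecting: the bad terms are all of the form $C(1 + \mathcal{T}_t + \sum_i \partial_i f_t |\lambda_i^{(t)}|)$, multiplied either by $\delta$ or by $\frac{C}{\sup w + 1}$-type small factors, while we have one genuinely good term $c\sum_i \partial_i f_t (\lambda_i^{(t)})^2$ with $c$ bounded below. Using \eqref{f-lambda} to absorb $\sum_i \partial_i f_t|\lambda_i^{(t)}|$ into $\frac{1}{4}\sum_i \partial_i f_t(\lambda_i^{(t)})^2$ plus $C\mathcal{T}_t$, and then choosing $\delta$ small enough that the surviving $C\delta(1+\mathcal{T}_t)$ and $C\mathcal{T}_t$ pieces (there is no $\delta$ in front of the latter!) do not swamp the gain, we conclude.

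The hard part, and the point requiring genuine care, will be the $\mathcal{T}_t$ terms that come without a $\delta$ in front — these arise from the first-order parts of $\mathcal{L}_t$ acting on $w$, $w^2$, and $d$, and from the Cauchy–Schwarz step in Lemma~\ref{lem:LtNu2}. To control these I would pair $-\frac{1}{2}\mathcal{L}_t(w^2)$ (which contributes $-F_t^{ij}\nabla_i w\nabla_j w + w\cdot(\text{bad}) + \frac{1}{2}\sum_i\partial_i f_t(\lambda_i^{(t)})^2$-type) against the $\mathcal{L}_t w$ term, using that $w$ is bounded so $(1 - w) \geq$ a definite negative constant is not enough — rather, the genuinely $\delta$-free negative contribution must come from somewhere, and the only candidate is that $\mathcal{L}_t(w - \frac{w^2}{2}) = (1-w)\mathcal{L}_t w - F_t^{ij}\nabla_i w\nabla_j w$; when $w$ is small (which, crucially, holds near $x_0$ where $u_t = \underline{u}$ on $\partial M$ forces $w$ small on a neighborhood, after shrinking $\delta$) the factor $(1-w) > 0$ but $\mathcal{L}_t w$ can be positive, so this doesn't directly help either. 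I therefore expect the real mechanism is: the dominant good term $\sum_i\partial_i f_t(\lambda_i^{(t)})^2$ from $-\frac12\mathcal L_t(w^2)$ together with \eqref{f5} (which gives $\mathcal{T}_t = \sum_i\partial_i f_t \geq f(e) > 0$, i.e. a positive lower bound on $\mathcal{T}_t$!) is what lets us beat the $C\mathcal{T}_t$ terms: once $\mathcal{T}_t$ is bounded below, any estimate $\mathcal{L}_t v \leq -c\mathcal{T}_t + C$ can be upgraded, after possibly shrinking the constant, to $-\varepsilon(1 + \mathcal{T}_t)$. So my final step is: show $\mathcal{L}_t v \leq -c\sum_i\partial_i f_t(\lambda_i^{(t)})^2 - c\mathcal{T}_t + C\delta(1 + \mathcal{T}_t + \sum_i\partial_i f_t|\lambda_i^{(t)}|)$, absorb the last $\delta$-term via \eqref{f-lambda} and smallness of $\delta$, then use $\mathcal{T}_t \geq f(e)$ to convert $-c\mathcal{T}_t + (\text{remaining }C) \leq -\varepsilon(1+\mathcal{T}_t)$ after shrinking $\varepsilon$. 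I would present the computation compactly, cite Lemmas~\ref{Lem:bbb} and~\ref{lem:LtNu2} for the two main building blocks, and emphasize the role of \eqref{f5} and \eqref{f-lambda} in the absorption.
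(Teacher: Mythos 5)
Your proposal does not close the lemma: it never identifies the actual source of the strict negativity $-\varepsilon(1+\mathcal{T}_t)$, and you in fact acknowledge this yourself (``there is no $\delta$ in front of the latter!''). The mechanism in the paper is entirely different from what you sketch and rests on the \emph{subsolution hypothesis}, which your argument never invokes. One first computes exactly (not via Lemma \ref{Lem:bbb}) that
$\mathcal{L}_t\big(w - \tfrac{w^2}{2}\big) = (1-w)F_t^{ij}(W_{ij}[u_t]-W_{ij}[\underline u]) - w F_t^{ij}\nabla_i w\nabla_j w - \tfrac12(1-w)|\nabla w|^2\mathcal{T}_t$ with $w=u_t-\underline u$; the last two terms are discarded as nonpositive (no ``good'' quadratic term is extracted from $-\tfrac12\mathcal{L}_t(w^2)$), and the first is $\le \tfrac12 F_t^{ij}(W_{ij}[u_t]-W_{ij}[\underline u])\le 0$ by concavity, $\psi_z\le 0$ and $u_t\ge\underline u$. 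The strict gain then comes from a dichotomy on $|\nu_{t,\mu}-\nu_{t,\lambda}|$, where $\mu=\lambda(W[\underline u])$ ranges over a compact subset of $\Gamma$: if the normals are close, all $\partial_i f_t(\lambda)$ are comparable to $\mathcal{T}_t$, so the term $-2F_t^{nn}$ produced by $-d^2$ (which you explicitly discard as ``not obviously bounded below'') gives $-c\,\mathcal{T}_t$; if they are far apart, Lemma \ref{guan} applied to $\sum_i\partial_i f_t(\lambda)(\mu_i-\lambda_i)$ (via \cite[Lemma 6.2]{CNS}) gives $-\varepsilon(\mathcal{T}_t+1)$ outright. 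You use neither mechanism.

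Two further steps in your outline are incorrect. First, the claimed ``key gain'' $+\sum_i\partial_i f_t(\lambda_i^{(t)})^2$ does not arise from $-\tfrac12\mathcal{L}_t(w^2)$: that identity yields $-w\mathcal{L}_t w - F_t^{ij}\nabla_i w\nabla_j w$ and contains no Hessian-squared term; the term you have in mind comes from $|\nabla u_t|^2$ (Lemma \ref{lem:LtNu2}), which is not part of $v$. Even if such a term were present, it could not absorb a $\delta$-free $+C\mathcal{T}_t$, since $\sum_i\partial_i f_t\,\lambda_i^2$ does not dominate $\sum_i\partial_i f_t$ (take $\lambda$ near $0$ on the unit level set); \eqref{f-lambda} only converts $\sum_i\partial_i f_t|\lambda_i|$ into such terms and always re-emits $+\tfrac{C}{\varepsilon}\mathcal{T}_t$. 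Second, the final ``upgrade'' is false as stated: from $\mathcal{T}_t\ge f(e)$ one can split $-c\mathcal{T}_t\le -\tfrac{c}{2}\mathcal{T}_t-\tfrac{c}{2}f(e)$, but this absorbs only constants that are already small (e.g.\ of size $C\delta$), not a fixed $+C$; $-c\mathcal{T}_t+C\le -\varepsilon(1+\mathcal{T}_t)$ fails whenever $C>cf(e)$. (The nonnegativity of $v$, by contrast, is unproblematic: for $\delta$ small one has $0\le w\le\tfrac12$ on $\Omega_{\delta,x_0}$ by the Lipschitz bound and $w|_{\partial M}=0$, so $v=w(1-\tfrac{w}{2})+d(\delta-d)\ge 0$; no modification of the barrier is needed.)
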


\begin{proof} By a direct calculation, we have that
\[\begin{aligned}
& \mathcal{L}_t (u_t - \underline{u}) \\
= &\; F_t^{ij} (W_{ij}[u_t] - W_{ij} [\underline{u}]) \\
&\; + F_t^{ij} \Big( \nabla_i \underline{u} \nabla_j \underline{u} -\frac{1}{2} |\nabla \underline{u}|^2 \delta_{ij}
 - \nabla_i u_t \nabla_j u_t + \frac{1}{2}|\nabla u_t|^2 \delta_{ij} \Big)\\
&\; + F_t^{ij} \Big( \nabla_i u_t \nabla_j (u_t - \underline{u}) + \nabla_j u_t \nabla_i (u_t - \underline{u})
  - \sum_l \nabla_l u_t \nabla_l (u_t - \underline{u}) \delta_{ij} \Big)\\
  = &\; F_t^{ij} (W_{ij}[u_t] - W_{ij} [\underline{u}]) + F_t^{ij} \nabla_i (u_t - \underline{u}) \nabla_j (u_t - \underline{u})
   - \frac{1}{2} |\nabla (u_t - \underline{u})|^2 \cT_t.
\end{aligned}\]
Therefore, we obtain that
\[\begin{aligned}
& \mathcal{L}_t \Big( (u_t - \underline{u}) - \frac{(u_t - \underline{u})^2}{2}\Big)\\
= &\; \Big(1- (u_t - \underline{u}) \Big) \mathcal{L}_t (u_t - \underline{u}) - F_t^{ij} \nabla_i (u_t - \underline{u}) \nabla_j (u_t - \underline{u})\\
= &\;  \Big(1- (u_t - \underline{u}) \Big) F_t^{ij} (W_{ij}[u_t] - W_{ij} [\underline{u}]) - (u_t - \underline{u}) F_t^{ij} \nabla_i (u_t - \underline{u}) \nabla_j (u_t - \underline{u})\\
 &\; - \frac{1}{2} \Big( 1- (u_t - \underline{u}) \Big)  |\nabla (u_t - \underline{u})|^2 \cT_t.
\end{aligned}\]

Note that $F_t^{ij} (W_{ij}[u_t] - W_{ij}[\underline{u}] ) \leq 0$ since $F_t$ is concave, $\underline{u} \leq u_t$ and $\psi_z \leq 0$, i.e.
\[
0 \leq \psi (x, \underline{u}) - \psi (x, u_t) \leq F_t (W[\underline{u}]) - F_t(W[u_t]) \leq  F_t^{ij} (W_{ij}[\underline{u}] - W_{ij}[u_t] ). 
\]
Also, in $\Omega_{\delta, x_0}$ with $\delta$ small enough depending on $\|u_t\|_{C^1(M)}$ and $\underline{u}$, it is easy to see that
\[
0 \leq u_t - \underline{u} \leq \frac{1}{2}.
\] 
Combining the above two facts and that $\{ F_t^{ij} \}$ is positive definite, we then arrive at
\[
 \mathcal{L}_t \Big( (u_t - \underline{u}) - \frac{(u_t - \underline{u})^2}{2}\Big) \leq 
 \frac{1}{2} F_t^{ij} (W_{ij}[u_t] - W_{ij}[\underline{u}] ). 
\]
Let $\delta > 0$ be small enough depending on $(M, g)$ so that $d$ is regular in $\Omega_{\delta, x_0}$.
Due to that $\nabla_\alpha d = 0$ for $1\leq \alpha \leq n-1$ and $\nabla_n d = 1$, we obtain that
\[
\mathcal{L}_t (\delta d - d^2) = (\delta - 2d) \mathcal{L}_t d - 2 F_t^{nn}
\leq  C  \delta  \cT_t - 2 F_t^{nn}.
\]
Therefore, in $\Omega_{\delta, x_0}$ with $\delta$ small enough depending on $\|u_t\|_{C^1(M)}$, $(M, g)$ and $\underline{u}$, we have
\begin{equation}
\label{Lv}
\mathcal{L}_t v \leq \frac{1}{2} F_t^{ij} (W_{ij}[u_t] - W_{ij}[\underline{u}] ) + C \delta \cT_t - 2 F_t^{nn},
\end{equation}
where $C$ depends on $(M, g)$ and $\|u_t\|_{C^1 (M)}$.

Let $\lambda (x) := \lambda^{(t)}(x) = \lambda (W [u_t](x))$ and $\mu (x) := \lambda (W [\underline{u}](x))$. We assume that the eigenvalues $ \lambda ( x )$ and $ \mu ( x )$ are arranged in ascending order:
\[
\lambda_1 \leq \cdots \leq \lambda_n \; \mbox{and} \;
\mu_1 \leq \cdots \leq \mu_n.
\]
Therefore, by \eqref{f3} and the fact that $f$ is symmetric, we can derive that 
\[
\partial_1 f_t (\lambda) \geq \cdots \geq \partial_n f_t (\lambda).
\]
We then have
\begin{equation}
\label{F-f-1}
F_t^{ij} (W_{ij}[\underline{u}] - W_{ij}[u_t]) = F_t^{ij}W_{ij}[\underline{u}]  - \sum_i \partial_i f_t (\lambda) \lambda_i \geq \sum_i \partial_i f_t(\lambda) (\mu_i - \lambda_i),
\end{equation}
where for the last inequality we used \cite[Lemma 6.2]{CNS} (or see \cite[Lemma 1.5]{Spruck}).
Since $\{\nu_{t,\mu(x)}: t \in [0,1], x\in M \}$ is a compact set in $\Gamma_n$,
there exists a sufficiently small constant $\beta > 0$
depending on $\underline{u}$, $(M, g)$ and $(f, \Gamma)$
such that for all $t \in [0,1]$, $x \in M$
\[
\nu_{t,\mu(x) } - 2\beta (1, \ldots, 1) \in \Gamma_n.
\]

We continue our argument depending on whether $ |\nu_{t,\mu(x)} - \nu_{t,\lambda(x)}|$ is at most $\beta$ or more than $\beta$.

If $ |\nu_{t,\mu(x)} - \nu_{t,\lambda(x)}| \leq \beta $, we have
$\nu_{t,\lambda(x)} - \beta (1, \ldots, 1) \in \Gamma_n$,
which is equivalent to that, at the point $x$,
\begin{equation}
\label{F-f-3}
\partial_i f_t (\lambda ) \geq \beta |D f_t(\lambda)| \geq \frac{\beta}{\sqrt{n}} \sum_i \partial_i f_t(\lambda ), \; \forall \; 1\leq i\leq n.
\end{equation}
Hence, combining with \eqref{Lv} and \eqref{F-f-1}, we have
\[
\mathcal{L}_t v \leq C \delta \cT_t- \frac{2\beta}{\sqrt{n}} \cT_t.
\]
 
If $|\nu_{t,\mu(x)} - \nu_{t,\lambda(x)}| > \beta$, then according to Lemma \ref{guan}
 and since $\mu(M)$ is a compact subset of $\Gamma$, there exists a positive constant $\varepsilon$ depending on $\beta$,
$(f, \Gamma)$ and $\underline{u}$ such that
\begin{equation}
\label{F-f-2}
\sum_i \partial_i f_t(\lambda) (\mu_i - \lambda_i )  \geq \varepsilon \sum_i \partial_i f_t(\lambda ) + \varepsilon,
\end{equation}
where we have used $f_t(\mu (x)) \geq f(\mu(x)) \geq \psi(x,\underline{u}(x)) \geq \psi(x,u_t(x)) = f_t(\lambda (x))$. Combining with \eqref{Lv}, \eqref{F-f-1} and \eqref{F-f-2}, we have
\[
\mathcal{L}_t v \leq - \frac{\varepsilon}{2} \big( \cT_t + 1 \big) + C  \delta  \cT_t.
\]

In both cases, we can choose  $\delta$ small which only depend on $\|u_t\|_{C^1 (M)}$, $(M, g)$, $\varepsilon$ and $\beta$ to prove Lemma \ref{lemma} for a small $\varepsilon > 0$ since $\cT_t > f(1, \ldots, 1)$.
\end{proof}

\section{A priori boundary second derivative estimates}\label{Sec3}

Throughout this section, $C$ denotes some generic constant that may change from lines to lines but depends only on $\|u_t\|_{C^1 (M)}, (M, g), (f, \Gamma), \underline{u}, \psi$ and $\varphi$. Proposition \ref{prop} follows immediately from the following two lemmas.

\begin{lemma}
\label{lem:p1}
Under the assumptions of Proposition \ref{prop}, for any point $x_0 \in \partial M$ and any adapted frame $e_1, \ldots, e_n$ near $x_0$ as in Subsection \ref{ssec22}, it holds that
\[
|\nabla_{ij} u_t(x_0)| \leq C \text{ for } t \in [0,1], (i,j) \neq (n,n).
\]
\end{lemma}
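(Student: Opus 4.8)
The plan is to estimate the pure tangential second derivatives $\nabla_{\alpha\beta}u_t(x_0)$ for $\alpha,\beta \leq n-1$ and the mixed tangential-normal derivatives $\nabla_{\alpha n}u_t(x_0)$ for $\alpha \leq n-1$ separately, following the now-standard barrier arguments going back to Caffarelli--Nirenberg--Spruck \cite{CNS} and refined by Guan \cite{Guan99, Guan07, Guan14, Guan18}. The point to stress throughout is that every constant must be controlled independently of $t\in[0,1]$; the auxiliary estimates collected in Section \ref{Sec2}, in particular Lemmas \ref{guan}--\ref{guan2} and the barrier function $v$ from Lemma \ref{lemma}, have already been stated with $t$-uniform constants, so they are the right tools to invoke.

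First, the pure tangential derivatives. Since $u_t = \underline{u} = \varphi$ on $\partial M$, differentiating this identity twice along the boundary gives, for $\alpha,\beta \leq n-1$,
\[
\nabla_{\alpha\beta}(u_t - \underline{u})(x_0) = -\nabla_n(u_t - \underline{u})(x_0)\, \mathrm{II}(e_\alpha,e_\beta),
\]
where $\mathrm{II}$ is the second fundamental form of $\partial M$; since $u_t - \underline{u} \geq 0$ in $M$ vanishes on $\partial M$, the quantity $\nabla_n(u_t-\underline{u})(x_0) \geq 0$ is already bounded by $\sup_{\partial M}|\nabla u_t| + \sup_{\partial M}|\nabla\underline u| \leq C$, so $|\nabla_{\alpha\beta}u_t(x_0)| \leq C$ follows at once. (No barrier is needed here once the gradient bound is in hand.)

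Next, the mixed derivatives $\nabla_{\alpha n}u_t(x_0)$ for $\alpha \leq n-1$. Here I would follow the classical tangential-operator barrier argument: apply a first-order tangential differential operator $T = \partial_\alpha + (\text{lower-order correction})$ to the equation, so that $T(u_t - \underline{u})$ vanishes on $\partial M$ to the appropriate order, and construct on $\Omega_{\delta,x_0}$ a barrier of the form $\Psi = A v + B|x-x_0|^2 \pm T(u_t - \underline{u})$, with $v$ the function from Lemma \ref{lemma}. Using $\mathcal{L}_t v \leq -\varepsilon(1 + \cT_t)$ from Lemma \ref{lemma}, the bound $\mathcal{L}_t(X(u_t - \underline{u})) \leq C(1 + \cT_t + \sum_i \partial_i f_t(\lambda^{(t)})|\lambda_i^{(t)}|)$ from Lemma \ref{Lem:bbb}, and the inequality \eqref{f-lambda} (or Lemma \ref{guan2}) to absorb the term $\sum_i\partial_i f_t |\lambda_i^{(t)}|$ into $\varepsilon\sum_i\partial_i f_t \lambda_i^2 + \varepsilon^{-1}\cT_t$ — with the $\sum_i\partial_i f_t\lambda_i^2$ term in turn dominated using Lemma \ref{lem:LtNu2} applied to $|\nabla u_t|^2 - |\nabla\underline u|^2$ (added to the barrier with a small coefficient) — one arranges $\mathcal{L}_t\Psi \leq 0$ in $\Omega_{\delta,x_0}$ for $A$ large and $\delta$ small, uniformly in $t$. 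On the boundary $\partial\Omega_{\delta,x_0}$ one checks $\Psi \geq 0$: on the flat part $\rho = \delta$ using $v \geq 0$, positivity of $Bv$-type terms and the $C^1$ bound on $u_t$; on $\partial M \cap \overline{\Omega}_{\delta,x_0}$ using that $T(u_t-\underline u)$ vanishes there to sufficient order. The maximum principle then gives $\Psi \geq 0$ in $\Omega_{\delta,x_0}$, and since $\Psi(x_0) = 0$ we get $\nabla_n \Psi(x_0) \geq 0$, which translates into the one-sided bound $\mp\nabla_{\alpha n}(u_t - \underline{u})(x_0) \leq C$; taking both signs yields $|\nabla_{\alpha n}u_t(x_0)| \leq C$.

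The main obstacle is the $t$-uniformity in the mixed-derivative step: the operators $F_t^{ij}$ degenerate as $t\nearrow 1$ in directions where $\Gamma_t$ opens up, so one cannot simply quote a fixed-$(f,\Gamma)$ estimate. The resolution is exactly that the delicate estimates of Section \ref{Sec2} — Lemma \ref{guan} (the uniform version of Guan's Lemma 1.9), Lemmas \ref{guan2} and \ref{guan3} (uniform Cauchy--Schwarz-type bounds on $\sum\partial_i f_t|\lambda_i|$), and the barrier Lemma \ref{lemma} with its $t$-independent $\varepsilon,\delta$ — have all been engineered to hold with constants depending only on $(f,\Gamma)$ and not on $t$, using the ball property of Lemma \ref{Lem:GtBall}. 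So the plan is to carry out the classical barrier construction but taking care at each step to invoke only these $t$-uniform lemmas, never a bare estimate for a single $(f_t,\Gamma_t)$; the double-normal derivative bound $|\nabla_{nn}u_t(x_0)| \leq C$, which is the genuinely new and harder part, is handled separately in the next lemma and is not needed here.
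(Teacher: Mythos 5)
Your treatment of the pure tangential derivatives is exactly the paper's argument, and your overall strategy for the mixed derivatives (tangential operator plus the barrier $Av + B\rho^2$ from Lemma \ref{lemma}, maximum principle, $\nabla_n$ at $x_0$) is the right one. However, the specific mechanism you propose for absorbing the bad term $\sum_i \partial_i f_t(\lambda^{(t)})|\lambda^{(t)}_i|$ has a genuine gap. You suggest producing the good term $\sum_i \partial_i f_t (\lambda^{(t)}_i)^2$ by adding $|\nabla u_t|^2 - |\nabla \underline{u}|^2$ to the barrier with a small coefficient and invoking Lemma \ref{lem:LtNu2}. To extract the favourable sign from that lemma you must put this quantity into the barrier with a \emph{negative} coefficient, but on $\partial M$ one has $|\nabla u_t|^2 - |\nabla \underline{u}|^2 = \nabla_n(u_t-\underline{u})\,\nabla_n(u_t+\underline{u})$, which is $O(1)$ and of no definite sign; so the barrier neither vanishes at $x_0$ nor stays nonnegative on $\partial M \cap \overline{\Omega}_{\delta,x_0}$. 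Worse, even if the maximum principle could be salvaged, the conclusion $\nabla_n \Psi(x_0) \geq 0$ would now involve $\nabla_n(|\nabla u_t|^2)(x_0) = 2\nabla_n u_t \nabla_{nn}u_t(x_0) + \ldots$, i.e.\ the double normal derivative you do not yet control and whose coefficient has no favourable sign here. (In Lemmas \ref{key} and \ref{key2} this term is deliberately allowed to appear, because there its coefficient is shown to be bounded below by a positive constant; no such structure is available for the mixed-derivative estimate.)

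The paper's fix is to use instead the \emph{tangential} gradient square: it takes $w = \pm\nabla_\alpha(u_t - \varphi) - \sum_{\ell<n}|\nabla_\ell(u_t-\varphi)|^2$. Since $\nabla_\ell(u_t - \varphi) = 0$ on $\partial M$ for $\ell < n$, this correction vanishes on $\partial M$ and its normal derivative at $x_0$ vanishes as well, so it contaminates neither the boundary comparison nor the final conclusion. The price is that it only yields the truncated good term $\sum_{\ell<n} F_t^{ij} W_{i\ell}W_{j\ell} \geq \tfrac{1}{2}\sum_{i\neq r}\partial_i f_t (\lambda^{(t)}_i)^2$ via Lemma \ref{guan3}, with one index missing; this is precisely why the refined inequality of Lemma \ref{guan2} (which only requires $\sum_{i\neq r}$ on the right-hand side) must be used in place of the naive \eqref{f-lambda}. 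With this substitution your argument closes, and the $t$-uniformity is indeed handled exactly as you describe, by invoking only the uniform Lemmas \ref{guan2}, \ref{guan3}, \ref{Lem:bbb} and \ref{lemma}.
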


\begin{lemma}
\label{lem:p2}
Under the assumptions of Proposition \ref{prop}, for any point $x_0 \in \partial M$ and any adapted frame $e_1, \ldots, e_n$ near $x_0$ as in Subsection \ref{ssec22}, it holds that
\[
|\nabla_{nn} u_t(x_0)| \leq C \text{ for } t \in [0,1].
\]
\end{lemma}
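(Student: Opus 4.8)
The proof of Lemma~\ref{lem:p2} reduces to an upper bound for $R := W_{nn}[u_t](x_0)$: since $\nabla_{nn}u_t(x_0)$ differs from $R$ by a quantity controlled by $\|u_t\|_{C^1(M)}$, and since Lemma~\ref{lem:p1} bounds every other entry of $W[u_t](x_0)$, we may write $W[u_t](x_0)=M_0+R\,e_n\otimes e_n$ with $\|M_0\|\le C$ and $(M_0)_{nn}=0$, so that $\lambda^{(t)}:=\lambda_g(W[u_t](x_0))$ has $|\lambda^{(t)}_\alpha|\le C$ for $\alpha<n$ and $\lambda^{(t)}_n=R+O(1)$. The lower bound $R\ge -C$ is immediate from $\Gamma_t\subset\Gamma_1$, which forces $\sigma_1(W[u_t](x_0))>0$ and hence $R>-\sum_{\alpha<n}W_{\alpha\alpha}[u_t](x_0)\ge -C$ (alternatively, combine $u_t\ge\underline u$ with the barrier $v$ of Lemma~\ref{lemma}). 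Everything therefore comes down to an upper bound for $R$ that is \emph{uniform in $t\in[0,1]$}.

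The decisive step, and the source of the uniformity, is the estimate $(1-t)R\le C$ for $t\in[0,1)$, obtained from Lemma~\ref{Lem:GtBall}. If $R$ is large, then $\lambda^{(t)}_n>0$ and $\big|\lambda^{(t)}/\lambda^{(t)}_n-(0,\ldots,0,1)\big|\le C/\lambda^{(t)}_n$, so $\lambda^{(t)}/\lambda^{(t)}_n$ lies in the ball $B\big((0,\ldots,0,1),\tfrac{1}{2n}(1-t)\big)$ as soon as $(1-t)R\ge C_0$; then Lemma~\ref{Lem:GtBall} and the degree-one homogeneity of $f_t$ give $\psi(x_0,u_t(x_0))=f_t(\lambda^{(t)})=\lambda^{(t)}_n f_t\big(\lambda^{(t)}/\lambda^{(t)}_n\big)\ge\tfrac12(1-t)(R-C)f(e)$, whence $(1-t)R\le C$; combined with the alternative $(1-t)R<C_0$ this proves $(1-t)R\le C$ in all cases. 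As a consequence --- also, trivially, when $t=1$ --- the tangential block of $W^t[u_t](x_0)=tW[u_t](x_0)+(1-t)\sigma_1(W[u_t](x_0))g$ has eigenvalues bounded by $C$, uniformly in $t$, since its entries are $tW_{\alpha\beta}[u_t](x_0)+(1-t)\sigma_1(W[u_t](x_0))\delta_{\alpha\beta}$ with $(1-t)\sigma_1(W[u_t](x_0))=(1-t)R+O(1)$ bounded. We are thus reduced to bounding the $(n,n)$-entry of $W^t[u_t](x_0)$ --- equivalently $R$ --- given that its tangential block is bounded and $f(\lambda_g(W^t[u_t](x_0)))=\psi(x_0,u_t(x_0))\le C$.

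This reduced problem is handled, uniformly in $t$, by Guan's scheme for the double normal derivative \cite{Guan07,Guan14,Guan18} applied to the matrix $W^t[u_t](x_0)$ and the \emph{fixed} pair $(f,\Gamma)$; in all cases the subsolution is essential. If $f$ is of bounded type, $\Gamma$ is necessarily of type~$1$ and the objects $f_\infty$, $F_\infty$, $\mathcal N_\infty$ of Lemma~\ref{Lem:Finfty} are available. Since $f$ is increasing, $f(\lambda_g(W^t[u_t](x_0)))\le F_\infty$ of the tangential eigenvalues of $W^t[u_t](x_0)$, so the equation together with $\psi\ge c_\psi>0$ already confines those tangential eigenvalues to a fixed compact subset of $\Gamma'$ bounded away from $\partial\Gamma'$; but this alone does not bound $R$, because $s\mapsto f(\cdot,s)$ merely increases toward its limit. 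What is needed is the strict gap $F_\infty(\text{tangential block of }W^t[u_t](x_0))\ge\psi(x_0,u_t(x_0))+c$ for a fixed $c>0$, after which, $s\mapsto f(\cdot,s)$ being strictly increasing, the equation $f(\cdot,s)=\psi(x_0,u_t(x_0))$ has a uniformly bounded solution and $R\le C$. This gap is supplied by the subsolution through a barrier argument near $x_0$: since $\underline u$ is fixed and smooth, $W^t[\underline u](x_0)$ ranges over a fixed compact family of matrices with eigenvalues in $\Gamma$, and $f(\lambda_g(W^t[\underline u](x_0)))=f_t(\lambda_g(W[\underline u](x_0)))\ge\psi(x_0,\underline u(x_0))=\psi(x_0,u_t(x_0))$, with strict monotonicity on that compact family pushing $F_\infty$ of the tangential block of $W^t[\underline u](x_0)$ above $\psi(x_0,u_t(x_0))$ by a fixed amount; one then linearizes $F_\infty$ at these blocks (using the positive semi-definiteness, anti-monotonicity and local boundedness of $\mathcal N_\infty$), and, exploiting $u_t=\underline u=\varphi$ on $\partial M$ together with the bounds of Lemma~\ref{lem:p1} along $\partial M$, checks that $u_t$ is a subsolution of the resulting linear boundary operator, so that the maximum principle propagates the gap from $\underline u$ to $u_t$. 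When $f$ is instead of unbounded type, the analogous (and simpler) conclusion follows from the inequality \eqref{Condition7}, which yields a barrier forcing $R\le C$ once the tangential eigenvalues are confined to a fixed compact subset of $\Gamma'$ (immediate if $\Gamma$ is of type~$2$, since then $\Gamma'=\RR^{n-1}$).

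The hard part is precisely this uniformity as $t\nearrow 1^-$. For a fixed $f_t$ with $t<1$ the double normal bound is classical (and was established in \cite{LiN}), but the constant there degenerates as $t\to1$: when $f$ is of bounded type, $f_t$ becomes of bounded type in the limit and the naive ``$f_t$ of unbounded type'' argument yields only a $t$-dependent bound, of exactly the strength of $(1-t)R\le C$. The resolution is the two-stage structure above --- Lemma~\ref{Lem:GtBall} quantitatively disposes of the regime where \emph{all} eigenvalues of $W^t[u_t](x_0)$ diverge, while the subsolution-based $f_\infty$-barrier, run for the matrix $W^t[u_t]$ and the $t$-independent pair $(f,\Gamma)$, handles the regime where only the normal eigenvalue is large, identically for every $t$ including $t=1$. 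Two further points require care: a pointwise transfer of the gap from $\underline u$ to $u_t$ is \emph{not} possible, because the tangential data of $u_t$ and of $\underline u$ on $\partial M$ differ by an $O(1)$ (not small) amount, proportional to $\nabla_\nu(u_t-\underline u)(x_0)\ge 0$ and to the second fundamental form of $\partial M$, whose sign is not available; and the passage between $W[u_t](x_0)$, $W[\underline u](x_0)$ and their tangential blocks relies on eigenvalue interlacing together with \cite[Lemma~6.2]{CNS}.
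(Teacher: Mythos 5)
Your overall architecture matches the paper's: the lower bound from $\Gamma_t \subset \Gamma_1$ (Corollary~\ref{Cor:X}), the bound $(1-t)\nabla_{nn}u_t \leq C$ via Lemma~\ref{Lem:GtBall} and homogeneity, hence $|W^t_{ij}[u_t]| \leq C$ for $(i,j)\neq(n,n)$ (Lemma~\ref{lem:px}), the reduction to a $t$-independent problem for the fixed pair $(f,\Gamma)$, the case split on the type of $\Gamma$ and of $f$, the use of $F_\infty$ and its linearization $\mathcal{N}_\infty$ with a subsolution barrier to produce the gap $f_\infty(\tilde\eta^{(t)}) - \psi \geq c_0$ (Lemma~\ref{key2}), and \eqref{Condition7} in the unbounded case. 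You also correctly identify why a pointwise transfer of the gap from $\underline{u}$ to $u_t$ fails.

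There is, however, a genuine gap: you never establish the uniform confinement $d_{\Gamma'}(\tilde\eta^{(t)}) \geq c_0$ of the tangential eigenvalues away from $\partial\Gamma'$ when $\Gamma$ is of type $1$, which is the content of the paper's Lemma~\ref{key} and is indispensable in both remaining sub-cases. For $f$ of unbounded type (e.g.\ $f=\sigma_k^{1/k}$ with $k\geq 2$, so this is the $\sigma_k$-Yamabe case itself, not a corner case), \eqref{Condition7} applies only once $(\eta^{(t)})'(x_0)$ lies in a fixed \emph{compact subset of $\Gamma'$}; the boundedness from Lemma~\ref{lem:px} does not give this, and your parenthetical ``immediate if $\Gamma$ is of type $2$'' leaves the type-$1$ case with no argument at all (your $F_\infty$-based confinement is vacuous there since $f_\infty\equiv+\infty$). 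For $f$ of bounded type, your shortcut --- $f_\infty(\tilde\eta^{(t)}) \geq f(\eta^{(t)}) = \psi \geq c_\psi>0$, hence confinement --- tacitly assumes that $f_\infty$ tends to $0$ locally uniformly at $\partial\Gamma'$, which is neither among \eqref{f1}--\eqref{f4} nor proven; moreover the compact set $\mathcal{K}$ on which $\|\mathcal{N}_\infty\|$ is uniformly bounded (Lemma~\ref{Lem:Finfty}(iv)), which your linearization requires, is obtained in the paper precisely from Lemma~\ref{key}. The missing step is itself a full subsolution-based barrier argument of the same weight as the $f_\infty$-gap: one minimizes $\zeta_{t,\lambda_0',\gamma} = \gamma\cdot\tilde\eta^{(t)} - (1-t)\sum_{\alpha<n}\gamma_\alpha\nabla_{nn}u_t$ over $\partial M$ for supporting normals $\gamma$ of $\Gamma'$ (the correction term is needed because $(1-t)\nabla_{nn}u_t$, though bounded, is not controlled in $C^1$ along $\partial M$), uses \eqref{bb}, \eqref{Eq:T0} and \cite[Lemma 6.2]{CNS} to convert $d_{\Gamma'}(\lambda(\tilde W^t[\underline{u}]))>0$ into positivity of the coefficient $p^t+q^t\nabla_n(u_t+\underline{u})$, and then runs the $\Psi$-barrier with Lemma~\ref{lemma}. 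Without this, the proof is incomplete for every type-$1$ cone.
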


The proof of Lemma \ref{lem:p1} is more or less identical to an argument in \cite{Guan07}. The proof of Lemma \ref{lem:p2} builds on existing arguments e.g. in \cite{Guan07, Guan14}, but, as pointed out earlier, requires new ideas to obtain uniform estimate as $t \nearrow 1^-$.

\subsection{Double tangential and mixed normal-tangential derivative estimates}

Recall the notations $W^t[u_t], F_t, F_t^{ij}, \cT_t, \lambda^{(t)}, \eta^{(t)}$ introduced in Subsection \ref{ssec22}.
 
\begin{proof}[Proof of Lemma \ref{lem:p1}] 

First, since $u_t - \underline{u} = 0$ on $\partial M$, we have
\[
\nabla_{\alpha\beta} (u_t - \underline{u})(x_0) = - \nabla_n (u_t - \underline{u})(x_0) \Pi (e_\alpha, e_\beta), \; 1\leq \alpha, \beta \leq n-1,
\]
where $\Pi$ denotes the second fundamental form of $\partial M$ with respect to $-e_n$. We therefore obtain the estimate for the pure tangential second order derivatives: $
|\nabla_{\alpha\beta} u_t (x_0)| \leq C$ for $t \in [0,1]$ and $1\leq \alpha, \beta \leq n-1$.

In the rest of the proof, we show that $|\nabla_{\alpha n} u_t (x_0)| \leq C$ for $t \in [0,1]$ and $1\leq \alpha \leq n-1$. Given $1 \leq \alpha \leq n-1$, define
\[
  w = \pm \nabla_\alpha (u_t - \varphi) - \sum_{\ell < n} |\nabla_\ell (u_t - \varphi)|^2,
\]
where $\varphi$ is extended to $M$ with $\nabla_n \varphi = 0$ on $\partial M$. Note that $w = 0$ on $\partial M$ near $x_0$.
By \eqref{bbb} in Lemma \ref{Lem:bbb},
a direct calculation shows that
\begin{align*}
 & \mathcal{L}_t  \sum_{\ell < n} |\nabla_\ell (u_t - \varphi)|^2 \\
  &\qquad= 2 \sum_{\ell< n} \Big( \nabla_\ell ( u_t -  \varphi) \mathcal{L}_t \nabla_\ell ( u_t - \varphi) + F_t^{ij} \nabla_i \nabla_\ell ( u_t -  \varphi) \nabla_j \nabla_\ell ( u_t - \varphi) \Big)\\
 &\qquad\geq  - C \Big( 1 + \cT_t + \sum_i \partial_i f_t(\lambda^{(t)}) |\lambda^{(t)}_i| \Big) +  \sum_{\ell < n} F_t^{ij} W_{i\ell} [u_t] W_{j\ell} [u_t].
\end{align*}
Using once again \eqref{bbb} and appealing to Lemma \ref{guan3}, we deduce that
\[
\mathcal{L}_t w \leq  C \Big( 1+ \cT_t  + \sum_i \partial_i f_t(\lambda^{(t)}) |\lambda^{(t)}_i| \Big)  - \frac{1}{2} \sum_{i\neq r} \partial_i f_t(\lambda^{(t)}) (\lambda^{(t)}_i)^2
\]
for some index $r$.
By Lemma \ref{guan2} and choosing $\varepsilon$ small enough, we finally arrive at
\begin{equation}
\label{Lw}
\mathcal{L}_t w \leq  C \big( 1+ \cT_t \big).
\end{equation}

Now, define $h = w + B \rho^2 + A v$, where $\rho$ and $v$ are as in Subsection \ref{ssec22}. Note that $h(x_0) = 0$, $h|_{\partial M \cap \partial \Omega_{\delta, x_0}} \geq 0$
and $h|_{\partial\Omega_{\delta, x_0} \setminus \partial M} \geq 0$ as long as $B$ large enough depending on $\|u_t\|_{C^1(M)}$, $\varphi$ and $\delta$. By Lemma \ref{lemma} and choosing $A\gg B\gg 1$, where $A$ depends on $\|u_t\|_{C^1(M)}, (M, g)$, $\underline{u}$, $\varphi$, $\psi$, $B$ and $\varepsilon$,
we see that
\[
\mathcal{L}_t h \leq ( C  + C B - A \varepsilon ) \big( 1+ \cT_t \big) \leq 0.
\]
By maximum principle, we have that $h \geq 0$ in $M \cap \Omega_{\delta, x_0}$ and $\nabla_n h (x_0) \geq 0$, which implies that
\[
|\nabla_{\alpha n} u_t (x_0)| \leq C \text{ for }  t \in [0,1] \text{ and  } 1\leq \alpha \leq n-1,
\]
which concludes the proof.
\end{proof}

\subsection{Double normal derivative estimates}

As an immediate consequence of Lemma \ref{lem:p1} and the fact that $\Gamma_t \subset \Gamma_1$, we have:

\begin{corollary}\label{Cor:X}
Under the assumptions of Proposition \ref{prop}, for any point $x_0 \in \partial M$ and any adapted frame $e_1, \ldots, e_n$ near $x_0$ as in Subsection \ref{ssec22}, it holds that
\[
\nabla_{nn} u_t(x_0) \geq -C \text{ for } t \in [0,1].
\]
\end{corollary}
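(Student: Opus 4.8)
The plan is to read off a lower bound for $\nabla_{nn} u_t(x_0)$ directly from the cone inclusion $\Gamma_t \subset \Gamma_1$, which fixes the sign of the trace of $W[u_t]$. First I would note that, since $u_t$ solves \eqref{eqn}, we have $\lambda_g(W[u_t]) \in \Gamma_t \subset \Gamma_1$ throughout $M$, so $\textrm{tr}_g W[u_t] = \sigma_1(\lambda_g(W[u_t])) > 0$ at every point, in particular at $x_0$. Expanding the trace in the adapted orthonormal frame $e_1, \ldots, e_n$ and using $W[u_t] = \nabla^2 u_t + du_t \otimes du_t - \frac12 |\nabla u_t|^2 g + A_g$ gives
\[
0 < \textrm{tr}_g W[u_t](x_0) = \Delta u_t(x_0) + \Big(1 - \frac{n}{2}\Big)|\nabla u_t(x_0)|^2 + \textrm{tr}_g A_g(x_0),
\]
and since $\Delta u_t(x_0) = \sum_{\alpha<n} \nabla_{\alpha\alpha} u_t(x_0) + \nabla_{nn} u_t(x_0)$, isolating the normal-normal term yields
\[
\nabla_{nn} u_t(x_0) > -\sum_{\alpha < n} \nabla_{\alpha\alpha} u_t(x_0) + \Big(\frac{n}{2} - 1\Big)|\nabla u_t(x_0)|^2 - \textrm{tr}_g A_g(x_0).
\]

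Next I would bound the right-hand side from below, uniformly in $t$. The term $\big(\tfrac{n}{2} - 1\big)|\nabla u_t(x_0)|^2$ is nonnegative since $n \geq 3$, so it may simply be discarded; by Lemma \ref{lem:p1} each tangential second derivative $\nabla_{\alpha\alpha} u_t(x_0)$ (for $\alpha < n$) is bounded in absolute value by $C$ independently of $t$; and $\textrm{tr}_g A_g$ is a fixed smooth function on the compact manifold $M$, hence bounded. Combining these gives $\nabla_{nn} u_t(x_0) \geq -C$, as claimed. I do not expect any real obstacle here: the only ingredients are the elementary inclusion $\Gamma_t \subset \Gamma_1$ (which holds for all $t \in [0,1]$ because $t + n(1-t) \geq 1 > 0$ forces $\sigma_1(\lambda) > 0$ whenever $\lambda \in \Gamma_t$) together with the already-established Lemma \ref{lem:p1}. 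The substance of the double normal estimate lies entirely in the matching upper bound $\nabla_{nn} u_t(x_0) \leq C$, which is the content of Lemma \ref{lem:p2}.
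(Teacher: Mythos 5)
Your proof is correct and is exactly the argument the paper intends: the corollary is stated there as an immediate consequence of Lemma \ref{lem:p1} and the inclusion $\Gamma_t \subset \Gamma_1$, which is precisely the trace positivity plus bounded tangential second derivatives computation you carry out. No issues.
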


In view of Corollary \ref{Cor:X}, in order to prove Lemma \ref{lem:p2}, we only need to give an upper bound $\nabla_{nn} u_t$ independent of $t$. Before doing this, we note a weaker estimate for $\nabla_{nn} u$ which degenerates as $t \rightarrow 1$ but is sufficient to imply a uniform bound for $W^t_{ij}[u_t]$ as $t \rightarrow 1$ when $(i,j) \neq (n,n)$. Although simpler than most other estimates in this paper, this estimate plays an important role in our subsequent argument.

\begin{lemma}
\label{lem:px}
Under the assumptions of Proposition \ref{prop}, for any point $x_0 \in \partial M$ and any adapted frame $e_1, \ldots, e_n$ near $x_0$ as in Subsection \ref{ssec22}, it holds that
\[
\nabla_{nn} u_t(x_0) \leq \frac{C}{1-t}\text{ for } t \in [0,1).
\]
Consequently, 
\[
|W_{ij}^t[u_t](x_0)| \leq C \text{ for all } t \in [0,1] \text{ and } (i,j) \neq (n,n).
\]
\end{lemma}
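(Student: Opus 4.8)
The plan is to exploit the quantitative ellipticity of $f_t$ in the normal direction that is built into $\eta^{(t)} = t\lambda^{(t)} + (1-t)\sigma_1(\lambda^{(t)})\,e$. A large value of $\nabla_{nn}u_t(x_0)$ makes $\sigma_1(\lambda^{(t)}(x_0)) = \operatorname{tr} W[u_t](x_0)$ large, and the weight $(1-t)$ then forces \emph{every} eigenvalue $\eta^{(t)}_i(x_0)$ of $W^t[u_t](x_0)$ to be at least of size $(1-t)\,\nabla_{nn}u_t(x_0)$ up to a bounded error. Monotonicity and $1$-homogeneity of $f$ therefore bound $\psi(x_0,u_t(x_0)) = f(\eta^{(t)}(x_0))$ below by a multiple of $(1-t)\,\nabla_{nn}u_t(x_0)$, which contradicts the uniform upper bound on $\psi$ unless $\nabla_{nn}u_t(x_0)\le C/(1-t)$.

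Concretely, I would first record, using Lemma~\ref{lem:p1}, the uniform $C^1$ bound on $u_t$, and $W[u_t]=\nabla^2 u_t+du_t\otimes du_t-\tfrac12|\nabla u_t|^2g+A_g$, that $|W_{ij}[u_t](x_0)|\le C$ for $(i,j)\ne(n,n)$ and $W_{nn}[u_t](x_0)=\nabla_{nn}u_t(x_0)+O(1)$. We may assume $\nabla_{nn}u_t(x_0)$, hence $N:=W_{nn}[u_t](x_0)$, is large and positive. Writing $W[u_t](x_0)$ in block form relative to $\{e_1,\dots,e_{n-1}\}$ and $e_n$, with a bounded $(n-1)\times(n-1)$ tangential block, Weyl's inequality gives $\lambda^{(t)}_1(x_0)\ge -C$, so the smallest eigenvalue of $W[u_t](x_0)$ stays bounded below even as $N\to\infty$; meanwhile the trace identity gives $\sigma_1(\lambda^{(t)}(x_0))=\operatorname{tr} W[u_t](x_0)\in[N-C,N+C]$, and $\sigma_1(\lambda^{(t)}(x_0))>0$ because $\Gamma_t\subset\Gamma_1$.

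Next, since the eigenvalues are ascending, $\eta^{(t)}_i=t\lambda^{(t)}_i+(1-t)\sigma_1(\lambda^{(t)})\ge t\lambda^{(t)}_1+(1-t)\sigma_1(\lambda^{(t)})\ge -tC+(1-t)(N-C)=(1-t)N-C$ for every $i$, so $\eta^{(t)}(x_0)\ge\tfrac12(1-t)N\,e$ componentwise provided $N\ge 2C/(1-t)$. Since $\eta^{(t)}(x_0)\in\Gamma$ (which is equivalent to $\lambda^{(t)}(x_0)\in\Gamma_t$) and $\tfrac12(1-t)N\,e\in\Gamma_n\subset\Gamma$, monotonicity \eqref{f2} along the segment joining them, together with homogeneity \eqref{f4}, gives
\[
\psi(x_0,u_t(x_0))=f_t(\lambda^{(t)}(x_0))=f(\eta^{(t)}(x_0))\ge f\big(\tfrac12(1-t)N\,e\big)=\tfrac12(1-t)N\,f(e).
\]
As $\psi(x_0,u_t(x_0))$ is bounded above uniformly in $t$ by the $C^0$ estimate on $u_t$, this forces $N\le C/(1-t)$; combined with the opposite alternative $N<2C/(1-t)$ we conclude $N\le C/(1-t)$ unconditionally, hence $\nabla_{nn}u_t(x_0)=N+O(1)\le C/(1-t)$, which is the first assertion.

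For the consequence, I would write $W_{ij}^t[u_t](x_0)=tW_{ij}[u_t](x_0)+(1-t)\sigma_1(\lambda^{(t)}(x_0))\,\delta_{ij}$: for $(i,j)\ne(n,n)$ the first term is bounded by the first step, and when $i=j<n$ the second term satisfies $0<(1-t)\sigma_1(\lambda^{(t)}(x_0))\le(1-t)(N+C)\le C$ by the bound just obtained, so $|W_{ij}^t[u_t](x_0)|\le C$ for $t\in[0,1)$, while the case $t=1$ is immediate from Lemma~\ref{lem:p1}. The one step needing care is the lower bound $\lambda^{(t)}_1(x_0)\ge -C$, i.e.\ that inflating $W_{nn}[u_t](x_0)$ cannot drag the smallest eigenvalue of $W[u_t](x_0)$ to $-\infty$; this is where the sign $W_{nn}[u_t](x_0)>0$ is genuinely used, since otherwise the term $t\lambda^{(t)}_1$ could cancel the $(1-t)\sigma_1(\lambda^{(t)})$ growth of $\eta^{(t)}$. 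Everything else is a direct computation, and one checks throughout that the constants depend only on the data listed in the statement, and in particular not on $t\in[0,1)$.
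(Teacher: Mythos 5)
Your proof is correct and takes essentially the same approach as the paper: both arguments exploit that the $(1-t)\sigma_1(\lambda)e$ term in the definition of $f_t$ forces every eigenvalue of $W^t[u_t]$ to be at least of order $(1-t)\nabla_{nn}u_t$, whence $\psi = f_t(\lambda(W[u_t])) \geq (1-t)\nabla_{nn}u_t/C$ by monotonicity and homogeneity of $f$. The only difference is cosmetic: the paper normalizes $W[u_t]$ by $W_{nn}[u_t]$ and invokes Lemma~\ref{Lem:GtBall}, whereas you carry out the same eigenvalue estimate directly on the unnormalized matrix.
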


\begin{proof} We use an argument in \cite{LiN} with the help of Lemma \ref{Lem:GtBall}. By this lemma and the Lipschitz continuity of ordered eigenvalues of symmetric matrices (Weyl's inequality), if a symmetric matrix $B = (b_{ij})$ satisfies $b_{nn} = 1$ and $|b_{ij}| \leq \frac{1-t}{C}$ for $(i,j) \neq (n,n)$, then $\lambda(B) \in \Gamma_{t}$ and $f_t(\lambda(B) \geq \frac{1-t}{C}$ for all $t < 1$. Thus if $W_{nn}[u_t](x_0) \geq \frac{C}{1 - t}$, then (in view of the estimate in Lemma \ref{lem:p1}),
\[
C \geq \psi(x_0,u_t) = F_t(W[u_t]) = W_{nn}[u_t] f_t\Big(\lambda\Big(\frac{W[u_t]}{W_{nn}[u_t]}\Big)\Big) \geq \frac{1-t}{C} W_{nn}[u_t].
\]
It follows that $W_{nn}[u_t](x_0) \leq \frac{C}{1 - t}$ and hence $\nabla_{nn} u_t (x_0) \leq \frac{C}{1 - t}$. It also follows that $\mathrm{tr}(g^{-1} W[u_t])(x_0) \leq \frac{C}{1-t}$. The last assertion is readily seen from this statement together with the definition of $W^t[u_t]$ and Lemma \ref{lem:p1}.
\end{proof}

We next observe that if $\Gamma$ is of type $2$ then there exists $C > 0$ such that if a matrix $B = (b_{ij})$ satisfies $b_{nn} = 1$ and $|b_{ij}| \leq \frac{1}{C}$, then $\lambda(B) \in \Gamma \subset \Gamma_t$ and $f_t(\lambda(B)) \geq \frac{1}{C}$ for all $t \leq 1$. The proof of Lemma \ref{lem:px} above can then be applied yielding:

\begin{lemma}
\label{lem:p2-t2}
If $\Gamma$ is of type 2 (in the sense of Definition \ref{defGType}), then Lemma \ref{lem:p2} holds.
\end{lemma}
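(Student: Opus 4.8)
The plan is to reduce the statement to an upper bound for $\nabla_{nn} u_t(x_0)$ and then to run the argument of Lemma \ref{lem:px} essentially verbatim, the only change being that when $\Gamma$ is of type $2$ the ``ball property'' used there holds uniformly in $t \in [0,1]$ rather than with a radius shrinking like $1-t$. Indeed, Corollary \ref{Cor:X} already gives $\nabla_{nn} u_t(x_0) \ge -C$ for all $t \in [0,1]$, so it suffices to prove $\nabla_{nn} u_t(x_0) \le C$. Since $\Gamma$ is of type $2$, the point $(0,\ldots,0,1)$ lies in the interior of $\Gamma$; hence, using Weyl's inequality for the ordered eigenvalues of symmetric matrices together with the positivity and continuity of $f$ on $\Gamma$ and the inequality $f_t \ge f$ on $\Gamma \subset \Gamma_t$, there is a constant $C_0 > 0$ \emph{independent of $t$} such that any symmetric matrix $B = (b_{ij})$ with $b_{nn} = 1$ and $|b_{ij}| \le 1/C_0$ for $(i,j) \ne (n,n)$ satisfies $\lambda(B) \in \Gamma \subset \Gamma_t$ and $f_t(\lambda(B)) \ge 1/C_0$ for every $t \in [0,1]$ --- this is precisely the observation recorded just above the lemma.

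First I would invoke Lemma \ref{lem:p1} and the standing $C^1$-bound on $u_t$ to obtain $|W_{ij}[u_t](x_0)| \le C_1$ for all $t \in [0,1]$ and all $(i,j) \ne (n,n)$, with $C_1$ independent of $t$. If $W_{nn}[u_t](x_0) \le C_0 C_1$ we are done, since $\nabla_{nn} u_t(x_0)$ differs from $W_{nn}[u_t](x_0)$ only by terms controlled by $\|u_t\|_{C^1(M)}$ and the geometry of $(M,g)$. Otherwise $W_{nn}[u_t](x_0) > C_0 C_1 > 0$, so $B := W[u_t](x_0)/W_{nn}[u_t](x_0)$ has $b_{nn}=1$ and $|b_{ij}| \le C_1/(C_0 C_1) = 1/C_0$ for $(i,j) \ne (n,n)$. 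The ball property and the degree-one homogeneity of $f_t$ then give
\[
\psi(x_0, u_t) = F_t(W[u_t](x_0)) = W_{nn}[u_t](x_0)\,f_t(\lambda(B)) \ge \frac{1}{C_0}\, W_{nn}[u_t](x_0),
\]
and since the a priori $C^0$-bound on $u_t$ confines $(x_0,u_t(x_0))$ to a fixed compact subset of $M\times\RR$ on which $\psi$ is bounded, we conclude $W_{nn}[u_t](x_0) \le C_0 \sup\psi$ and hence $\nabla_{nn} u_t(x_0) \le C$, all constants being independent of $t$. Together with Corollary \ref{Cor:X} this establishes Lemma \ref{lem:p2} under the type-$2$ hypothesis.

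There is no real obstacle in this case: the whole difficulty of the general double-normal estimate is the degeneration of the ball property as $t\nearrow 1^-$ (as in Lemma \ref{lem:px}, whose ball has radius $\sim (1-t)$), and the type-$2$ hypothesis removes it by providing a ball of fixed radius around $(0,\ldots,0,1)$. The only points needing (routine) care are verifying that $C_0$, $C_1$ and $\sup\psi$ are independent of $t$ --- which holds by Lemma \ref{lem:p1} and the standing uniform $C^0$/$C^1$ estimates --- and disposing of the trivial case ``$W_{nn}[u_t](x_0)$ small'' first so that the rescaling by $W_{nn}[u_t](x_0)$ is legitimate.
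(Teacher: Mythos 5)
Your proof is correct and follows essentially the same route as the paper: the paper likewise observes that for a type-2 cone the ball property around $(0,\ldots,0,1)$ holds with radius independent of $t$, and then reruns the rescaling argument of Lemma \ref{lem:px} verbatim. Your write-up merely makes explicit the dichotomy on the size of $W_{nn}[u_t](x_0)$ and the uniformity of the constants, which the paper leaves implicit.
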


In the rest of this section, we consider the proof of Lemma \ref{lem:p2} when $\Gamma$ is not of type $2$, i.e. $\Gamma$ is of type $1$, and for $t$ close to $1$. In particular, $\Gamma'$, the projection of $\Gamma$ onto $\RR^{n-1}$, is a proper subset of $\RR^n$. Recall that $d_{\Gamma'}$ denotes the distance function to $\partial\Gamma' \neq \emptyset$. In view of Lemma \ref{lem:px}, the task of bounding the double normal derivatives of $u$ reduces to the task of bounding $\eta^{(t)}_n$. 

We borrow an idea from the work \cite[Section 6]{CNS} for Hessian equations, namely we aim to show first that the projection $(\eta^{(t)})' (x) :=   (\eta^{(t)}_1 (x), \ldots, \eta^{(t)}_{n-1} (x))$ of $\eta^{(t)}(x)$ onto $\Gamma'$ stays in a compact subset of $\Gamma'$. (This idea was also employed in \cite{Guan07} for the $\sigma_k$-cases; see Lemma 2.4 therein.) Once this is done, we follow the line of argument in \cite{Trudinger} and split the argument according to whether $f$ is of unbounded type or of bounded type. 
 
For some computational advantage, it is more convenient to control a related object that is asymptotically the same as $(\eta^{(t)})'$ when $\nabla_{nn} u_t$ becomes large. More precisely, let $\tilde W^t[u_t] (x)$ denote the restriction of $W^t[u_t]$ to $T_x \partial M$ 
and let $\tilde\eta^{(t)} (x)$ 
denote the eigenvalues of $\tilde W^t [u_t](x)$ with respect to the induced metric of $g$ on $\partial M$
where $\tilde \eta^{(t)}_1 \leq \cdots \leq \tilde \eta^{(t)}_{n-1}$. By Lemma \ref{lem:px}, $|\tilde \eta^{(t)}| \leq C$ on $\partial M$. By Cauchy's interlacing theorem, $\tilde \eta^{(t)}_\alpha -  \eta^{(t)}_\alpha \geq 0$ for $1 \leq \alpha \leq n-1$ and so $\tilde \eta^{(t)}\in \Gamma'$. Observe that, by \cite[Lemma 1.2]{CNS} and Lemma \ref{lem:px}, we know that if $ \eta^{(t)}_n(x)$
goes to infinity at a point $x$, then 
\begin{equation}\label{eigen-1}
 \eta^{(t)} _\alpha(x) = \tilde \eta^{(t)}_\alpha(x) + o(1), \; 1\leq \alpha \leq n-1,
\end{equation}
where the implicit bound in the little $o$-term depends only on the constant $C$ in Lemma \ref{lem:px}.

We prove:
\begin{lemma}
\label{key}
Under the hypotheses of Proposition \ref{prop}, suppose in addition that $\Gamma$ is of type 1. There exists a constant $c_0 \in (0,1)$ depending on 
$\|u_t\|_{C^1(M)}$, $\|\nabla_{\alpha\beta} u_t\|_{C^0 (\partial M)}$, $\|\nabla_{\alpha n} u_t\|_{C^0 (\partial M)}$, $(f, \Gamma)$, $(M, g)$, $\underline{u}$, $\varphi$ and $\psi$,
such that
\[
d_{\Gamma'} (\tilde \eta^{(t)}   (x)) \geq c_0 \; \mbox{on}\; \partial M \text{ for } t \in [1-c_0,1].
\]
\end{lemma}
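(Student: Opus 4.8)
\textbf{Proof plan for Lemma \ref{key}.}

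The plan is to argue by contradiction: suppose that along a sequence $t_k \nearrow 1$ there are points $x_k \in \partial M$ at which $d_{\Gamma'}(\tilde\eta^{(t_k)}(x_k)) \to 0$. After passing to a subsequence, $x_k \to x_0 \in \partial M$, and $\tilde\eta^{(t_k)}(x_k) \to \eta_0' \in \partial\Gamma'$ (using the uniform bound $|\tilde\eta^{(t)}| \leq C$ from Lemma \ref{lem:px}). The idea, borrowed from \cite[Section 6]{CNS} and \cite[Lemma 2.4]{Guan07}, is to build a barrier out of a supporting linear functional $\gamma \cdot \lambda'$ for $\Gamma'$ at (an ascending rearrangement of) $\eta_0'$, combined with the fact that on $\partial M$ the tangential part of $W^t[u_t]$ is controlled: $\tilde\eta^{(t)}$ is determined by $\varphi$, $\nabla u_t|_{\partial M}$ (itself $C^0$-bounded) and the prescribed boundary data, plus terms involving $\nabla_{\alpha\beta}u_t$ and $\nabla_{\alpha n}u_t$ on $\partial M$ which were bounded in Lemma \ref{lem:p1}. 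In particular $\tilde W^t[u_t]|_{\partial M}$ is essentially independent of $t$ up to $O(1)$, so the functional $\gamma$ supporting $\Gamma'$ nearly vanishes when evaluated on $\tilde\eta^{(t)}(x)$ for $x$ near $x_0$ and $t$ near $1$.

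The next step is to use $\gamma$ to produce a function that is a supersolution for $\mathcal L_t$ near $x_0$. Choosing the descending unit vector $\gamma \in N_{\eta_{0,**}'}(\partial\Gamma')$ (whose existence and the property $\gamma_\alpha \geq 0$, $\gamma \cdot \eta_{0,**}' = 0$ are recorded in the bullet list after Lemma \ref{Lem:GtBall}), I would consider the quantity $\Phi(x) = \sum_{\alpha,\beta < n} \gamma_{\alpha\beta}\, W^t_{\alpha\beta}[u_t](x)$ for a suitable symmetric matrix $\gamma_{\alpha\beta}$ whose eigenvalues are $\gamma$ (with $\gamma$ reordered to match the eigenframe of $\tilde W^t[u_t](x_0)$) and which is extended by $0$ in the $e_n$-directions. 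On $\partial M$ near $x_0$ this is $\gamma \cdot \tilde\eta^{(t)}$ up to rotation, hence small and nonnegative modulo a controlled error; I want to run the maximum principle to propagate smallness into the interior and then differentiate in $e_n$. For this I would estimate $\mathcal L_t(\Phi + A v + B\rho^2)$ using the concavity of $F_t$: since $F_t^{ij}$ is positive semidefinite and $\gamma_{\alpha\beta}$ (padded by zero) lies in the closed positive cone, concavity of $F_t$ together with $\gamma$ being a supporting normal for $\Gamma'$ at $\partial\Gamma'$ gives an inequality of the form $F_t^{ij}(\Phi_{ij} + \text{lower order}) \leq C(1 + \cT_t) - (\text{positive multiple of }\cT_t\text{ or of }\eta_n^{(t)}\text{-weights})$; more precisely one exploits that a normal direction to $\partial\Gamma'$ at a boundary point forces the normal direction to $\partial\Gamma_t$ to be nearly orthogonal to $e_n$, so that $F_t^{nn}$ is a definite fraction of $\cT_t$ along the relevant region. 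Then, as in the proof of Lemma \ref{lem:p1}, with the barrier $v$ from Lemma \ref{lemma} and $A \gg B \gg 1$ one gets $\mathcal L_t(\Phi + Av + B\rho^2) \leq 0$ in $\Omega_{\delta,x_0}$, boundary nonnegativity, and hence a lower bound $\nabla_n(\Phi + Av + B\rho^2)(x_0) \geq 0$; combined with $\Phi(x_0)$ small this bounds a normal derivative, which after unwinding controls $\gamma \cdot (\eta^{(t)})'(x_0)$ near $x_0$ from below by something bounded away from $0$ once $t$ is close to $1$ — contradicting $d_{\Gamma'}(\tilde\eta^{(t_k)}(x_k)) \to 0$ and \eqref{eigen-1}.

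The main obstacle I expect is the uniformity in $t$ as $t \nearrow 1$: the cones $\Gamma_t$ degenerate (they are all of type $2$, but the relevant ball in Lemma \ref{Lem:GtBall} shrinks like $(1-t)$) and $\nu_{t,\lambda}$ can behave badly, so I must make sure every constant in the barrier construction — in particular the lower bound on $F_t^{nn}/\cT_t$ coming from $\gamma$ being a near-normal to $\partial\Gamma'$ — depends only on the data and not on $t$. This is precisely where the special structure of $(f_t,\Gamma_t)$ via Lemma \ref{Lem:GtBall}, and the asymptotic identity \eqref{eigen-1} tying $\eta^{(t)}$ to $\tilde\eta^{(t)}$ with a $t$-independent error, must be invoked carefully; without such a property (as the authors remark for a general family $(\tilde f_t,\tilde\Gamma_t)$) the argument would not close. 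A secondary technical point is making the "support functional" $\gamma$ vary measurably/continuously enough to build a genuine $C^2$ barrier $\Phi$; this is handled by fixing $\gamma$ at the limit configuration $x_0$ and absorbing the $x$-dependence into the $B\rho^2$ and $Av$ terms, exactly as the $o(1)$ in \eqref{eigen-1} is absorbed.
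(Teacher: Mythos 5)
Your overall skeleton (a supporting functional $\gamma$ for $\Gamma'$, a linear combination of the tangential entries of $W^t[u_t]$, a barrier of the form $\cdot + Av + B\rho^2$ with $v$ from Lemma \ref{lemma}, and a normal-derivative bound via the maximum principle) is indeed the CNS/Guan strategy the paper follows. But as written the proposal has several genuine gaps. First, your barrier $\Phi = \sum \gamma_{\alpha\beta} W^t_{\alpha\beta}[u_t]$ is second order in $u_t$, so estimating $\mathcal{L}_t \Phi$ would require differentiating the equation and controlling third-order terms; you never address this. The paper avoids it entirely: using the Dirichlet condition it rewrites $W^t_{\alpha\beta}[\underline{u}] - W^t_{\alpha\beta}[u_t]$ on $\partial M$ as $\nabla_n(u_t-\underline{u})\big(b^t_{\alpha\beta} + \tfrac{n-2+(n-1)t}{2}\nabla_n(u_t+\underline{u})\delta_{\alpha\beta}\big) - (1-t)\nabla_{nn}(u_t-\underline{u})\delta_{\alpha\beta}$ (identity \eqref{bb}), and the interior barrier $\Psi$ is then built from the \emph{first-order} quantities $\nabla_n(u_t-\underline{u})$ and $|\nabla u_t|^2 - |\nabla\underline{u}|^2$, whose $\mathcal{L}_t$ is controlled by Lemmas \ref{Lem:bbb} and \ref{lem:LtNu2}. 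Your claimed mechanism that ``$F_t^{nn}$ is a definite fraction of $\cT_t$'' is not what makes the argument close; the good negative term comes from $Av$ via Lemma \ref{lemma}.

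Second, the maximum-principle comparison needs the relevant scalar to be $\leq 0$ on $\partial M \cap \partial\Omega_{\delta,y_0}$ and $=0$ at the base point, which forces one to work at a \emph{minimum point over $\partial M$} of a fixed scalar function, not at an arbitrary point of a contradiction sequence. Moreover that scalar must carry the correction $-(1-t)\sum_\alpha \gamma_\alpha \nabla_{nn}u_t$ (the paper's $\zeta_{t,\lambda_0',\gamma}$), precisely because the $(1-t)\,\mathrm{tr}$-part of $\tilde W^t[u_t]$ contains $\nabla_{nn}u_t$, which is only $O((1-t)^{-1})$; this correction is what makes the boundary identity \eqref{bb} usable and the estimate uniform as $t\nearrow 1$. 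Third, and most importantly, your ``unwinding'' step is exactly where the subsolution must enter: the bound $\nabla_n\Psi(y_0)\leq C$ yields $\nabla_{nn}u_t(y_0)\leq C$ only because the coefficient $p^t + q^t\nabla_n(u_t+\underline{u})$ of $\nabla_{nn}(u_t-\underline{u})$ is bounded \emph{below} by a positive constant (inequality \eqref{b0b}), and that positivity comes from $d_{\Gamma'}(\lambda(\tilde W^t[\underline{u}]))$ being uniformly positive for the strict subsolution. You never invoke $\underline{u}$ in this lemma, so your argument cannot close as stated.
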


\begin{proof}
Recall from Section \ref{Sec2} that, for $\lambda_0' \in \partial\Gamma'$, $N_{\lambda_0'}(\partial\Gamma')$ denotes the set of unit vectors $\gamma \in \RR^{n-1}$ such that $\Gamma'$ is contained in the half-space $\{\lambda' \in \RR^{n-1}: \gamma \cdot (\lambda' - \lambda_0') > 0\}$. For $\lambda_0' \in \partial\Gamma'$ and $\gamma \in N_{\lambda_0'}(\partial\Gamma')$, define
\[
\zeta_{t,\lambda_0',\gamma} := \gamma\cdot \tilde \eta^{(t)} - (1-t)  \sum_{\alpha < n} \gamma_\alpha \nabla_{nn} u_t.
\]
We will show that there exists $c_0 \in (0,1)$ depending on 
$\|u_t\|_{C^1(M)}$, $\|\nabla_{\alpha\beta} u_t\|_{C^0 (\partial M)}$, $\|\nabla_{\alpha n} u_t\|_{C^0 (\partial M)}$, $(f, \Gamma)$, $(M, g)$, $\underline{u}$, $\varphi$ and $\psi$,
such that for all ascending $\lambda_0' \in \partial\Gamma'$ and descending $\gamma \in N_{\lambda_0'}(\partial\Gamma')$,
\begin{equation}
\zeta_{t,\lambda_0',\gamma}(x) \geq 2c_0 \; \mbox{on}\; \partial M \text{ for } t \in [1-c_0,1].
	\label{Eq:T1}
\end{equation}
Once this is done, since $\nabla_{nn}u_n \geq - C$ (by Corollary \ref{Cor:X}) and $\gamma_\alpha \geq 0$, it follows that, after possibly shrinking $c_0$ slightly, for all ascending $\lambda_0' \in \partial\Gamma'$ and descending $\gamma \in N_{\lambda_0'}(\partial\Gamma')$,
\[
\gamma\cdot \tilde \eta^{(t)}(x) \geq 2c_0 + (1-t)  \sum_{\alpha < n} \gamma_\alpha \nabla_{nn} u_t \geq c_0\; \mbox{on}\; \partial M \text{ for } t \in [1-c_0,1], 
\]
which then implies the conclusion in view of \eqref{Eq:T0}.

Fix some ascending $\lambda_0' \in \partial\Gamma'$ and descending $\gamma \in N_{\lambda_0'}(\partial\Gamma')$. To lighten up the notation, we will abbreviate $\zeta_{t,\lambda_0',\gamma}$ to $\zeta_t$. Pick a point $y_0 \in \partial M$ such that $\zeta_t$ attains its minimum on $\partial M$ at $y_0$. We choose a local orthonormal frame $e_1, \ldots, e_n$ around $y_0$ as before such that
$\tilde W[u_t](y_0) = \{W_{\alpha\beta} [u_t] (y_0)\}_{1\leq \alpha, \beta \leq n-1}$ is diagonal
and
\[
W_{11}[u_t] (y_0) \leq \cdots \leq W_{(n-1)(n-1)}[u_t](y_0).
\]
Our aim is to prove $|\nabla_{nn} u_t (y_0)| \leq C$ for $t \in [1 - c_0,1]$. By Corollary \ref{Cor:X}, we only need to show $\nabla_{nn} u_t(y_0) \leq C$. Once this is done, estimate \eqref{Eq:T1} is established as follows: We know that $\eta^{(t)}(y_0) = \lambda (W^t[u_t] (y_0) )$ belongs to a compact subset $K$ of $\Gamma$ which depends only on $\|u_t\|_{C^1 (M)}$, $(f, \Gamma)$, $\psi$, the bounds established in Lemma \ref{lem:px} as well as the bound of $|\nabla_{nn} u (y_0)|$.
Since the projection $K'$ of $K$ onto $\Gamma'$ is a compact subset of $\Gamma'$, we have that 
$d_{\Gamma'} ((\eta^{(t)})'( y_0) ) ) \geq 3c_0$ for some $c_0 > 0$ depending only on $K$ and $\Gamma$.
By Cauchy's interlacing theorem, we have $q:= \tilde \eta^{(t)}  (y_0) - (\eta^{(t)})' (y_0) \geq 0$. Now if $p \in \partial \Gamma'$ is such that $d_{\Gamma'} ( \tilde\eta^{(t)}  (y_0) ) = |\tilde\eta^{(t)}  (y_0)) - p|$, then $p- q \notin \Gamma'$ and so
\begin{align*}
3c_0 \leq d_{\Gamma'} ( (\eta^{(t)} )' (y_0) ) 
&\leq  | (\eta^{(t)} )' (y_0)  - ( p - q ) | \\
&=  | \tilde \eta^{(t)}  (y_0)  - p| = 
d_{\Gamma'} ( \tilde\eta^{(t)}  (y_0) ) \stackrel{\eqref{Eq:T0}}{\leq} \gamma\cdot \tilde \eta^{(t)}(y_0).
\end{align*}
Since $|\gamma| = 1$ and $|\nabla_{nn} u_t (y_0)| \leq C$, we arrive at \eqref{Eq:T1} after possibly slightly shrinking $c_0$.

Let us prove $\nabla_{nn} u_t(y_0) \leq C$. Since the matrix $\tilde W^t[u_t](y_0) = \{W^t_{\alpha\beta} [u_t] (y_0)\}_{1\leq \alpha, \beta \leq n-1}$ is diagonal with diagonal entries $\tilde \eta^{(t)} (y_0)$, we have
\begin{equation}
	\label{L1}
  \sum_{\alpha < n} \gamma_\alpha W^t_{\alpha \alpha}[u_t] (y_0) = \gamma \cdot \tilde\eta^{(t)} (y_0) = \zeta_t(y_0) + (1-t)\sum_{\alpha < n} \gamma_\alpha \nabla_{nn} u_t(y_0),
\end{equation}
and, by \eqref{Eq:T0},
\begin{equation}
	\label{L2}
\gamma \cdot \lambda' \geq d_{\Gamma'}(\lambda') 
, \quad
\forall \text{ ascending }\lambda' \in \Gamma'.
\end{equation}
Since $\underline{u}$ is a subsolution, we have $\lambda(\tilde W^t[\underline{u}] (y_0) ) \in \Gamma'$.
By \cite[Lemma 6.2]{CNS} and since $\gamma$ is descending, we have
\[
\sum_{\alpha < n} \gamma_\alpha  W^t_{\alpha \alpha} [\underline{u}] (y_0) \geq 
\sum_{\alpha < n} \gamma_\alpha \lambda_\alpha ( \tilde W^t[\underline{u}](y_0) ).
\]
Hence, by \eqref{L1} and \eqref{L2},
we obtain
\begin{multline}
\sum_{\alpha < n} \gamma_\alpha \Big(  W^t_{\alpha\alpha}[\underline{u}] - W^t_{\alpha\alpha}[u_t] \Big) (y_0)\\
\geq d_{\Gamma'} ( \lambda(\tilde W^t[\underline{u}] (y_0) )  ) - \zeta_t(y_0) - (1-t)\sum_{\alpha < n} \gamma_\alpha \nabla_{nn} u_t(y_0).
	\label{Eq:3.4+5}
\end{multline}

Now by the boundary condition $u_t - \underline{u} = 0$ on $\partial M$, we have
\[
\nabla_{\alpha \beta} (u_t-\underline{u}) (y) = - \nabla_n (u_t - \underline{u}) (y) b_{\alpha\beta}(y) \text{ for }  1 \leq \alpha, \beta \leq n-1,
\]
where $b_{\alpha \beta} = \langle \nabla_\alpha e_\beta, e_n \rangle$. Introducing $b_{\alpha\beta}^t = tb_{\alpha\beta} + (1-t) \sum_{\ell<n} b_{\ell\ell} \delta_{\alpha\beta}$, we then have
\begin{align}
\label{bb}
W^t_{\alpha\beta}[\underline{u}] - W^t_{\alpha\beta}[u_t] 
	&= \nabla_n (u_t - \underline{u}) \Big( b^t_{\alpha\beta} + \frac{n - 2 + (n-1)t}{2} \nabla_n (u_t + \underline{u}) \delta_{\alpha\beta}\Big)\\
		&\qquad	 - (1 - t) \nabla_{nn}(u_t - \underline{u})\delta_{\alpha\beta}\nonumber
\end{align}
on $\partial M$ near $y_0$. Defining
\begin{align*}
p^t
	&= \sum_{\alpha < n} \gamma_\alpha  b^t_{\alpha\alpha},\\
q^t
	&=\frac{n - 2 + (n-1)t}{2}  \sum_{\alpha < n} \gamma_\alpha,
\end{align*}
we deduce from \eqref{Eq:3.4+5} and \eqref{bb} that
\[
p^t\nabla_n (u_t - \underline{u}) + q^t \nabla_n(u_t - \underline{u})\nabla_n(u_t + \underline{u})\Big|_{y = y_0}
	 \geq d_{\Gamma'} ( \lambda(\tilde W^t[\underline{u}] (y_0) )  ) - \zeta_t(y_0) - C(1 - t).
\]
We may assume
$\zeta_t(y_0) \leq \frac{1}{4} d_{\Gamma'} ( \lambda(\tilde W^t[\underline{u}] (y_0) )  )$;
otherwise we are done. Then, for $t > 1 - \frac{1}{C}$,
\[
p^t\nabla_n (u_t - \underline{u}) + q^t \nabla_n(u_t - \underline{u})\nabla_n(u_t + \underline{u})\Big|_{y = y_0}
	\geq \frac{1}{2}d_{\Gamma'} ( \lambda(\tilde W^t[\underline{u}] (y_0) )  ) > 0.
\]
In particular, since $\nabla_n(u_t - \underline{u})(y_0) \geq 0$, there exists a positive constant $b_0 >0$
depending only on the bound for  $\|u\|_{C^1 (M)}$, $(f, \Gamma), (M, g)$ and $\underline{u}$ 
such that
\begin{equation}\label{b0b}
		p^t  + q^t  \nabla_n(u_t + \underline{u})\Big|_{y = y_0}
		\geq b_0.
\end{equation}

We now define the following function on $\Omega_{\delta, y_0}$:
\[\begin{aligned}
\Psi   = &p^t\nabla_n (u_t - \underline{u}) + q^t(|\nabla u_t|^2 - |\nabla \underline{u}|^2) 
	-  \sum_{\alpha < n} \gamma_\alpha  W^t_{\alpha\alpha}[\underline{u}]  
	+ (1 - t) \sum_{\alpha < n} \gamma_\alpha \nabla_{nn} \underline{u} + \zeta_t(y_0).
\end{aligned}\]
By \eqref{bb} and the fact that $y_0$ is a minimum point of $\zeta_t$ on $\partial M$, we see for $y \in \partial M \cap \partial \Omega_{\delta, y_0}$ that
\[
\Psi(y) \leq -\sum_{\alpha < n} \gamma_\alpha W_{\alpha\alpha}^t[u_t](y) + \gamma \cdot \tilde \eta^{(t)}(y).
\]
This implies on one hand that $\Psi(y_0) = 0$ and on the other hand, in view of \cite[Lemma 6.2]{CNS} and the order of $\gamma$ and $\tilde \eta^{(t)}$, that $\Psi \leq 0$ on $\partial M \cap \partial \Omega_{\delta, y_0}$. Applying $\mathcal{L}_t $ to $- \Psi$ and using Lemmas \ref{Lem:bbb} and \ref{lem:LtNu2}, we get
\[
- \mathcal{L}_t \Psi \leq  C \Big( 1 + \cT_t  + \sum_i \partial_i f_t(\lambda^{(t)}) |\lambda^{(t)}_i| \Big) - \frac{1}{C}  \sum_i \partial_i f_t(\lambda^{(t)}) (\lambda^{(t)}_i)^2.
\]
Using \eqref{f-lambda} with $\varepsilon$ small enough, we see that
\[
- \mathcal{L}_t \Psi \leq C \big( 1 + \cT_t\big).
\]

Now define $h = - \Psi + B \rho^2 + A v$, where $\rho, v$ are functions as before. By Lemma \ref{lemma}, we have
\[
\mathcal{L}_t h \leq 0 \; \mbox{in}\; \Omega_{\delta, y_0} \; \mbox{and} \; h \geq 0 \; \mbox{on} \; \partial \Omega_{\delta, y_0},
\]
when $A \gg B \gg 1$ depending on $\|u_t\|_{C^1(M)}$, $(f, \Gamma)$, $(M, g)$, $\underline{u}$ and $\psi$. By the maximum principle, we therefore obtain $\nabla_n h (y_0) \geq 0$, which implies that $\nabla_n \Psi (y_0) \leq C$. 

We proceed to bound $\nabla_{nn} u_t(y_0)$ from above. If $\nabla_{nn} (u_t + \underline{u} ) (y_0) < 0$, we are done. We hence assume $\nabla_{nn} (u_t + \underline{u} ) (y_0) \geq 0$. By Lemma \ref{lem:p1}, 
\[
\nabla_n \Psi(y_0)
	\geq p^t  \nabla_{nn} (u_t - \underline{u})    + q^t \nabla_n \big((\nabla_n u_t)^2 - (\nabla _n\underline{u})^2\big)\Big|_{y = y_0} - C.
\]
Since $\nabla_n (u_t - \underline{u}) \geq 0$ on $\partial M$, this implies that
\begin{align*}
\nabla_n \Psi(y_0)
	&\geq p^t  \nabla_{nn} (u_t - \underline{u})    + q^t \nabla_n(u_t + \underline{u}) \nabla_{nn}(u_t - \underline{u})\Big|_{y = y_0} - C\\
	&= \Big(p^t   + q^t \nabla_n(u_t + \underline{u}) \Big) \nabla_{nn}(u_t - \underline{u})\Big|_{y = y_0} - C.
\end{align*}
Combining this together with \eqref{b0b} and the fact that $\nabla_n\Psi(y_0) \leq C$, we conclude that $\nabla_{nn} u_t (y_0) \leq C$.
\end{proof}

\begin{lemma}
\label{lem:p2-t1fu}
If $\Gamma$ is of type 1 (in the sense of Definition \ref{defGType}) and $f$ is of unbounded type (in the sense of Definition \ref{def}), then Lemma \ref{lem:p2} holds.
\end{lemma}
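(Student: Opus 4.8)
The plan is to combine Lemma \ref{key} with the unbounded type property of $f$, in the spirit of the treatment of Hessian equations in \cite[Section 6]{CNS} and \cite{Trudinger}. Let $c_0 \in (0,1)$ be the constant produced by Lemma \ref{key}; by Lemma \ref{lem:p1} it depends only on the data allowed in Proposition \ref{prop}. For $t \in [0,1-c_0]$ we are done at once: Lemma \ref{lem:px} gives $\nabla_{nn} u_t(x_0) \leq \frac{C}{1-t} \leq \frac{C}{c_0}$, which, together with Corollary \ref{Cor:X}, yields $|\nabla_{nn}u_t(x_0)| \leq C$. So assume $t \in [1-c_0,1]$; since Corollary \ref{Cor:X} already gives the lower bound, it suffices to bound $\nabla_{nn}u_t(x_0)$ from above, and by Lemma \ref{lem:p1}, Corollary \ref{Cor:X} and Cauchy's interlacing theorem (applied to $W^t[u_t]$ and its tangential block $\tilde W^t[u_t]$, which is bounded by Lemma \ref{lem:px}) it suffices to show that the largest eigenvalue $\eta^{(t)}_n(x_0)$ of $W^t[u_t]$ at $x_0$ is bounded above.

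First I would argue by contradiction, supposing $\eta^{(t)}_n(x_0)$ is large. The crucial input is Lemma \ref{key}: combined with the bound $|\tilde\eta^{(t)}| \leq C$ from Lemma \ref{lem:px}, it shows that $\tilde\eta^{(t)}(x_0)$ lies in the compact set $K_1 := \{v \in \Gamma' : |v| \leq C,\ d_{\Gamma'}(v) \geq c_0\} \subset \Gamma'$. Since $\eta^{(t)}_n(x_0)$ is large, the asymptotics \eqref{eigen-1} then force the projection $(\eta^{(t)})'(x_0)$ into the slightly enlarged set $K_2 := \{v : {\rm dist}(v, K_1) \leq c_0/2\}$, which is still a compact subset of $\Gamma'$ because every point of $K_1$ has distance at least $c_0$ from $\partial\Gamma'$; this step requires $\eta^{(t)}_n(x_0)$ to exceed a threshold depending only on the data, through the uniform modulus in \eqref{eigen-1}. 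Next I would invoke the unbounded type hypothesis in the form \eqref{Condition7}. As $K_2$ is compact in $\Gamma'$, there is $S_0$ with $(v, S_0) \in \Gamma$ for all $v \in K_2$, hence $(v,s) \in \Gamma$ for all $s \geq S_0$ (because $\Gamma \supset \Gamma_n$ is an open convex cone), so $\{(v, S_0) : v \in K_2\}$ is a compact subset of $\Gamma$. Letting $\Psi_0$ be a uniform upper bound for $\psi(x_0, u_t(x_0))$ (available since $\|u_t\|_{C^1(M)}$ is among the allowed dependencies), applying \eqref{Condition7} with this compact set and the constant $\Psi_0 + 1$ gives $S_1 \geq S_0$ such that $f(v,s) \geq \Psi_0 + 1$ for all $v \in K_2$ and $s \geq S_1$. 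If $\eta^{(t)}_n(x_0)$ exceeds both the previous threshold and $S_1$, then, using $\eta^{(t)}(x_0) \in \Gamma$, the monotonicity \eqref{f2}, and the fact that the segment from $((\eta^{(t)})'(x_0), S_1)$ to $\eta^{(t)}(x_0)$ lies in $\Gamma$,
\[
\Psi_0 \geq \psi(x_0, u_t(x_0)) = F_t(W[u_t](x_0)) = f\big((\eta^{(t)})'(x_0), \eta^{(t)}_n(x_0)\big) \geq f\big((\eta^{(t)})'(x_0), S_1\big) \geq \Psi_0 + 1,
\]
a contradiction. Hence $\eta^{(t)}_n(x_0) \leq C$, and tracing back the reductions gives $\nabla_{nn}u_t(x_0) \leq C$, completing the proof.

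The main obstacle — and the reason Lemma \ref{key} had to be proved first — is the uniform (in $t$ near $1$) confinement of the tangential eigenvalues $\tilde\eta^{(t)}$ to a fixed compact subset of $\Gamma'$; that is where the genuine difficulty lies. Once this is in hand, the unbounded type argument runs uniformly in $t$ precisely because $F_t(W[u_t]) = f(\lambda(W^t[u_t]))$ is expressed through the fixed function $f$ rather than $f_t$, so no degeneration occurs as $t \nearrow 1^-$; and the remaining bookkeeping — passing between bounds on $\nabla_{nn}u_t$, on the entries of $W[u_t]$ and $W^t[u_t]$, and on the eigenvalues $\lambda^{(t)}$, $\eta^{(t)}$ — is routine given Lemma \ref{lem:p1}, Corollary \ref{Cor:X} and interlacing.
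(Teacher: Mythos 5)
Your proposal is correct and follows essentially the same route as the paper: both use Lemma \ref{key} together with the eigenvalue asymptotics from \cite[Lemma 1.2]{CNS} to confine $(\eta^{(t)})'(x_0)$ to a compact subset of $\Gamma'$, then invoke \eqref{Condition7} and the monotonicity \eqref{f2} to contradict the equation when $\eta^{(t)}_n(x_0)$ is too large. Your explicit split into $t\le 1-c_0$ (handled by Lemma \ref{lem:px}) and $t\ge 1-c_0$ is a minor presentational difference, not a change of method.
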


\begin{proof} Fix a point $x_0 \in \partial M$ and choose a local orthonormal frame as before. Furthermore, we can assume under this frame that
\[
 W[u_t](x_0) =
\left( \begin{array}{cccc}
W_{11} & 0 & \cdots & W_{1n}\\
0 & W_{22} & \cdots & W_{2n}\\
\vdots & \vdots &  \ddots & \vdots\\
W_{n1} & W_{n2} & \cdots & W_{nn} \end{array} \right),
\]
i.e. $\{ W_{\alpha\beta}[u_t] (x_0) \}_{1\leq \alpha, \beta \leq n-1}$ is diagonal. Note that $\{ W^t_{\alpha\beta} (x_0) \}_{1\leq \alpha, \beta \leq n-1}$ is also diagonal. By Lemma \ref{lem:px}, we have that $|W^t_{ij}[u_t](x_0)| \leq C$ for $(i,j) \neq C$ and $|W^t_{nn}[u_t](x_0) - \nabla_{nn} u_t(x_0)| \leq C$. 
By \cite[Lemma 1.2]{CNS}, we know that if $\nabla_{nn} u_t(x_0)$
goes to infinity, then the eigenvalues $\eta^{(t)}$ of $W^t[u_t] (x_0)$ satisfy
\begin{equation}\label{eigen}
\left\{
\begin{aligned}
\eta^{(t)}_\alpha (x_0) = &\; W^t_{\alpha\alpha}[u_t] (x_0) + o(1), \; 1\leq \alpha \leq n-1,\\
\eta^{(t)}_n (x_0) = & \; W^t_{nn}[u_t] (x_0) \Big( 1 + O(\frac{1}{W^t_{nn}[u_t] (x_0)  }) \Big),
\end{aligned}
\right.
\end{equation}
where the implicit bound in the little $o$-term and big $O$-term depend only on $C$.

By Lemma \ref{key}, we have $d_{\Gamma'}(\tilde \eta^{(t)}(x_0) ) \geq c_0$. Therefore, when $\nabla_{nn} u_t (x_0)$ is large enough, by \eqref{eigen}, we have $d_{\Gamma'}((\eta^{(t)})' (x_0) ) > c_0/2$. Therefore, by Lemma \ref{lem:px}, $(\eta^{(t)})' (x_0)$ belongs to a compact subset $K'$ of $\Gamma'$ depending only $\Gamma$, on the bound in Lemma \ref{lem:px} and the constant $c_0$ above. By compactness of $K'$, there exists
$R_0 = R_0 (K',\Gamma) > 0$ such that $((\eta^{(t)})'(x_0), R_0) \in \Gamma$. By \eqref{Condition7} there is another constant $R_1 > 0$ depending only on $K', R_0, \psi$ and $\|u\|_{C^0(M)}$ such that
\[
f ((\eta^{(t)})'(x_0), R_0 + R_1) > \psi (x_0, u_t).
\] 
By \eqref{eqn} and \eqref{eigen}, this implies that $W^t_{nn}[u_t] (x_0) \leq C$ and hence $\nabla_{nn} u_t (x_0) \leq C$.
\end{proof}

It remains to consider the case $\Gamma$ is of type 1 and $f$ is of bounded type. In particular, the function $f_\infty$ in \eqref{f-infty} is a well-defined concave function in $\Gamma'$. Following \cite{Guan07, Guan14, Guan18, Trudinger}, we need to control $f_\infty ( \tilde\eta^{(t)}(x) ) - \psi (x, u_t)$ on $\partial M$. In this step, the bound for $\tilde\eta^{(t)}$ in Lemma \ref{key} is needed. We prove:

\begin{lemma}
\label{key2}
Under the hypotheses of Proposition \ref{prop}, suppose in addition that $\Gamma$ is of type 1 (in the sense of Definition \ref{defGType}) and $f$ is of bounded type (in the sense of Definition \ref{def}). There exists a constant $c_0 \in (0,1)$ depending on $\|u_t\|_{C^1(M)}$, $\|\nabla_{\alpha\beta} u_t\|_{C^0 (\partial M)}$, $\|\nabla_{\alpha n} u_t\|_{C^0 (\partial M)}$, $(f, \Gamma)$, $(M, g)$, $\underline{u}$, $\varphi$ and $\psi$,
such that
\[
m_t = \min_{x\in \partial M} \{f_\infty ( \tilde\eta^{(t)}(x) ) - \psi (x, u_t)\} \geq c_0 \text{ for } t \in [1 - c_0,1].
\]
\end{lemma}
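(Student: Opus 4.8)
The plan is to follow the scheme of Lemma \ref{key}, replacing the support vector $\gamma$ of $\Gamma'$ by a matrix subgradient of the concave function $F_\infty$, for which the machinery of Lemma \ref{Lem:Finfty} is tailor-made. By Lemmas \ref{lem:px} and \ref{key}, the eigenvalue vectors $\tilde\eta^{(t)}(x)$ ($x\in\partial M$, $t$ near $1$) lie in a fixed compact $K'\subset\Gamma'$; let $\mathcal{K}:=\{M\in\mathcal{U}:\lambda(M)\in K'\}$, a fixed compact subset of $\mathcal{U}$, and fix via Lemma \ref{Lem:Finfty}(iv) a constant $C_*$ with $\|N\|\le C_*$ for all $N\in\mathcal{N}_\infty(B)$, $B\in\mathcal{K}$. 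For each descending vector $N=(N_1\ge\dots\ge N_{n-1}\ge 0)$ such that $\mathrm{diag}(N)\in\mathcal{N}_\infty(B)$ for some $B\in\mathcal{K}$ — the set of such $N$ is compact — put
\[
\zeta_{t,N}(x):=\sum_{\alpha<n}N_\alpha\,\tilde\eta^{(t)}_\alpha(x)-\psi(x,u_t)-(1-t)\Big(\sum_{\alpha<n}N_\alpha\Big)\nabla_{nn}u_t(x),\qquad x\in\partial M,
\]
which is bounded on $\partial M$ by Lemmas \ref{lem:p1}, \ref{lem:px}, \ref{key}. I will show $\zeta_{t,N}\ge c_0$ on $\partial M$ for all such $N$ and all $t\in[1-c_0,1]$. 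Granting this, for a given $x$ take $N=N_x^*$ to be a descending diagonal subgradient of $F_\infty$ at $\mathrm{diag}(\tilde\eta^{(t)}(x))\in\mathcal{K}$, so that $\sum_\alpha (N^*_x)_\alpha\tilde\eta^{(t)}_\alpha(x)=f_\infty(\tilde\eta^{(t)}(x))$ by homogeneity; since $\nabla_{nn}u_t\ge -C$ by Corollary \ref{Cor:X},
\[
f_\infty(\tilde\eta^{(t)}(x))-\psi(x,u_t)=\zeta_{t,N^*_x}(x)+(1-t)\Big(\sum_\alpha (N^*_x)_\alpha\Big)\nabla_{nn}u_t(x)\ge c_0-C_*C(1-t),
\]
which yields $m_t\ge c_0/2$ for $t$ close to $1$, and hence the conclusion after renaming $c_0$.

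To bound $\zeta_{t,N}$ from below, fix $N$, let $y_0\in\partial M$ minimize $\zeta_{t,N}$, and choose the adapted frame at $y_0$ diagonalizing $\tilde W[u_t](y_0)$ with $W_{11}[u_t](y_0)\le\dots\le W_{(n-1)(n-1)}[u_t](y_0)$; then $\mathrm{diag}(N)\in\mathcal{N}_\infty(B_0)$ with $B_0:=\tilde W^t[u_t](y_0)$. As in Lemma \ref{key} we may assume $\zeta_{t,N}(y_0)\le\tfrac{1}{4}c_*$, where $c_*>0$ is a uniform lower bound for $f_\infty(\lambda(\tilde W^t[\underline{u}](\cdot)))-\psi(\cdot,u_t)$ on $\partial M$: indeed $f(\lambda(W^t[\underline{u}]))=f_t(\lambda(W[\underline{u}]))\ge\psi(\cdot,\underline{u})\ge\psi(\cdot,u_t)$, and since $f$ is of bounded type and $\Gamma$ of type $1$ one has $f_\infty((\lambda)')>f(\lambda)$ with a gap uniform over $\lambda=\lambda(W^t[\underline{u}](x))$, $x\in\partial M$, $t\in[0,1]$ (these range in a compact subset of $\Gamma$, on which $f(\lambda_1,\dots,\lambda_{n-1},\cdot)$ is strictly increasing by \eqref{f2}). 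Now imitate the barrier of Lemma \ref{key}: on $\Omega_{\delta,y_0}$ set
\[
\Psi:=\mathcal{P}^t\nabla_n(u_t-\underline{u})+\mathcal{Q}^t(|\nabla u_t|^2-|\nabla\underline{u}|^2)-\sum_{\alpha<n}N_\alpha W^t_{\alpha\alpha}[\underline{u}]+(1-t)\Big(\sum_{\alpha<n}N_\alpha\Big)\nabla_{nn}\underline{u}+\zeta_{t,N}(y_0),
\]
with $\mathcal{P}^t=\sum_{\alpha<n}N_\alpha b^t_{\alpha\alpha}$ and $\mathcal{Q}^t=\tfrac{n-2+(n-1)t}{2}\sum_{\alpha<n}N_\alpha$. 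Using \eqref{bb} for $u_t$ versus $\underline{u}$ and $|\nabla u_t|^2-|\nabla\underline{u}|^2=\nabla_n(u_t-\underline{u})\nabla_n(u_t+\underline{u})$ on $\partial M$, the $\mathcal{P}^t,\mathcal{Q}^t$ terms cancel and the two $(1-t)\nabla_{nn}(\cdot)$ contributions combine to $(1-t)(\sum_\alpha N_\alpha)\nabla_{nn}u_t$, so on $\partial M$ near $y_0$ one has $\Psi=-\sum_{\alpha<n}N_\alpha W^t_{\alpha\alpha}[u_t]+(1-t)(\sum_\alpha N_\alpha)\nabla_{nn}u_t+\zeta_{t,N}(y_0)$; then \cite[Lemma 6.2]{CNS} (descending $N$, ascending eigenvalues $\tilde\eta^{(t)}$), the minimality of $y_0$, and the cancellation of the $(1-t)\nabla_{nn}u_t$ terms give $\Psi\le-\psi(\cdot,u_t)<0$ on $\partial M$ near $y_0$ and $\Psi(y_0)=-\psi(y_0,u_t)$. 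The same manipulations, the subgradient inequality $\sum_\alpha N_\alpha\lambda_\alpha(\tilde W^t[\underline{u}](y_0))\ge f_\infty(\lambda(\tilde W^t[\underline{u}](y_0)))\ge\psi(y_0,u_t)+c_*$, and $\zeta_{t,N}(y_0)\le\tfrac{1}{4}c_*$ yield $(\mathcal{P}^t+\mathcal{Q}^t\nabla_n(u_t+\underline{u}))(y_0)\ge b_0>0$ for $t$ near $1$ (using $\nabla_n(u_t-\underline{u})(y_0)\ge0$ and $|\nabla u_t|\le C$), with $b_0$ depending only on the known data. Finally, Lemmas \ref{Lem:bbb}, \ref{lem:LtNu2} and \eqref{f-lambda} (with $\varepsilon$ small; $\mathcal{Q}^t$ is a positive constant bounded below, as $\sum_\alpha N_\alpha\ge F_\infty(B_0)/\lambda_{\max}(B_0)>0$) give $-\mathcal{L}_t\Psi\le C(1+\cT_t)$.

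Set $h:=-\Psi+B\rho^2+Av-\psi(y_0,u_t)$, with $\rho,v$ as in Subsection \ref{ssec22}. By Lemma \ref{lemma} and $-\mathcal{L}_t\Psi\le C(1+\cT_t)$, $\mathcal{L}_t h\le 0$ in $\Omega_{\delta,y_0}$ for $A\gg B\gg 1$, while $h(y_0)=0$ and $h\ge 0$ on $\partial\Omega_{\delta,y_0}$ (on its part in $\partial M$ we have $-\Psi\ge\psi(\cdot,u_t)$, on the rest $|\Psi|\le C$, and $\rho\equiv\delta$ there, so $B$ large suffices). The maximum principle and the Hopf lemma force $\nabla_n h(y_0)\ge0$, hence $\nabla_n\Psi(y_0)\le C$, and then, arguing exactly as at the end of Lemma \ref{key} — splitting on the sign of $\nabla_{nn}(u_t+\underline{u})(y_0)$ and using Lemma \ref{lem:p1} with $(\mathcal{P}^t+\mathcal{Q}^t\nabla_n(u_t+\underline{u}))(y_0)\ge b_0$ — we get $\nabla_{nn}u_t(y_0)\le C$. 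Consequently $\eta^{(t)}(y_0)$ lies in a compact subset of $\Gamma$; since $\eta^{(t)}_\alpha(y_0)\le\tilde\eta^{(t)}_\alpha(y_0)$, $\eta^{(t)}_n(y_0)\le C$, $(\tilde\eta^{(t)}_1(y_0),\dots,\tilde\eta^{(t)}_{n-1}(y_0))\in K'$ and $\sum_\alpha N_\alpha\tilde\eta^{(t)}_\alpha(y_0)\ge f_\infty(\tilde\eta^{(t)}(y_0))$,
\[
\zeta_{t,N}(y_0)\ge f_\infty(\tilde\eta^{(t)}(y_0))-f(\tilde\eta^{(t)}_1(y_0),\dots,\tilde\eta^{(t)}_{n-1}(y_0),C)-C_*C(1-t)\ge\kappa_0,
\]
with $\kappa_0:=\tfrac{1}{2}\inf_{\lambda'\in K'}\big(f_\infty(\lambda')-f(\lambda',C)\big)>0$ (positive because $f$ is of bounded type and $f(\lambda',\cdot)$ is strictly increasing). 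Thus $\zeta_{t,N}\ge\min\{\tfrac{1}{4}c_*,\kappa_0\}$ on $\partial M$; taking $c_0:=\tfrac{1}{2}\min\{\tfrac{1}{4}c_*,\kappa_0\}$ and shrinking it so that $[1-c_0,1]$ lies inside all the ``$t$ near $1$'' thresholds (including that of Lemma \ref{key}) finishes the proof.

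The main obstacle is the one flagged after Theorem \ref{thm}: an affine minorant of the concave $F_\infty$ at a given matrix must be based at that matrix, so the subgradient $N$ unavoidably depends on the minimum point $y_0$, and one cannot freeze a single auxiliary function on $\partial M$ as in Lemma \ref{key} — hence the parametrization by $N$ above. The delicate point is that the coefficient $\sum_\alpha N_\alpha$ of the $(1-t)\nabla_{nn}u_t$ correction inside $\zeta_{t,N}$ must be exactly the one produced by \eqref{bb}, so that the otherwise uncontrolled $(1-t)\nabla_{nn}u_t$ terms cancel identically on $\partial M$; verifying this cancellation, along with the positivity of $\mathcal{P}^t+\mathcal{Q}^t\nabla_n(u_t+\underline{u})$ at $y_0$ and the estimate $-\mathcal{L}_t\Psi\le C(1+\cT_t)$, is where essentially all the work lies.
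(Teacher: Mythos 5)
Your overall strategy coincides with the paper's: reduce the lower bound on $f_\infty(\tilde\eta^{(t)})-\psi$ to a uniform lower bound on a family of affine minorants of $F_\infty$ evaluated at $\tilde W^t[u_t]$, corrected by the term $-(1-t)\,\mathrm{tr}(N)\nabla_{nn}u_t$ so that the $(1-t)\nabla_{nn}u_t$ contributions from \eqref{bb} cancel on $\partial M$; then run a minimum-point/barrier argument for each fixed minorant, with constants uniform over the (bounded, by Lemma \ref{Lem:Finfty}(iv)) family of subgradients. Your only cosmetic deviation is to use homogeneity of $f_\infty$ to write the affine minorant $F_\infty(B)+\sum N_{\alpha\beta}(A_{\alpha\beta}-B_{\alpha\beta})$ as the linear functional $\sum N_{\alpha\beta}A_{\alpha\beta}$ and to phrase things via descending eigenvalue vectors plus \cite[Lemma 6.2]{CNS}; this changes nothing of substance. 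The reduction step, the uniform gap $c_*$ from the subsolution (via strict monotonicity of $f$ in the last slot and boundedness of type), the positivity of $\mathcal{P}^t+\mathcal{Q}^t\nabla_n(u_t+\underline u)$ at $y_0$, the estimate $-\mathcal{L}_t\Psi\le C(1+\cT_t)$, and the concluding bound via $f_\infty(\lambda')-f(\lambda',C)\ge 2\kappa_0$ on $K'$ all check out and match the paper.

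There is, however, one step that fails as written: the barrier $h=-\Psi+B\rho^2+Av-\psi(y_0,u_t(y_0))$ need not be nonnegative on $\partial M\cap\bar\Omega_{\delta,y_0}$. On that portion of the boundary you only have $-\Psi(y)\ge \psi(y,u_t(y))+[\zeta_{t,N}(y)-\zeta_{t,N}(y_0)]$ and $v=0$, so
\[
h(y)\ \ge\ \psi(y,u_t(y))-\psi(y_0,u_t(y_0))+B\rho^2\ \ge\ -C\rho+B\rho^2,
\]
which is negative for $0<\rho<C/B$; no choice of $B$ fixes this, since the deficit is first order in $\rho$ while $B\rho^2$ is second order. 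Hence the comparison principle cannot be applied to your $h$ on $\Omega_{\delta,y_0}$. The repair is exactly what the paper does with its function $\Phi$: include $+\psi(y,u_t(y))$ as a term of $\Psi$ itself (equivalently, subtract the \emph{function} $\psi(y,u_t(y))$ rather than the constant $\psi(y_0,u_t(y_0))$ in $h$). Then on $\partial M$ one gets $-\Psi(y)-\psi(y,u_t(y))\ \ge\ \zeta_{t,N}(y)-\zeta_{t,N}(y_0)\ \ge\ 0$ exactly, with equality at $y_0$, and the extra contribution $\mathcal{L}_t\,\psi(\cdot,u_t)$ is bounded by $C\big(1+\cT_t+\sum_i\partial_if_t(\lambda^{(t)})|\lambda^{(t)}_i|\big)$ (the term $\psi_z F_t^{ij}\nabla_{ij}u_t$ being controlled as in Lemma \ref{Lem:bbb}), so the estimate $-\mathcal{L}_t\Psi\le C(1+\cT_t)$ survives. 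With this correction your argument is complete and equivalent to the paper's.
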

\begin{proof}
Recall the notations $\mathcal{U}$, $F_\infty$ and $\mathcal{N}_\infty$ defined in Section \ref{Sec2}. By Lemmas \ref{lem:px} and \ref{key}, there exists compact subset $\mathcal{K}$ of $\mathcal{U}$ depending only on $\Gamma$ and the bounds in Lemmas \ref{lem:px} and \ref{key} such that $\tilde \eta^{(t)}(x) = \lambda(\tilde W^t[u_t](x)) \in \mathcal{K}$ for all $x \in \partial M$. For $B \in \mathcal{K}$ and $N \in \mathcal{N}_\infty(B)$, define
\[
\Upsilon_{t,B,N} := F_\infty(B) + \sum_{\alpha,\beta} N_{\alpha\beta}\cdot (\tilde W_{\alpha\beta}^t[u_t] - B_{\alpha\beta}) - (1-t)  \textrm{tr}(N)\nabla_{nn} u_t.
\]
We will show that there exists $c_0 \in (0,1)$ depending on 
$\|u_t\|_{C^1(M)}$, $\|\nabla_{\alpha\beta} u_t\|_{C^0 (\partial M)}$, $\|\nabla_{\alpha n} u_t\|_{C^0 (\partial M)}$, $(f, \Gamma)$, $(M, g)$, $\underline{u}$, $\varphi$ and $\psi$,
such that for all $B \in \mathcal{K}$ and $N \in \mathcal{N}_\infty(B)$,
\begin{equation}
\Upsilon_{t,B,N}(x) -\psi (x, u_t) \geq 2c_0 \; \mbox{on}\; \partial M \text{ for } t \in [1-c_0,1].
	\label{Eq:T4}
\end{equation}
Once this is done, since $\nabla_{nn} u_t \geq - C$ (by Corollary \ref{Cor:X}) and $0 \leq \mathrm{tr}(N) \leq C$ (by Lemma \ref{Lem:Finfty}(iv)), it follows that, for all $B \in \mathcal{K}$ and $N \in \mathcal{N}_\infty(B)$,
\begin{multline*}
F_\infty(B) + \sum_{\alpha,\beta} N_{\alpha\beta}\cdot (\tilde W_{\alpha\beta}^t[u_t] - B_{\alpha\beta}) - \psi (x, u_t)\\
	\geq 2c_0 + (1-t)  \textrm{tr}(N)\nabla_{nn} u_t \geq c_0 \; \mbox{on}\; \partial M \text{ for } t \in [1-c_0,1], 
\end{multline*}
which then implies the conclusion in view of \eqref{Eq:T3}.

Fix $B \in \mathcal{K}$ and $N \in \mathcal{N}_\infty(B)$. To lighten up the notation, we will abbreviate $\Upsilon_{t,B,N}$ to $\Upsilon_t$. Suppose that $\Upsilon_t - \psi(\cdot,u_t)$ attains its minimum on $\partial M$ at $y_0 \in \partial M$.
Choose a smooth local orthonormal frame
$e_1, \ldots, e_n$ around $y_0$ as before.
Our aim is to prove that $\nabla_{nn} u_t (y_0) \leq C$. Once this is achieved, the conclusion is obtained as follows: By Lemmas \ref{lem:px} and and \ref{key}, $\tilde \eta^{(t)}(\partial M)$ is contained in a compact set $K'$ of $\Gamma'$ (which depends only on $\Gamma$ and the constants in Lemmas \ref{lem:px} and \ref{key}). We may thus pick some $c_0 > 0$ depending only on $f$, $K'$, $\Gamma$ such that 
\[
\min_{\lambda' \in K'} \{ f_\infty(\lambda') - f(\lambda',C)\} \geq 3c_0 > 0.
\]
By the monotonicity property \eqref{f2} of $f$, we then have
\begin{multline*}
3c_0 
	\leq f_\infty (  \tilde\eta^{(t)}(y_0)) - f(\tilde\eta^{(t)}(y_0),C)\\
	 \stackrel{\eqref{f2}}{\leq} f_\infty (  \tilde\eta^{(t)}(y_0)) - f(\eta^{(t)}(y_0))
 	= F_\infty (\tilde W^t[u_t](y_0)) - \psi(y_0,u_t),
\end{multline*}
which in view of \eqref{Eq:T3} implies
\[
3c_0 
	\leq F_\infty(B) + \sum_{\alpha,\beta} N_{\alpha\beta}\cdot (\tilde W_{\alpha\beta}^t[u_t] (y_0) - B_{\alpha\beta}) - \psi(y_0,u_t) .
\]
Since $\nabla_{nn} u_t(y_0) \leq C$ and $\|N\|\leq C$ (see Lemma \ref{Lem:Finfty}(iv)), we deduce \eqref{Eq:T4} after possibly shrinking $c_0$.

We turn to prove $\nabla_{nn} u_t (y_0) \leq C$. By \eqref{bb} and \eqref{F}  as well as the fact that $\psi_z \leq 0$ and $u_t \geq \underline{u}$, we have
\begin{align*}
 & \nabla_n (u_t - \underline{u}) \sum_{\alpha,\beta} N_{\alpha\beta} \Big(  b^t_{\alpha\beta}
 +  \frac{n-2 + (n-1)t}{2}  \nabla_n (u_t + \underline{u}) \delta_{\alpha\beta} \Big) (y_0)\\
 	&\qquad = \sum_{\alpha,\beta}N_{\alpha\beta} \big( W^t_{\alpha\beta} [\underline{u}] (y_0) - W^t_{\alpha\beta}[u_t] (y_0) \big) + (1-t) \mathrm{tr}(N) \nabla_{nn} (u_t - \underline{u})(y_0)\\
	&\qquad = F_\infty(B) + \sum_{\alpha,\beta}N_{\alpha\beta} \big( W^t_{\alpha\beta} [\underline{u}] (y_0) - B \big)  - \Upsilon_t(y_0) - (1-t) \mathrm{tr}(N) \nabla_{nn}  \underline{u}(y_0)\\
	&\qquad \geq F_{\infty} (\tilde W^t[\underline{u}] (y_0) )- \Upsilon_t(y_0) - C(1 - t)
		 =  f_\infty(\lambda(\tilde W^t[\underline{u}]) (y_0)) - \Upsilon_t(y_0) - C(1 - t)\\
	&\qquad \geq [f_\infty(\lambda(\tilde W^t[\underline{u}]) (y_0)) - \psi(y_0,\underline{u})]- [\Upsilon_t(y_0) - \psi(y_0,u_t)] - C(1-t).
\end{align*}
Since $\underline{u}$ is a subsolution, we have that $f_\infty(\lambda(\tilde W^t[\underline{u}])) - \psi(\cdot,\underline{u})$ is positive on $\partial M$ and hence bounded from below by a positive constant, say $\underline{m} > 0$, which depends only on $(f,\Gamma)$, $(M,g)$, $\psi$ and $\underline{u}$. To proceed, note that we may assume that $\Upsilon_t(y_0) - \psi(y_0,u_t) \leq \underline{m}/4$, as otherwise we are done. Then, for $t > 1 - \frac{1}{C}$,
\[
\nabla_n (u_t - \underline{u})N_{\alpha\beta} \Big(  b^t_{\alpha\beta}
 +  \frac{n-2 + (n-1)t}{2}  \nabla_n (u_t + \underline{u}) \delta_{\alpha\beta} \Big) (y_0)
 	\geq \frac{1}{2}\underline{m}.
\]
As $\nabla_n (u_t - \underline{u}) \geq 0$, this implies that
\begin{equation}
\label{bb2x}
\sum_{\alpha,\beta}N_{\alpha\beta} \Big(  b^t_{\alpha\beta}
 +  \frac{n-2 + (n-1)t}{2}  \nabla_n (u_t + \underline{u}) \delta_{\alpha\beta} \Big) (y_0)
 	\geq \frac{1}{C}.
\end{equation}

We define the following function in $\Omega_{\delta, y_0}$:
\begin{align*}
\Phi (y) &= p_*^t\nabla_n ( u_t-  \underline{u} ) 
	+ q_*^t\big( |\nabla u_t|^2 - | \nabla \underline{u}|^2 \big)\\
	&\qquad - F(B)
		- \sum_{\alpha,\beta} N_{\alpha \beta} \big(W^t_{\alpha\beta} [\underline{u} ] (y) - B_{\alpha\beta} \big)\\
	&\qquad
+ (1-t) \mathrm{tr}(N) \nabla_{nn} \underline{u}
+ \psi (y,u_t) + \Upsilon_t(y_0) - \psi (y_0, u_t),
\end{align*}
where
\begin{align*}
p_*^t &= \sum_{\alpha,\beta} N_{\alpha\beta} 
  b^t_{\alpha \beta},\\
q_*^t &=  \frac{n-2 + (n-1)t}{2} \mathrm{tr}(N).
\end{align*}
Note that, along $\partial M \cap \partial \Omega_{\delta, y_0}$, we have by \eqref{bb} that
\begin{align*}
\Phi (y) &= - [\Upsilon_t(y) - \psi (y,u_t)] + [\Upsilon_t(y_0) - \psi (y_0, u_t)].
\end{align*}
This shows that $\Phi \leq 0 $ along $\partial M \cap \partial \Omega_{\delta, y_0}$ and $\Phi(y_0) = 0$. Moreover, by applying $\mathcal{L}_t $ to $- \Phi$ and using Lemmas \ref{Lem:bbb} and \ref{lem:LtNu2} as in the proof of Lemma \ref{key} (keeping in mind the positive semi-definiteness of $N$ from Lemma \ref{Lem:Finfty}(i)), we compute
\begin{align*}
- \mathcal{L}_t \Phi 
	&\leq - \frac{n-2 + (n-1)t }{2} \mathrm{tr}(N) \sum_i \partial_i f_t(\lambda^{(t)}) (\lambda^{(t)}_i)^2\\
		&\qquad + C\mathrm{tr}(N) \big(  1 + \cT_t  + \sum_i \partial_i f_t(\lambda^{(t)}) |\lambda^{(t)}_i| \big) 
			 + C \big( 1 + \cT_t \big).
\end{align*}
Therefore, by \eqref{f-lambda} and Lemma \ref{Lem:Finfty}(iv),
\[
-\mathcal{L}_t\Phi \leq C \big( 1 + \cT_t \big).
\]

Now define $h = - \Phi + B\rho^2 + Av$ on $\Omega_{\delta, y_0}$.
Choosing $A\gg B \gg 1$ which depend on $\|u_t\|_{C^1(M)}$, $C(\mathcal{K})$, $(f, \Gamma)$, $(M, g)$, $\underline{u}$ as well as  $\psi$, and according to Lemma \ref{lemma}, we obtain
\[
\mathcal{L}_t h \leq 0 \; \mbox{in} \; \Omega_{\delta, y_0}\; \mbox{and} \; h\geq 0 \;\mbox{on} \; \partial \Omega_{\delta, y_0}.
\]
By maximum principle, we derive that $h \geq 0$ in $\Omega_{\delta, y_0}$ and $\nabla_n h (y_0) \geq 0$. Therefore, $\nabla_n \Phi (y_0) \leq C$.

We can now proceed to bound $\nabla_{nn} u_t(y_0)$ from above as in the proof of Lemma \ref{key}. If $\nabla_{nn} (u_t + \underline{u} ) (y_0) < 0$, we are done. We hence assume $\nabla_{nn} (u_t + \underline{u} ) (y_0) \geq 0$. By Lemma \ref{lem:p1}, 
\[
\nabla_n \Phi(y_0)
	\geq p_*^t  \nabla_{nn} (u_t - \underline{u})    + q_*^t \nabla_n \big((\nabla_n u_t)^2 - (\nabla _n\underline{u})^2\big)\Big|_{y = y_0} - C.
\]
Since $\nabla_n (u_t - \underline{u}) \geq 0$ on $\partial M$, this implies that
\begin{align*}
\nabla_n \Phi(y_0)
	&\geq p_*^t  \nabla_{nn} (u_t - \underline{u})    + q_*^t \nabla_n(u_t + \underline{u}) \nabla_{nn}(u_t - \underline{u})\Big|_{y = y_0} - C\\
	&= \Big(p_*^t   + q_*^t \nabla_n(u_t + \underline{u}) \Big) \nabla_{nn}(u_t - \underline{u})\Big|_{y = y_0} - C.
\end{align*}
Combining this with \eqref{bb2x} and the fact that $\nabla_n\Phi(y_0) \leq C$, we conclude that $\nabla_{nn} u_t (y_0) \leq C$.
\end{proof}

\begin{lemma}
\label{lem:p2-t1fb}
If $\Gamma$ is of type 1 (in the sense of Definition \ref{defGType}) and $f$ is of bounded type (in the sense of Definition \ref{def}), then Lemma \ref{lem:p2} holds.
\end{lemma}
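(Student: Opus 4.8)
The plan is to reduce, exactly as in the discussion preceding Lemma~\ref{lem:p2-t1fu}, to the regime where $t$ is close to $1$: for $t \in [0,1-c_0]$ the bound $\nabla_{nn}u_t(x_0) \leq \frac{C}{1-t} \leq \frac{C}{c_0}$ from Lemma~\ref{lem:px}, together with the lower bound $\nabla_{nn}u_t(x_0) \geq -C$ from Corollary~\ref{Cor:X}, already gives $|\nabla_{nn}u_t(x_0)| \leq C$. So I would fix $x_0 \in \partial M$ and $t \in [1-c_0,1]$, where $c_0$ is the constant from Lemma~\ref{key2}, choose an adapted frame diagonalizing $\{W_{\alpha\beta}[u_t](x_0)\}_{1\le\alpha,\beta\le n-1}$, and aim to produce an upper bound $\nabla_{nn}u_t(x_0) \leq C$; the lower bound again comes from Corollary~\ref{Cor:X}.

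Next I would argue by contradiction, supposing $\nabla_{nn}u_t(x_0)$ --- equivalently, by Lemma~\ref{lem:px} and Weyl's inequality $|W^t_{nn}[u_t](x_0)-\eta^{(t)}_n(x_0)|\le C$, the top eigenvalue $\eta^{(t)}_n(x_0)$ of $W^t[u_t](x_0)$ --- is very large. The key structural input is the asymptotic behaviour recorded in \eqref{eigen-1}: as $\eta^{(t)}_n(x_0)\to\infty$, the truncated eigenvalue vector $(\eta^{(t)})'(x_0)=(\eta^{(t)}_1(x_0),\ldots,\eta^{(t)}_{n-1}(x_0))$ converges to $\tilde\eta^{(t)}(x_0)$, and by Lemmas~\ref{lem:px} and~\ref{key2} the latter sits in a fixed compact subset of $\Gamma'$ (depending only on the established constants). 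This mirrors the use of \eqref{Condition7} in the unbounded-type case, except that now $f$ is bounded along the last axis, so in place of \eqref{Condition7} I would invoke the function $f_\infty$ from \eqref{f-infty}.

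The core estimate would then be as follows: since $s\mapsto f(\lambda',s)$ is nondecreasing by \eqref{f2} with $f(\lambda',s)\nearrow f_\infty(\lambda')$, and $f_\infty$ is continuous on $\Gamma'$, Dini's theorem makes the convergence $f(\lambda',s)\to f_\infty(\lambda')$ uniform on compact subsets of $\Gamma'$; hence, once $\eta^{(t)}_n(x_0)$ is large enough,
\[
\psi(x_0,u_t)=f(\eta^{(t)}(x_0))=f\big((\eta^{(t)})'(x_0),\,\eta^{(t)}_n(x_0)\big)\ \geq\ f_\infty\big(\tilde\eta^{(t)}(x_0)\big)-\tfrac{c_0}{2},
\]
using also the uniform continuity of $f_\infty$ on a compact neighbourhood of the set trapping $(\eta^{(t)})'(x_0)$. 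But Lemma~\ref{key2} gives $f_\infty(\tilde\eta^{(t)}(x_0))\geq\psi(x_0,u_t)+m_t\geq\psi(x_0,u_t)+c_0$ for $t\in[1-c_0,1]$, a contradiction. This forces $\eta^{(t)}_n(x_0)\leq C$, whence $W^t_{nn}[u_t](x_0)\leq C$ by Weyl's inequality and Lemma~\ref{lem:px}, and finally $\nabla_{nn}u_t(x_0)\leq C$, exactly as at the end of the proof of Lemma~\ref{lem:p2-t1fu}. Combined with Corollary~\ref{Cor:X}, this yields Lemma~\ref{lem:p2} in the remaining case.

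As for the main obstacle: the substantial work has already been done in Lemma~\ref{key2} (the uniform lower bound for $m_t$ as $t\nearrow 1^-$), and modulo that the present lemma is short and parallels Lemma~\ref{lem:p2-t1fu} almost verbatim, with \eqref{Condition7} replaced by the definition of $f_\infty$. The only point that needs a small amount of care is the uniformity in $\lambda'$ of the convergence $f(\lambda',s)\to f_\infty(\lambda')$ over the compact set in which $(\eta^{(t)})'(x_0)$ is confined; but because these functions are monotone in $s$ and $f_\infty$ is continuous, this is immediate from Dini's theorem, and it does not interact with $t$ at all since neither $f$ nor $f_\infty$ depends on $t$.
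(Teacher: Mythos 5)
Your argument is correct and takes essentially the same route as the paper: both use Lemma \ref{key2} to get the uniform gap $f_\infty(\tilde\eta^{(t)})-\psi\geq c_0$ and then, via the monotone (Dini-type) convergence $f(\lambda',s)\nearrow f_\infty(\lambda')$ uniformly on the compact subset of $\Gamma'$ trapping $(\eta^{(t)})'(x_0)$ together with \eqref{eigen}, contradict the equation $f(\eta^{(t)}(x_0))=\psi(x_0,u_t)$ when $\eta^{(t)}_n(x_0)$ is large. One minor correction: the confinement of $\tilde\eta^{(t)}(x_0)$ and $(\eta^{(t)})'(x_0)$ to a compact subset of $\Gamma'$ follows from Lemma \ref{key} (the lower bound on $d_{\Gamma'}(\tilde\eta^{(t)})$) combined with Lemma \ref{lem:px}, not from Lemma \ref{key2} as you cite.
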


\begin{proof} We fix $x_0 \in \partial M$ and set up as in the proof of Lemma \ref{lem:p2-t1fu}. We knew that, when $\nabla_{nn} u_t(x_0)$ is sufficiently large,
$\tilde\eta^{(t)}(x_0)$ and $(\eta^{(t)})' (x_0)$ belong to a compact subset $K'$ of $\Gamma'$.

By Lemma \ref{key2}
\begin{equation}
\label{low}
f_\infty (\tilde \eta^{(t)} )  - \psi (x , u_t) \geq c_0 > 0 \text{ on } \partial M.
\end{equation}
Hence, there exists $R_1 = R_1(f,K',\Gamma)$ and $\delta_1 = \delta_1(f,K',\Gamma) > 0$ such that
\[
f(\tilde \eta^{(t)}, R_1) - \psi (x , u_t) \geq \frac{1}{2}c_0. 
\]
and
\[
f(\lambda', R_1) - \psi (x , u_t) \geq \frac{1}{4}c_0 \text{ for all } |\lambda' - \tilde\eta^{(t)}| \leq \delta_1.
\]

Now, by \eqref{eigen}, if $\nabla_{nn} u_t(x_0)$ is too large, we then have $|\tilde\eta^{(t)} - (\eta^{(t)})'|(x_0) \leq \delta_1$ and $\eta^{(t)}_n(x_0) \geq R_1$ which then leads to
\[
f_t(W[u_t](x_0)) - \psi (x , u_t) = f(\eta^{(t)}(x_0)) - \psi (x , u_t) \geq \frac{1}{4}c_0 > 0,
\]
which is a contradiction. Thus $\nabla_{nn} u_t(x_0) \leq C$.
\end{proof}

\begin{proof}[Proof of Lemma \ref{lem:p2}]
The result is a combination of Lemmas \ref{lem:p2-t2}, \ref{lem:p2-t1fu} and \ref{lem:p2-t1fb}.
\end{proof}

\section{Existence of non-smooth solutions}\label{Sec:ENS}

\begin{proof}[Proof of Example \ref{Ex1}]
	
	The Schouten tensor of $g$ is 
	\begin{align*}
		A_g 
		&=  -  \frac{1}{2 }   dt^2    +  \frac{1}{2 } h.
	\end{align*} 
	We look for a solution to \eqref{Eq:1.3Cyl} of the form $u = u(t)$. We have
	\begin{align*}
		W[u]
		&= \big(\ddot u + \frac{1}{2}(1 -  \dot u^2)\big) dt^2	 
		+  \frac{1}{2} (1 - \dot u^2)  h,
	\end{align*}
	where a dot is used to denote differentiation with respect to $t$. Problem \ref{Eq:1.3Cyl} thus becomes 
	\begin{equation}
		\begin{cases}
			(1 - \dot u^2)^{k-1}\Big(\ddot u + \frac{n - 2k}{2k}(1 - \dot u^2)\Big) = \frac{n}{2k} e^{-2ku} &\text{ in } (-\ell, \ell),\\
			1 - \dot u^2 > 0 &\text{ in } (-\ell, \ell),\\
			u(\pm \ell) = c.
		\end{cases}
		\label{Eq:OBVP}
	\end{equation}

	The ODE on the first line of \eqref{Eq:OBVP} has a first integral: If we define $H$ by
	\[
	H(x, y) = e^{(2k-n)x} (1 - y^2)^k -  e^{-nx},
	\]
	then $H(u, \dot u)$ is constant along a solution. 
	
	For any $d \in (-\infty, 0)$, let $u_d$ denote the unique classical solution to the initial value problem
	\begin{equation}
		\begin{cases}
			(1 - \dot u_d^2)^{k-1}\Big(\ddot u_d + \frac{n - 2k}{2k}(1 - \dot u_d^2)\Big) = \frac{n}{2k} e^{-2ku}  ,\\
			u_d(0) = d, \dot u_d(0) = 0
		\end{cases}
		\label{Eq:OIVP}
	\end{equation}
	in its maximal interval of existence $(-T_d, T_d)$. It is routine to show that $1 - \dot u_d^2 > 0$ in $(-T_d, T_d)$, $H(u_d, \dot u_d) = H(d,0) < 0$, $T_d$ is finite and given by
	\[
	T_d = \int_d^{-\frac{1}{n} \ln |H(d,0)|} \Big[1 - e^{\frac{n-2k}{k}x} \Big( e^{-nx} +  H(d,0)\Big)^{\frac{1}{k}}\Big]^{-\frac{1}{2}}\,dx < 0,
	\]
	and 
	\[
	\begin{cases}
		u_d(t) \rightarrow -\frac{1}{n} \ln |H(d,0)|,\\
		\dot u_d(t) \rightarrow \pm 1,\\
		\ddot u_d(t) \rightarrow \infty
	\end{cases}
	\text{ as }t \rightarrow \pm T_d.
	\]
	
	Now, for any given $c \in \RR$, since the function $x \mapsto H(x,0)$ increases from $-\infty$ to $0$ as $x$ increases from $-\infty$ to $0$, we can find a unique $d_c$ such that $-\frac{1}{n} \ln |H(d_c,0)| = c$. The conclusion follows with $\ell = T_{d_c}$ and $u = u_{d_c}$.
\end{proof}

\end{document}